\newcommand{\ds}{\displaystyle}
\newcommand{\reals}{\mathbb{R}}
\newcommand{\realstwo}{\mathbb{R}^2}
\newcommand{\Dx}{{\partial_x}}
\newcommand{\cA}{{\mathscr{A}}}
\newcommand{\Dz}{{\partial_z}}
\newcommand{\cD}{\mathscr{D}}
\newcommand{\bA}{\mathbb{A}}
\newcommand{\bP}{\mathbb{P}}
\newcommand{\bX}{\mathbb{X}}
\newcommand{\cF}{\mathscr{F}}
\newcommand{\R}{\mathbb{R}}
\newcommand{\Om}{{\Omega}}
\newcommand{\la}{{\lambda}}
\theoremstyle{plain}
\newtheorem{theorem}{Theorem}[section]
\newtheorem{lemma}[theorem]{Lemma}
\newtheorem{proposition}[theorem]{Proposition}
\newtheorem{corollary}[theorem]{Corollary}
\theoremstyle{remark}
\newtheorem{remark}{Remark}[section]
\numberwithin{equation}{section}
\numberwithin{theorem}{section}
\numberwithin{remark}{section}
\numberwithin{assumption}{section}
\numberwithin{condition}{section}
\begin{document}

\title{Large Deflections of A Flow-Driven Cantilever  \\  with Kutta-Joukowski Flow Conditions}
 \author{\small \begin{tabular}[t]{c@{\extracolsep{1em}} c@{\extracolsep{1em}}c@{\extracolsep{1em}}c}
         Maria Deliyianni & Irena Lasiecka & Justin T. Webster \\
 \it University of Arizona & \it Univ. Memphis & \it UMBC \\
 \it Tucson, AZ & \it Memphis, TN & \it Baltimore, MD\\ 
mdeliy1@math.arizona.edu ~~  &  lasiecka@memphis.edu &~~ websterj@umbc.edu
\end{tabular}}

\maketitle

\begin{abstract} \noindent We consider a canonical flow-structure system modeling airflow over a cantilevered beam.  Flow-beam interactions arise in flight systems as well as alternative energy technologies, such as piezoelectric energy harvesters. A  potential flow, given through a hyperbolic equation, captures the airflow interacting with a beam clamped on one end and free on the other. The dynamic coupling occurs through an impermeability condition across the beam; in the wake the Kutta-Joukowsky flow condition is imposed. Several challenges arise in the analysis, including the unboundedness of the flow domain, lack of interface trace regularity, and flow conditions which give rise to a dynamic and mixed boundary value problem. Additionally, we consider a recent nonlinear model capturing the cantilever large deflections through the effects of inextensibility. We produce a viable underlying semigroup for the model's linearization, which includes a flow regularity theory. Then, exploiting higher regularity nonlinear estimates for the beam, we utilize a semigroup perturbation  to obtain local-in-time strong solutions for the nonlinear dynamics.   
\noindent 
\vskip.15cm

\noindent {\em Key Terms}: mathematical aeroelasticity, cantilever,  flutter, semigroups, mixed boundary conditions, dynamic boundary conditions
\vskip.15cm
\noindent {\em 2010 AMS}: 74F10, 74K10,  35L51, 35M33, 35D30 
\vskip.15cm
\noindent {\em Funding}: The first author was supported by the Croatian Science Foundation under  project  IP-2022-10-2962 while in residence at the University of Zagreb in summer 2025. The second author was partially supported by NSF-DMS 2205508. The third author was partially supported by NSF-DMS 2307538.
\end{abstract}

\section{Introduction} 
{\em Flow-structure interactions}, such as flow-plate and flow-beam systems, have been a topic of scientific interest over the past 80 years. From an engineering point of view this can be traced back to the Cold War era, with classical partial differential equation (PDE) models appearing in the context of projectiles and aircraft components \cite{bolotin,dowellnon}. Other engineering applications include biological systems \cite{huang} and, more recently, alternative energy technologies \cite{energyharvesting,piezosurvey,DOWELL}. Flow-structure interactions typically include a gas flow model (e.g., a potential or inviscid flow) in a higher-dimensional domain, with an elastic body (a plate or beam) occupying a portion of the lower dimensional boundary \cite{recentabhi}. Among this class of models, we find airfoils, projectile paneling, bridges, and axial flow cantilevers---see \cite{dowellnon,bal0,book,survey1} and references therein for more discussion. 

Mathematically, the rigorous theory of PDE solutions and associated properties for flow-structure interactions has been less often analyzed. Older works include integral equation approaches, for instance those described by Tricomi \cite{tricomi}.  These were later revived by Balakrishnan and his coauthors \cite{balshub,shubov,shubov*,ambal}  in an attempt to provide a more comprehensive mathematical theory of aeroelasticity \cite{bal0} (and many references therein). In the present work we take the point of view of Chueshov and his coauthors, who reignited the mathematical interest in coupled flow-structure systems in the 1990s \cite{LBC96,b-c-1}; many of those earlier results have been nicely collected in \cite{springer}. Utilizing semigroup and microlocal analysis techniques, some breakthroughs in well-posedness and regularity for such coupled hyperbolic systems occurred in the 2010s \cite{webster,igorirena,supersonic}. The  more recent works \cite{survey1,book} provide summaries and discussions of recent flow-structure results. 

The interest in flow-structure interactions centers about particular types of instabilities  in aerodynamics. In particular, the {\em flutter phenomenon} is of note, whereby an elastic structure is destabilized by the presence of an ambient airflow of sufficient velocity \cite{book,survey1,dowellnon,bolotin,htw,dowell1}. In this case, the onset of instability is associated with the linear aspects of the model, and the resulting post flutter displacements can only be ``tamed" in the model by the presence of a nonlinear elastic restoring force. In nearly all of the aforementioned mathematical works, the structure in question (beam, plate, or shell) is restricted on all edges---for instance, with clamped or hinged type boundary conditions. However, there are  many physical situations which elicit so called free boundary conditions, where some edges of the beam or plate are unrestricted. In such cases, the flow conditions near the free edge and in the wake are more involved than when the structure is fixed \cite{bal0,balshub,KJ,huang,ambal}. A prominently invoked condition is known as the Kutta-Joukowsky condition (KJC) \cite{K1,K2}. 

We phrase the KJC here \cite{bal0} as taking ``a zero pressure jump off the [structure] and at the trailing edge". This condition has been implemented in aeroelasticity as a mechanism for the  removal of a singularity at a distinguished point in unsteady flow \cite{K1,K2}. From an engineering standpoint the KJC gives results in correspondence with experiment (e.g., in studies of high Reynolds flow models \cite{KJ} and references therein).  In line with the analysis of Balakrishnan and Shubov, described in more detail below, we take this to correspond to a vanishing {\em acceleration potential} in the wake of the plane/axis of the structure. While this  condition can be considered somewhat artificial, providing uniqueness in classical  approaches for Laplace's equation, it has been  rigorously studied and implemented \cite{ambal,K1}.
 In the work at hand, we will consider a {\em clamped-free beam} with the KJC in force off of the structure and an impermeability condition (velocity matching) on the surface of the beam. We consider a physical beam model pertinent to cantilever flutter incorporating nonlinear stiffness effects associated to an {\em inextensibility constraint} \cite{inext1,maria1,maria2}. 

The presence of the free beam boundary condition, in conjunction with the KJC enforced off of the beam, leads to mathematical challenges for recovering integrability at infinity and global regularity for flow solutions. In particular, ``lifting" from the flow boundary to the interior---whether in the time domain or Fourier domain---is particularly challenging. While these issues were  present in the former work of the authors \cite{KJ}, the current work provides a clear and full characterization of flow regularity in 2D, connecting the flow problem with the appropriate mixed elliptic boundary value problem and associated anisotropic spatial regularity. We focus on PDE analysis here and connect to integral equations via the Fourier domain, invoking semigroup theory to exploit the cancellation of low-regularity terms.  With a linear theory in hand, the semigroup allows us to set up a fixed point argument for the nonlinear system. In particular, we invoke a dissipation regularization within the perturbation argument to yield a fixed point for smooth solutions. Then, using the recent regularity analysis of the nonlinear inextensible cantilever \cite{maria2}, the regularization term can be made to vanish, after the construction, with a good global a priori estimate for the inextensible beam. This results in local strong solutions for the fully nonlinear model without any damping or regularization present.

\subsection{Axial Flow-Beam Model}

We consider an inviscid, irrotational, and compressible flow. Let $\phi(x,z,t)$ represent the flow's velocity potential  defined on $\mathbb R^2_+ \equiv \{ z >0\}$. To obtain the flow model, we  linearize Euler's equation in 2D about the state $\mathbf U = \langle U,0\rangle$, where $U$ represents the flow velocity and $0 \le |U| <1$---see \cite{book} for more details and \cite{recentabhi} for a related model. The flow is over-body for an elastic beam residing at equilibrium on $x \in (0,L) \subset \partial \mathbb R^2_+$. We consider $x=0$  (the clamped part of the beam) to be the {\em leading edge} of the cantilever, while $x=L$ is free and will model the cantilever's {\em trailing edge}. 
The boundary condition on the surface of the  beam is given through an {\em impermeability condition}, which effectively encodes an Eulerian-to-Lagrangian variable change in the linearization of the interface \cite{bolotin,dowell1,book}. In this context, we obtain a dynamic Neumann-type condition representing the coupling with the flow. Downstream from the trailing edge and extending in the wake, as well as upstream over the inactive portion of the boundary $x \in (-\infty,0)$, we invoke the KJC \cite{bal0,KJ}. This translates to a dynamic, mixed-type \cite{savare} boundary condition where we specify that the pressure  $[\phi_t+U\phi_x]\big|_{z=0}$ vanishes for $x \notin (0,L)$. We let $w(x,t)$ be the transverse deflection of the flexible part of the boundary $(0,L)$, whose dynamics will be governed by an inertial beam equation; the coupling to the flow occurs through a dynamic pressure, which acts as a distributed force on the surface of beam $(0,L)$. 
Thus we have:
\begin{equation}\label{fullpot}
\begin{cases}
\begin{cases}
(\partial_t + U\partial_x)^2\phi=\Delta_{x,z} \phi & \text{ in } \mathbb{R}_+^2\times (0,T)\\
\partial_z\phi =  (\partial_t+U\partial_x)w & \text { on } (0,L)\\
(\phi_t+U\phi_x) = 0  &\text{ on }  (-\infty,0) \cup (L,\infty)\\
\phi(0)=\phi_0,~~  \phi_t(0)=\phi_1
\end{cases}\\[.8cm]
\begin{cases} \mathbf M(w_{tt})+ \mathbf S(w)=p_0(x)+r_{(0,L)}\gamma_0[(\partial_t+U\partial_x)\phi]& \text{ in } (0,L) \times (0,T) \\
w(x=0)=w_x(x=0) = 0 &  (0,T)\\
w_{xx}(x=L)=0;~~BC_{\mathbf M,\mathbf S}(w(x=L))=0 & (0,T) \\
w(0)=w_0,~~  w_t(0)=w_1 \end{cases}
\end{cases}
\end{equation}
The operator $r_{(0,L)}$ represents the restriction to the elastic domain from $\mathbb R = \{z=0\}$, and $\gamma_0[\cdot]$ is the standard trace operator acting as a restriction on appropriately smooth functions. 
Above, $\mathbf S$ represents the elasticity operator of choice, while $\mathbf M$ provides the associated elastic {\em inertial} operator in the problem. There are several viable options for the latter, noting the references \cite{maria1,htw,inext2,lagleug}; the boundary conditions at the beam's free edge reflect the elasticity and inertial operators. 

\subsection{Large Deflection Cantilever Beams}
We describe both extensible and inextensible beam nonlinearities in this section, since both of these can be accommodated through our theory after the linear system theory is established. In our analysis, {\em we will focus on the more recent quasilinear dynamics arising from inextensibility} \cite{maria1,inext1}. 

We recall that an {\em extensible} beam model is one where the dominant nonlinear effects are those of beam stretching due to its bending. In-axis displacement is prevented at the trailing edge,  inducing stretching with transverse movement---for more details, see \cite{htw}. 
 \begin{equation}\label{Bergerplate} (1-\alpha \partial_x^2)w_{tt}+D\partial_x^4 w +k_0 w_t-b_2||w_x||^2w_{xx} = p(x,t)  \text{ in } (0,L) \times (0,T) 
\end{equation}
Above, $D>0$ is the standard elastic stiffness parameter, $\alpha \ge 0$ is a rotary inertia parameter, and $b_2>0$ is a  parameter measuring the strength of nonlinear effects. The damping parameter is given by $k_0 \ge 0$, while $p(x,t)$ represents the dynamic pressure given from the flow, which in practice will be composed of the static pressure $p_0(x)$ and the dynamic flow-pressure $r_{(0,L)}\gamma_0[\phi_t+U\phi_x],$ as written above in \eqref{fullpot}.
The  model in \eqref{Bergerplate} is a simplification of the model in \cite{lagleug}. 

The recent class of models of interest here are the {\em inextensible} models \cite{maria1,maria2}. These  are particularly suited to cantilevered beams \cite{inext1,inext2}, and hence of primary interest here. In this scenario we enforce that the arc length of the deflecting beam is locally constant; this constraint forces any out-of-place displacement of the tip to result in an in-plane displacement. Then, the dominant nonlinearities manifest as both stiffness and inertial terms. In this analysis, one obtains an  {\em inextensibility condition},  written  as 
$(1+u_x)^2+w_x^2=1.$
Linearizing this relation, and imposing it as a constraint, we obtain a nonlinear dynamics with a good variational structure. In the present work we  neglect nonlinear inertial terms resulting from the above procedure, and we recover standard clamped-free boundary conditions for $w$ in the derivation. Thus we obtain, for the beam PDE in the coupled system,
\begin{equation}\label{dowellnon}
w_{tt}-D\partial_x\big[(w_{xx})^2w_x \big]+D\partial_{xx}\big[w_{xx}\big(1+(w_x)^2\big)\big]=p(x,t)~~ \text{ in } (0,L) \times (0,T) \end{equation}
For the above, we will heavily invoke the results in \cite[Section 4]{maria2} which focuses on a priori estimates and a construction of smooth, i.e., $H^4(0,L) \times H^2(0,L)$ type, solutions for \eqref{dowellnon}.

\subsection{Statement of Main Result and Discussion of Previous Work}

We present two main results in this manuscript, one concerning semigroup generation for the appropriately defined linear model \eqref{flowplate0}, and a second result which leverages the first to produce local-in-time strong solutions for \eqref{fullpot}, with the nonlinearity presented in \eqref{dowellnon}. Specifically, we adopt a regularization inside of a perturbation approach. The final step is passing to zero in the regularization parameter and critically uses the estimates on strong solutions (and differences of solutions) found in \cite{maria2} without any regularization. We refer to \cite{pazy} for the standard definitions of strong and generalized (semigroup) solutions associated to Cauchy problems. 

We now restate the full system of interest here, taking $0 \le |U|<1$:
\begin{equation}\label{fullpot*}
\begin{cases}
(\partial_t + U\partial_x)^2\phi=\Delta_{x,z} \phi & \text{ in } \mathbb{R}_+^2\times (0,T)\\
\partial_z\phi =  (\partial_t+U\partial_x)w & \text { on } (0,L)\\
(\phi_t+U\phi_x) = 0  &\text{ on } (-\infty,0) \cup (L,\infty)\\
\phi(0)=\phi_0,~~  \phi_t(0)=\phi_1
\\
 w_{tt}+D \partial^4_x w + \beta D\big[\partial_{x}^2[w_x^2 w_{xx}]-\partial_x[w_{xx}^2w_x ]\big]=r_{(0,L)}\gamma_0[\phi_t+U\phi_x] & \text{ in } (0,L) \times (0,T)\\
w=w_x = 0 & \text { in }\{0\}\times (0,T)\\
w_{xx}=0;~~w_{xxx}=0 & \text{ in }\{L\} \times (0,T)\\
w(0)=w_0,~~  w_t(0)=w_1 \end{cases}
\end{equation}

We begin with our supporting theorem concerning  an underlying linear semigroup.
\begin{theorem}\label{th:main1} 
Consider the linearized version of the coupled flow-beam system obtained from \eqref{fullpot*} by setting $\beta=0$. Then the associated  Cauchy problem defines a linear operator $\widehat{\mathbb A}$ on an associated energy space $Y$ (defined in Section \ref{Section2}) that generates a $C_0$-semigroup. If  $(\phi_0, \phi_1;w_0,w_1) \equiv y_0$ have $y_0 \in \mathscr D(\widehat{\mathbb A})$, the associated solution $(\phi(t),\phi_t(t);w(t),w_t(t)) \equiv y(t)$ is strong (defined in Section \ref{solutions}), and global in the sense of $y \in C([0,T];\mathscr D(\widehat{\mathbb A})) \cap C^1(0,T;Y)$ for any $T>0$.
\end{theorem}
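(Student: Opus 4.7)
The plan is to cast \eqref{fullpot*} with $\beta=0$ as a first-order abstract Cauchy problem $y_t = \widehat{\mathbb A} y$ on the energy space $Y$, and then to invoke the Lumer-Phillips theorem (plus a renorming/shift if needed) to obtain the $C_0$-semigroup. The natural state is $y = (\phi,\phi_t;w,w_t)$, with $\phi$ in an appropriate $\dot H^1(\mathbb R_+^2)$-type space, $\phi_t \in L^2(\mathbb R_+^2)$, $w$ in the clamped-free energy space $H^2_*(0,L) = \{w \in H^2: w(0)=w_x(0)=0\}$, and $w_t \in L^2(0,L)$. The subsonic assumption $|U|<1$ is essential here: it yields an equivalent energy norm that is positive-definite after absorbing the cross term $2U\langle \phi_t,\phi_x\rangle$ into the compressible energy, which lets us treat all computations in an inner-product form.

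First I would carefully encode the mixed boundary conditions into $\mathscr D(\widehat{\mathbb A})$. The key observation is that the natural object on the wake and upstream line is the \emph{acceleration potential} $\psi = \phi_t + U\phi_x$: the KJC becomes the static Dirichlet condition $\gamma_0\psi = 0$ on $(-\infty,0)\cup(L,\infty)$, while on the active portion $(0,L)$ the impermeability condition reads $\partial_z \phi = \gamma_0 w_t + U w_x$ (after using the kinematic coupling and a boundary integration by parts). Elements of the generator's domain must satisfy these mixed conditions together with the obvious regularity requirements on $w$. Dissipativity (up to a shift) then follows from a direct energy identity: multiply the flow equation by $\psi$ and integrate over $\mathbb R_+^2$, multiply the beam equation by $w_t$, and add. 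The flow-beam coupling term $\int_0^L \gamma_0\psi\,w_t \,dx$ produced by the coupling and the Neumann lift cancels with the boundary contribution arising from integration by parts on the flow side, precisely because $\gamma_0 \psi = 0$ off $(0,L)$ kills the contributions along $(-\infty,0)\cup(L,\infty)$.

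The real work, and the main obstacle, is the maximality/range condition $(\lambda I - \widehat{\mathbb A})y = F$ for some (or all) $\lambda>0$. Eliminating the time derivatives reduces this to a coupled stationary problem: a mixed elliptic boundary value problem on $\mathbb R_+^2$ of the form $(\lambda + U\partial_x)^2\phi - \Delta\phi = f$ with the mixed condition $\partial_z\phi = g$ on $(0,L)$ and $\gamma_0[(\lambda+U\partial_x)\phi] = 0$ elsewhere, coupled to a fourth-order ODE boundary value problem for $w$ with clamped-free conditions. Because this is a mixed Dirichlet/Neumann problem on a half-plane with transition points at $x=0$ and $x=L$, one cannot expect global $H^2$ regularity for $\phi$: the best one gets is anisotropic Sobolev regularity of Shamir/Savar\'e type near the junctions. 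I would work through the Fourier transform in $x$, producing an integral representation that agrees with the one used in \cite{KJ,bal0,ambal}, extract the $H^1$ lift in the $z$-variable, and then verify that the trace $\gamma_0\phi$ has exactly the anisotropic regularity needed to place the resolvent inside the domain. Positivity of $\lambda + \text{(shift)}$ combined with a coercivity estimate (subsonic assumption $|U|<1$) closes the Lax-Milgram type argument for the variational form.

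With dissipativity and range in hand, Lumer-Phillips produces the $C_0$-semigroup $e^{t\widehat{\mathbb A}}$ on $Y$. For $y_0 \in \mathscr D(\widehat{\mathbb A})$, standard semigroup theory \cite{pazy} yields $y \in C([0,T];\mathscr D(\widehat{\mathbb A}))\cap C^1([0,T];Y)$, and since the problem is linear there is no obstruction to extending in $T$, giving the claimed global-in-time strong solution.
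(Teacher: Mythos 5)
There is a genuine gap at the heart of your argument: the claimed cancellation in the dissipativity step does not hold for the full coupled system, and this is precisely the central difficulty the paper is built to address.

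When you test the flow equation by $\psi = \phi_t + U\phi_x$ and apply Green's identity, the Neumann boundary term produced is $\int_0^L \gamma_0[\psi]\,(\partial_z\phi)\,dx = \int_0^L \gamma_0[\psi]\,(w_t + U w_x)\,dx$, using the impermeability condition. The beam equation tested by $w_t$ produces $\int_0^L \gamma_0[\psi]\,w_t\,dx$ from the aerodynamic forcing. The $w_t$ parts cancel, as you say, and the KJC kills contributions off $(0,L)$. But you are left with the residual term $U\int_0^L \gamma_0[\psi]\,w_x\,dx$, and this is \emph{not} sign-definite: it is exactly the ``perturbation'' term in the paper's energy balance \eqref{energyrelation}. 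Moreover it cannot be absorbed by a bounded shift, because for a generic state $y \in Y$ the pairing $\langle w_x, \gamma_0[\psi]\rangle$ is not even well-defined: $\psi$ is only in $L^2(\mathbb R^2_+)$ on the energy space, so $\gamma_0[\psi]$ has no a priori Sobolev regularity. The full operator $\widehat{\mathbb A}$ is therefore neither dissipative nor dissipative-after-renorming in any direct sense, and Lumer--Phillips cannot be applied to it as you propose.

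This is why the paper's construction is necessarily two-stage. It splits the impermeability condition as $\partial_z\phi = w_t + \sigma U w_x$: with $\sigma=0$ the resulting operator $\mathbb{A}$ is genuinely conservative ($(\mathbb{A}y,y)_Y=0$), and Lumer--Phillips applies after the (already delicate) maximality argument of Lemma~\ref{Lemma2.4}. The remaining $U w_x$ contribution becomes an \emph{unbounded perturbation} $\mathbb{P}$, acting with values in the extrapolated space $[\mathscr D(\mathbb A)]'$. Generation of the full semigroup then requires (i) the hidden flow trace regularity of Lemma~\ref{le:FTR0} — a nontrivial microlocal result from \cite{KJ} tied to inversion of the finite Hilbert transform — which places $r_{(0,L)}\gamma_0[\psi]$ in $L^2(0,T;H^{-1/2-\epsilon})$ so the pairing against $w_x\in H^1_*$ makes sense along trajectories, and (ii) an abstract-boundary-control/variation-of-parameters fixed-point argument in $C([0,T];Y)$ to recover a semigroup from $\mathbb{A}+\mathbb{P}$. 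None of this is a technicality; without the trace lemma the perturbation term is simply undefined. Your maximality sketch (mixed Zaremba problem, Lax--Milgram, subsonic coercivity) is in the right spirit for the piece $\mathbb{A}$ and is broadly consistent with the paper's resolvent analysis, including the anisotropic-regularity caveats, but it cannot on its own yield generation for the full $\widehat{\mathbb A}$, because the perturbation $\mathbb{P}$ is not subordinate to $\mathbb{A}$ in the sense needed for standard perturbation theorems.
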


With the notions of $y$ and $\mathscr D(\widehat{\mathbb A})$ in Section \ref{Section2}, we can then state our main result here for the nonlinear system \eqref{fullpot*}.
\begin{theorem}\label{th:main2}
Consider the nonlinear system \eqref{fullpot*} with $\beta=1$. If the initial data are smooth, i.e., $y_0 \in \mathscr D(\widehat{\mathbb A})$, then there exists a unique local strong solution. The strong solution $y(t)$ satisfies the energy identity given  in \eqref{energyrelation} and remains bounded in the finite energy sense  for any $T>0$, i.e., $y \in C([0,T];Y)$. 
\end{theorem}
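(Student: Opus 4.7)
The plan is to treat the nonlinear system \eqref{fullpot*} as a semilinear perturbation of the linear dynamics generated by $\widehat{\mathbb A}$ from Theorem \ref{th:main1}. Isolating the cantilever nonlinearity as $\mathcal N(w) := -D\big[\partial_x^2(w_x^2 w_{xx}) - \partial_x(w_{xx}^2 w_x)\big]$ and lifting it to the energy space $Y$ by acting only in the $w_t$-component, we write the abstract Cauchy problem as $y_t = \widehat{\mathbb A}\,y + \mathcal F(y)$ and seek strong solutions via the Duhamel formula
\[
y(t) = e^{t\widehat{\mathbb A}} y_0 + \int_0^t e^{(t-s)\widehat{\mathbb A}} \mathcal F(y(s))\,ds
\]
on a small interval $[0,\tau]$. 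Since the semigroup supplied by Theorem \ref{th:main1} does not smooth and $\mathcal N(w)$ involves up to four spatial derivatives of $w$, the central obstacle is to ensure the correct mapping property of $\mathcal F$ at the $\mathscr D(\widehat{\mathbb A})$-level: differentiating $\mathcal N(w)$ once requires regularity strictly beyond the $H^4$-type control built into the beam component of $\mathscr D(\widehat{\mathbb A})$.

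To circumvent this, we introduce a small dissipative regularization in the beam equation, for instance a structural-damping term of the form $\epsilon\, \partial_x^4 w_t$. This converts the beam portion of the generator into an analytic part that provides the missing smoothing without altering the domain, so $\mathscr D(\widehat{\mathbb A}^\epsilon)=\mathscr D(\widehat{\mathbb A})$ and the linear theory of Theorem \ref{th:main1} persists. For each $\epsilon>0$, a standard Banach fixed point argument then succeeds on a ball in $C([0,\tau_\epsilon];\mathscr D(\widehat{\mathbb A}))$ using the local Lipschitz bound
\[
\|\mathcal N(w_1)-\mathcal N(w_2)\|_{L^2} \lesssim \big(\|w_1\|_{H^4}^2+\|w_2\|_{H^4}^2\big)\|w_1-w_2\|_{H^4},
\]
which follows from Sobolev multiplication in dimension one. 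This yields a regularized strong solution $y^\epsilon$ on a (possibly $\epsilon$-dependent) interval, satisfying the corresponding dissipated energy identity.

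The decisive step is the passage $\epsilon \to 0$. Here we invoke the regularization-free a priori estimates for the inextensible cantilever from \cite[Section 4]{maria2}, which supply $\epsilon$-uniform bounds on $y^\epsilon$ in $L^\infty(0,T_*;\mathscr D(\widehat{\mathbb A}))$ on a common existence interval $[0,T_*]$, as well as $Y$-level difference estimates for $y^{\epsilon_1}-y^{\epsilon_2}$. A standard weak-$*$/strong compactness extraction---weak-$*$ in the $H^4\times H^2$ beam regularity combined with strong convergence in $H^3\times H^1$ via an Aubin--Lions argument---allows the passage to the limit in the cubic products that constitute $\mathcal N(w)$. The limit $y$ is the claimed strong solution, satisfies the energy identity \eqref{energyrelation}, and belongs to $C([0,T];Y)$ by the a priori bound. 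Uniqueness at the strong level follows from an energy estimate on the difference of two solutions, again leveraging the nonlinear difference estimates of \cite{maria2}. The main obstacle throughout is the absence of any smoothing on the flow side: the entire passage to the limit in $\epsilon$ rests on the high-order, regularization-independent a priori bounds of \cite{maria2} being inherited uniformly by $y^\epsilon$ through the coupling, and on the multiplicative structure of $\mathcal N$ being compatible with the resulting weak/strong compactness in the cantilever's Sobolev scale.
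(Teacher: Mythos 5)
Your overall strategy---regularize with a Kelvin--Voigt term $\epsilon\,\partial_x^4 w_t$, run a fixed point for each $\epsilon>0$, and pass to the limit using the regularization-free estimates of \cite{maria2}---does match the paper's high-level plan, but there are two concrete errors and one substantial gap.

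First, your assertion that the damping ``provides the missing smoothing without altering the domain, so $\mathscr D(\widehat{\mathbb A}^\epsilon)=\mathscr D(\widehat{\mathbb A})$'' is false, and the paper is explicit about the opposite (citing \cite{chen-triggiani}): adding $\delta\partial_x^4 w_t$ strictly enlarges the domain. For the damped beam, membership in the domain requires only $\partial_x^4(Dw+\delta w_t)\in L^2(0,L)$, which does \emph{not} by itself control $\partial_x^4 w$ pointwise in time; one only recovers $w\in L^2(0,T;\mathscr D(\mathscr A))$ through the parabolic smoothing. This is precisely why the paper runs the fixed point not in $C([0,\tau];\mathscr D(\widehat{\mathbb A}))$ but in $\mathcal U_\delta = X_{T,\delta}\cap\mathcal Z_{T,\delta}$, with $L^2$-in-time control of $\|w\|_{\mathscr D(\mathscr A)}$ in $\mathcal Z_T$, and moreover establishes contraction only in the weaker topology $X_T$ (finite-energy), while the invariance of $\mathcal Z_T$ is proved separately. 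A direct Banach contraction in $C([0,\tau];\mathscr D(\widehat{\mathbb A}))$ with your $H^4$-Lipschitz estimate does not close: even with analytic smoothing in the beam component, $\mathcal N(w)\in L^2$ propagates to $L^2$-in-time regularity of $\partial_x^4 w$, not $C([0,\tau];H^4)$. The two-level fixed point (self-map strong / contract weak) is the standard device for quasilinear problems and is needed here.

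Second, and more importantly, you have pushed the entire difficulty into the phrase ``the a priori bounds of \cite{maria2} being inherited uniformly by $y^\epsilon$ through the coupling.'' That inheritance is in fact the heart of the paper's Section~3. The estimates in \cite{maria2} are for a \emph{given} beam forcing $p\in H^1(0,T;L^2(0,L))$; in the coupled problem $p = r_{(0,L)}\gamma_0[\phi_t+U\phi_x]$ is dynamic, and verifying the hypotheses requires the flow trace regularity of Lemma~\ref{le:FTR0} at both the base and time-differentiated levels. In particular, the natural multiplier $\partial_x^4 w_t$---which one would use naively to get $H^4$ control of $w$---is unusable both on the beam side (it is the ``problematic multiplier'' identified in \cite{maria2}) and on the flow side (it would require $\psi_t\in L^2(0,L)$ on the trace, while only $H^{-1/2-\epsilon}$ trace control is available). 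The paper instead (i) uses the finite-energy identity with cancellations, (ii) differentiates the full system in time and tests with $(\psi_t,w_{tt})$, and (iii) recovers fourth-order beam regularity via an equipartition test with $\partial_x^4 w$, exploiting the positivity of the quadratic term $\|w_x\partial_x^4 w\|^2$ produced by the inextensibility nonlinearity and bounding $\|\gamma_0[\psi]\|$ through elliptic estimates on $\Delta_U\phi$. None of these steps is automatic, and your proposal does not supply or even gesture at them; a reader could not reconstruct the $\epsilon$-uniform bound from what you have written.

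Your compactness and limit-passage remarks (weak-$*$ in $H^4\times H^2$, Aubin--Lions in lower norms, energy-difference uniqueness) are fine and roughly match the paper once the uniform bounds are in hand, but they are the easy part.
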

\begin{remark}[Locality] \label{locality}
By {\em local} above, we mean for any initial data $y_0 \in \mathscr D(\widehat{\mathbb A})$ there exists a time $T^*=T^*(y_0)$ so that the strong solution has 
$y \in C([0,T^*);\mathscr D(\widehat{\mathbb A})) \cap C^1(0,T^*;Y)$. Alternatively, for any time $T>0$, there  exists a ball $\mathcal B(T) \subset \mathscr D(\widehat{\mathbb A})$ so that if $y_0 \in \mathcal B(T)$ then there is a unique strong solution $y  \in C([0,T];\mathcal B(T))) \cap C^1(0,T;Y)$. 
\end{remark}

To provide some context on Theorem \ref{th:main1}, we point to the work in \cite{balshub,bal0}, which recalled classical approaches to airfoils (and also cantilevers) due to Theodorsen and Tricomi \cite{tricomi}. In particular, those authors connected linear flow-structure analysis to integral equations in $L^p$. However, their work did not address finite-energy spaces associated with a dynamical system, phrased in terms of Sobolev spaces. Their approach relies on the structures of classical flow solvers. Yet, these references treat the appropriate boundary conditions, coming from the engineering literature, for structures with free edges and they elucidate the Kutta-Joukowsky condition. 

Focusing more broadly on flow-plate interactions, in \cite{webster,igorirena}  well-posedness of the standard flow-panel model \cite{dowellnon,bolotin} was resolved in the subsonic case $U<1$. Earlier works of Lasiecka and Chueshov et al. \cite{LBC96,b-c-1} had considered regularized cases for this model and produced both stability and well-posedness results; see \cite{springer,survey1,book} for more discussion. These approaches relied on writing the entire flow-plate dynamics as a Cauchy problem on an appropriately chosen state space $Y$, as we do here, despite the dynamics being non-dissipative due to the nature of the flow-plate coupling. Semigroup methods, however, allow one to exploit formal cancellations at the boundary interface, overcoming a lack of requisite trace regularity for wave dynamics. Finding an equivalent norm for the dynamics, and invoking monotone operator methods, yields  generation of an appropriate semigroup; a posteriori estimates are obtained via the Hardy inequality and control over low frequencies. Sharp estimates on the physically-relevant plate nonlinearities \cite{springer} permits the use of perturbation techniques. The subsonic nature $|U|<1$ of the dynamics is also critical to this argument.

The success of semigroup techniques for subsonic flows \cite{webster}, created an abstract framework for well-posedness in the challenging supersonic case, $|U|>1$ case---an problem open for some 20 years after originally posed by Chueshov. The key insight there was provided through a formal analysis of energies: for the supersonic panel, $|U|>1$,  standard energies as presented in \cite{webster} become unsigned, yet, by a change of state variable $\phi_t \to [\partial_t+U\partial_x]\phi$ (velocity to acceleration potential), one can recover a viable flow energy. The trade-off is that this perspective renders the dynamics non-gradient in the new description, with ill-defined velocity traces at the interface, polluting the energy balance. The dynamics were then decoupled into (i) a dissipative piece and (ii) a perturbation. The dissipative piece  is handled via the theory developed in the subsonic case in \cite{webster}. However, item (ii) is more challenging; the perturbation is a priori undefined, but can be interpreted via duality if the wave velocity trace $r_{(0,L)}\gamma_0[\phi_t]$  resides in some (negative) Sobolev space. The inspiration for pursing this approach comes from hyperbolic theory, where traces behave better than the standard theory predicts \cite{miyatake,miyatake2,sakamoto}.  In \cite{supersonic}, a microlocal argument gives $r_{(0,L)}\gamma_0[\phi_t] \in L^2\left(0,T;H^{-1/2}(\mathbb{R}^2)\right)$, which recovers an energy-level bound on the dynamics. Via the variation of parameters formula, the problem is brought into the context of {\em abstract boundary control} and extrapolated semigroups \cite{redbook}, and a  fixed-point argument gives linear well-posedness. This recent resolution of well-posedness for flow-plate models opened up long-time behavior studies explored in \cite{abhi} and exposited in \cite{LW}. 

In 2010, the second and third authors discussed the KJC with Balarkishnan, and this motivated them to consider the  KJC flow-plate problem. They noted that the ``change of state variable"  described above would be viable with the flow conditions presented in \eqref{fullpot}. This produced \cite{KJ}, which considered a flow-plate problem in arbitrary dimensions, and provided the first analysis of the mixed and dynamic KJC in the context of full wave-PDE solutions.  Balakrishnan's work was driven by analysis of integral equations, whereas in \cite{KJ} the model carried simplified plate boundary conditions and  simplifying assumptions about the domain $\Omega$. In the microlocal domain, the techniques of \cite{supersonic} give rise to a particular (and challenging) inversion of the finite Hilbert transform, the resolution of which was indeed the central focus of \cite{KJ}. 

Finally, we contrast \cite{KJ} with the work at hand. In the present work, we focus on a 1D structural equation, where the cantilevered beam model of interest is delineated and justified \cite{inext1,maria1,inext2}. We require no simplifying assumptions on the geometry of structural domain in order to operatre microlocally. We note that  nonlinear structural effects in \cite{KJ} were given by the classical and well-studied von Karman (extensible) nonlinearity \cite{springer}. However, the recent theory of smooth (local) solutions to the quasilinear fourth order inextensible beam dynamics in \cite{maria2} allow us to analyze \eqref{fullpot} with the inextensible nonlinear stiffness model in \eqref{dowellnon}. Our work here is  salient for applications in piezoelectric energy harvesting \cite{energyharvesting,piezosurvey,DOWELL}, as well as more broadly, since many structures have cross-sections (chords) represented by an elastic cantilever. In studies of elastic flutter, one must have a geometric nonlinearity, i.e., a structural nonlinear restoring force, in order to properly observe limit cycle oscillations \cite{dowell1}. 

{\bf Summary}: The model under consideration is motivated by significant engineering applications in the area of flutter control and alternative energies. On the other hand, the mathematical analysis of the problem provides new insights on challenging questions in the areas of:\begin{itemize} \item (i) mixed PDE problems with Zaremba-type boundary conditions, where the boundary conditions change type along the same boundary edge, leading to compromised elliptic regularity; \item (ii) quasilinear beam equations, requiring modern techniques in nonlinear PDE analysis, where structural and geometric features yield crucial cancellations for a priori estimates; \item and (iii) hyperbolic coupling with beam dynamics, which demands the use of natural estimates for the beam and a full account of boundary regularity for the flow's aeroelastic potential.
\end{itemize}

\subsection{Outline of the Remainder of the Paper}

Following the approach first developed in \cite{KJ}, Section~\ref{Section2}  analyzes the linearized version of the coupled flow-structure system by decomposing the dynamics into a dissipative operator \(\mathbb{A}\) and a perturbation operator \(\mathbb{P}\). The operator \(\mathbb{A}\) is shown to be $m$-dissipative \cite{pazy} on a suitable energy space, hence generating a \(C_0\)-semigroup. Then, building on the techniques in \cite{KJ}, we treat the remaining terms as an unbounded perturbation \(\mathbb{P}\), and recast the system into an abstract boundary control framework. This requires adapting the microlocal analysis and associated estimates from \cite{KJ}. By verifying that the perturbation \(\mathbb{P}\) satisfies the required extrapolated regularity, we can apply the perturbation theory to establish semigroup well-posedness for the full linear system, yielding a generator $\bA + \mathbb P \equiv \widehat \bA: \mathscr D(\bA)\subset Y \to Y$.

In Section~\ref{Section3}, we incorporate the quasilinear stiffness effects that arise from large deflections of the cantilevered beam. To handle the resulting nonlinear dynamics, we introduce a strongly damped version of the beam system and construct a fixed point in the domain of the associated linear operator with strong damping present. The solution is obtained as a fixed point of a nonlinear map defined on a carefully chosen invariant space, where we implement a two-level contraction mapping argument to close  estimates. Using the fixed point result, we then remove the strong damping through a careful limit passage. The latter requires a priori estimates for strong solutions, obtained in the absence of damping, for which we appeal to the estimates  in \cite{maria2}. Finally, we obtain a unique strong solution that is local in the sense of Theorem \ref{th:main2}, but without any damping  in the model. 

\section{Development of A Linear Semigroup} \label{Section2}
We now take $\mathbf M = \mathbf I$ and $\mathbf S=D\partial_x^4$ in \eqref{fullpot}, and focus on the underlying
 linear theory. The abstract setup is taken from \cite{KJ}, adapted from \cite{webster,supersonic}, where we take a convenient translation for the wave dynamics by taking $\mu>0$. It can be removed after the construction of solutions in a standard way---see \cite[Ch. 3, Corollary 3.3.17]{book} and the discussion which follows it. \begin{equation}\label{flowplate0}\begin{cases}
(\partial_t+U\partial_x)^2\phi=\Delta_{x,z} \phi -\mu \phi& \text { in } \realstwo_+ \times (0,T),\\
\partial_z \phi = (\partial_t+U\partial_x)w & \text{ on } (0,L)   \times (0,T),\\
(\partial_t+U\partial_x)  \phi = 0 & \text{ on }\mathbb{R} \backslash [0,L] \times (0,T),\\
\phi(0)=\phi_0;~~\phi_t(0)=\phi_1,\\
w_{tt}+D\partial_x^4w= r_{(0,L)}\gamma_0[\phi_t+U\phi_x] & \text { in } (0,L) \times (0,T),\\
w=w_x=0 & \text{ on } \{x=0\} \times (0,T),\\
w_{xx}=w_{xxx}=0 & \text{ on } \{x=L\} \times (0,T),\\
w(0)=w_0;~~w_t(0)=w_1.
\end{cases}
\end{equation}
One of the central contributions of this manuscript is a clear exposition of the construction of the semigroup associated with the dynamics above.  
\subsection{State Space and Linear Energies}
In line with the critical observations in \cite{supersonic,KJ}, we consider the dynamic states $(\phi, \psi; w, w_t)$, where we make the identification $\psi = \phi_t+U\phi_x$, and $\psi$ is identified with the {\em acceleration potential} of the flow. Utilizing $\psi$ and $w_t$ as test functions for the system, we arrive at the following  linear energies, obtained via Green's Theorem applied to \eqref{flowplate0}:
\begin{align}\label{energies}
E_{pl} = \dfrac{1}{2}\big(||w_t||^2+D||w_{xx}||^2\big),~
E_{fl} =  \dfrac{1}{2}\big(||\psi||^2+||\nabla_{x,z} \phi||^2+\mu||\phi||^2\big),
~\mathcal E(t) =  E_{pl}(t)+E_{fl}(t).
\end{align}
These energies provide the formal energy balance for the system (implementing the KJC)
\begin{equation}\label{energyrelation}
\mathcal E(t)+ U\int_0^t \big( w_x,r_{(0,L)}\gamma_0[\psi]\big)_{L^2(0,L)} dt = \mathcal E(0). \end{equation}
This energy relation demonstrates the dynamics as a sum of a generating piece and a ``perturbation".

Denote $H_*^2 \equiv \{ w \in H^2(0,L)~:~w(x=0)=w_x(x=0)=0\}.$ Then finite energy constraints manifest themselves in the natural requirements on the functions $\phi$ and $w$:
\begin{equation}\label{flowreq}
\phi(x,z,t) \in C([0,T]; H^1(\realstwo_+))\cap C^1(0,T;L^2(\realstwo_+)), \end{equation} \begin{equation}\label{platereq}
w(x,t) \in C([0,T]; H_*^2)\cap C^1(0,T;L^2(0,L)).\end{equation}
The principal state space is taken to be 
\begin{equation} \label{statespace}
Y = Y_{f}\times Y_{b} \equiv \big(H^1(\realstwo_+) \times L^2(\realstwo_+)\big)\times\big(H_*^2 \times L^2(0,L)\big),
\end{equation}
 where the choice of norm on $H^1(\mathbb R^2_+)$ encodes the $\mu>0$ parameter, i.e.:
 \begin{equation}\label{norm}||(\phi,\psi)||^2_{Y_f} \equiv ||\nabla_{x,z} \phi||^2_{0,\mathbb R^2_+}+\mu||\phi||^2_{0,\mathbb R^2_+}+||\psi||^2_{0,\mathbb R^2_+}.\end{equation}
 \begin{remark} When $\mu=0$, we topologize the flow space in the sense of $W_1(\R^2_+)$,  the subspace of $L^1_{loc}(\mathbb R^2_+)$ with the {\em gradient norm}, $||\nabla \phi||_{0,\realstwo_+}$), denoting the homogeneous Sobolev space of order $1$. \end{remark}

\subsection{Formal Definition of Solutions}\label{solutions}
 
In the analysis of the work presented herein we will be making use of semigroup theory which necessitates the definition of various types of solutions. To begin with, a pair of functions $\big(\phi(x,z;t);w(x,t)\big)$ satisfying \eqref{flowreq}--\eqref{platereq} is said to be a {\em strong solution} to \eqref{flowplate0} on $[0,T]$ if 
\begin{itemize}
\item $(\phi_t,w_t) \in L^1(a,b; H^1(\realstwo_+)\times H_*^2)$ for any $(a,b) \subset [0,T]$.
\item $(\phi_{tt},w_{tt}) \in L^1(a,b; L^2(\realstwo_+)\times L^2(0,L))$ for any $(a,b) \subset [0,T]$.
\item $\Delta \phi(t)\in L^2(\realstwo_+)$ and $w(t) \in H^4(0,L)\cap H^2_*$ for almost all $t\in [0,T]$.
\item The boundary conditions $\partial_x^2w(L,t)=0$ and $\partial_x^3w(L,t)=0$ hold for almost all $t \in [0,T]$.
\item The equation ~~$w_{tt}+D\partial_x^4 w =r_{(0,L)}\gamma_0[\phi_t+U\phi_x]$ holds in $L^2(0,L)$ for almost all $t >0$.
\item The equation ~~$(\partial_t + U\partial_x)^2\phi=\Delta_{\mu} \phi$ holds for almost all $t>0$ and almost all $(x,z) \in \realstwo_+$.
\item The equation $r_{(-\infty,0) \cup (L, \infty)}\gamma_0[\phi_t+U\phi_x] = 0$ holds for almost all $t>0$ and almost all $x \in (-\infty,0) \cup (L,\infty)$.
\item The equation $\gamma_0[\partial_z\phi]=\ w_t+Uw_x$ holds for almost all $t>0$ and almost all $x \in (0,L)$. 
\item The initial conditions are satisfied pointwise; that is ~$\phi(0)=\phi_0, ~\phi_t(0)=\phi_1, ~w(0)=w_0, ~w_t(0)=w_1,$ from which we can infer $\psi(0)=\phi_1+U\partial_x\phi_0$.
\end{itemize}

A pair of functions $\big(\phi(x,z;t);~w(x;t)\big)$ is said to be a {\em generalized solution} to  problem (\ref{flowplate0}) on the interval $[0,T]$ if there exists a sequence of strong solutions $(\phi^n(t);w^n(t))$ with some initial data $(\phi^n_0,\phi^n_1; w^n_0; w^n_1)$ such that
$$\lim_{n\to \infty} \max_{t \in [0,T]} \Big\{||\partial_t\phi-\partial_t \phi^n(t)||_{L^2(\realstwo_+)}+||\phi(t)-\phi^n(t)||_{H^1(\realstwo_+)}\Big\}=0$$ and
$$\lim_{n \to \infty} \max_{t \in [0,T]} \Big\{||\partial_t w(t)-\partial_t w^n(t)||_{L^2(0,L)} + ||w(t) - w^n(t)||_{H_*^2}\Big\}=0.$$

From the semigroup point of view, we consider the state $y=(\phi,\psi;w,w_t) \in Y$, with $\psi = \phi_t+U\phi_x$. Generalized solutions will correspond to semigroup solutions for  initial data in $Y$. It is straightforward that  generalized solutions are  {\em weak}, i.e., satisfy variational forms---see \cite{springer,KJ}.

\begin{remark}[Strong Nonlinear Solutions] We note that the above definition of a strong solution is  easily modified to include the nonlinearity associated to inextensibility. In particular, we need only replace the third bullet point above with 
\begin{itemize} \item The equation ~~$w_{tt}+D\partial_x^4 w-D\partial_x\big[(w_{xx})^2w_x \big]+D\partial_{xx}\big[w_{xx}\big(1+(w_x)^2\big)\big] =r_{(0,L)}\gamma_0[\phi_t+U\phi_x]$ holds in $L^2(0,L)$ for almost all $t >0$.\end{itemize} Since beam strong solutions have $w \in H^4\cap H^2_*$, there is no additional modification needed to describe strong solutions for the nonlinear system \eqref{fullpot*} with $\beta=1$, as higher order boundary conditions are recovered in the standard way \cite{maria2}. \end{remark}

\subsection{Semigroup Generation Approach} 
Motivated by the setup in \cite{supersonic}, as well as a preliminary work on the KJC problem \cite{KJ}, we decompose the dynamics into a dissipative piece $\bA$ (with $\sigma=0$ below) and a perturbation piece $\bP$ (with $\sigma=1$ below). We define and use $\Delta_{\mu} \equiv \Delta - \mu I$, and  consider:
\begin{equation}\label{flowplate}\begin{cases}
(\partial_t+U\partial_x)^2\phi=\Delta_{\mu} \phi & \text { in } \realstwo_+ \times (0,T),\\
\partial_z \phi = w_t + \sigma Uw_x & \text{ on } (0,L) \times (0,T)\\
(\partial_t+U\partial_x)  \phi = 0 & \text{ on } [0,L]^c \times (0,T) \\
w_{tt}+D\partial_x^4w= r_{(0,L)} \gamma_0[\phi_t+U\phi_x] & \text { in } (0,L) \times (0,T),\\
w(x=0)=w_x(x=0)=0,~~ w_{xx}(x=L)=w_{xxx}(x=L)=0\\
\end{cases}
\end{equation}
 We will define and characterize $\mathscr D(\mathbb A)$, allowing us to invoke integration by parts on the state space $Y$. This operator $\mathbb A$ will be shown to be dissipative \cite{pazy}. Following dissipativity, the resolvent system will be considered with $\lambda>0$, i.e.
$$(\mathbb A -\lambda I)y = \mathcal  F \in Y,$$
where we will produce a solution $y \in \mathscr D(\mathbb A)$ with associated estimates. This step is exceptioanlly challenging in the context of $\mathbb A$, which will be seen to be degenerate and has the flow component defined on an unbounded domain $\mathbb R^2_+$. Solving the system results in the  {\em maximality} property of $\mathbb A$. With maximality and dissipativity established on the Hilbert space $Y$, the Lumer-Phillips Theorem \cite{pazy} applies, yielding strong and semigroup solutions associated to  \eqref{flowplate} with $\sigma=0$.

We consider the dynamic states $(\phi, \psi; w, w_t)$, we recall the energies:
\begin{align}\label{energies*}
E_{b}(t) & = \dfrac{1}{2}\big(||w_t||^2+D||w_{xx}||^2\big),~~~
E_{f}(t) =  \dfrac{1}{2}\big(||\psi||^2+||\nabla_{x,z} \phi||^2 + \mu ||\phi||^2\big),
\end{align}
and ~$\mathcal E(t)  =  E_{b}(t)+E_{f}(t)$, and the formal energy balance for the system 
\begin{equation}\label{energyrelation*}
\mathcal E(t)+ \sigma U\int_0^t \int_0^L  w_x(x;\tau)\psi (x,0;\tau)dx dt = \mathcal E(0).
\end{equation}
\noindent Note that the boundary term arises from cancellations involving $\phi_t$ in the linear flow structure. 

{\bf Assume now that $\mu>0$ and $|U| < 1$}; we consider \eqref{flowplate} with $\sigma=0$. 
To begin with, let us first define a constituent linear beam operator which will be convenient below: $\cA w = D\partial_x^4 w$   and 
\begin{equation}\label{biharmonicdomain}
\mathscr D(\cA) = \{ w \in H^4(0,L) : w(0) =w_x(0)=0; \; w_{xx}(L)=w_{xxx}(L) = 0\}.
\end{equation} 
From this we make the standard \cite{redbook,htw}  identification $\mathscr D(\cA^{1/2}) = H^2_*,$ topologized by the inner-product $(w,\tilde w)_{H^2_*} = D(w_{xx},\widetilde{w}_{xx})_{L^2(0,L)}$. 
We recall the principal state space: $$Y =  \big(H^1(\realstwo_+) \times L^2(\realstwo_+)\big)\times\big(H_*^2 \times L^2(0,L)\big).$$

From the semigroup point of view, we consider the state $y=(\phi,\psi;w,w_t) \in Y$ with dynamics operator
$\bA: \mathscr D(\bA) \subset Y \to Y$ by
\begin{equation}\label{op-A}
\bA \begin{pmatrix}
\phi\\
\psi\\ w\\ v
\end{pmatrix}=\begin{pmatrix}-U\partial_x \phi+\psi\\- U\partial_x\psi + \Delta_{\mu} \phi \\ v \\ -\cA w+ \gamma_0[\psi] \end{pmatrix},
\end{equation}
with domain $\mathscr D(\bA)$ encoding the boundary conditions, given by
\begin{equation}\label{dom-A}
\mathscr D(\bA) \equiv \left\{ y =  \begin{pmatrix}
\phi\\
\psi\\ w\\ v
\end{pmatrix}\in Y\; \left| \begin{array}{l}
-U \Dx \phi + \psi \in H^1(\R^2_+),~\\ -U \Dx \psi  +\Delta_{\mu} \phi \in L^2(\R^2_+), \\
v \in  H_*^2,~\\
-\cA w + r_{(0,L)}\gamma_0  [\psi] \in L^2(0,L) \\
\psi =0 \text{ on } (-\infty,0)\cup(L,\infty), ~~\Dz \phi = v  \text{ in} ~ (0,L). \end{array} \right. \right\}.
\end{equation}
From here, we  obtain the dynamics of \eqref{flowplate} with $\sigma=0$ in the evolution:
\begin{equation}
 y_t = \mathbb A y ;~~y(0)=y_0=(\phi_0, \psi_0; w_0, w_1) \in Y,
\end{equation}
where $\psi_0 = \phi_1+U\partial_x\phi_0$.

To proceed with the analysis of semigroup generation for $\mathbb A$ on $Y$, we first characterize the domain $\mathscr D(\mathbb A)$ to obtain appropriate elliptic regularity for $\phi$ from the mixed problem.
As we are in the subsonic case, $|U| < 1$, we observe a strongly elliptic problem.  

\begin{theorem}[Domain Characterization]\label{domainchar}
If $(\phi,\psi;w,v) \in \mathscr D(\mathbb A)$, then
\begin{itemize}
\item $\phi_x, ~\psi, ~[\psi-U\phi_x] \in H^1(\mathbb R^2_+)$;
\item $w \in \mathscr D(\mathscr A)$;
\item $\Delta_{\mu} \phi \in L^2(\mathbb R_+^2)$;
\item $r_{(0,L)}\gamma_0[\phi_{xz}] \in H^1(0,L)$ and $\gamma_1[\phi] \in H^{-1/2}(\mathbb R)$ (as an extension of the traditional Neumann trace).
\end{itemize}
\end{theorem}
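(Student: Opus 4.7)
The plan is to recognize that all four bullets reduce to a single strongly elliptic mixed boundary-value problem for $\phi$ on $\realstwo_+$, strongly elliptic precisely because $|U|<1$. Set $\eta := \psi - U\Dx\phi$; by the first component of $\bA y$, the hypothesis $y \in \mathscr D(\bA)$ places $\eta \in H^1(\realstwo_+)$. Writing $\psi = \eta + U\Dx\phi$ in the second component of $\bA y$ yields
\begin{align*}
-U\Dx\psi + \Delta_\mu \phi = (1-U^2)\Dx^2\phi + \Dz^2\phi - \mu\phi - U\Dx\eta \in L^2(\realstwo_+),
\end{align*}
and since $U\Dx\eta \in L^2$, I obtain $(1-U^2)\Dx^2\phi + \Dz^2\phi - \mu\phi = F$ for some $F \in L^2(\realstwo_+)$. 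This is the strongly elliptic equation governing $\phi$.

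The boundary data at $\{z=0\}$ is of mixed Zaremba type. On the elastic portion $(0,L)$, the domain constraint reads $\Dz\phi = v$ with $v\in H_*^2$, a Neumann datum. On the complement $(-\infty,0)\cup(L,\infty)$, the KJC $\gamma_0[\psi]=0$ combined with $\psi = \eta + U\Dx\phi$ gives $U\gamma_0[\Dx\phi] = -\gamma_0[\eta]$, a Dirichlet-type condition on the tangential derivative $\Dx\phi$ with data in $H^{1/2}$ (the case $U=0$ is easier, since then $\eta = \psi \in H^1$ is automatic). The central step is to show $\Dx\phi \in H^1(\realstwo_+)$. I would accomplish this by setting $\tilde\phi := \Dx\phi$, which formally satisfies the same strongly elliptic equation with right-hand side in $H^{-1}(\realstwo_+)$ (obtained by tangential differentiation of $F$), together with the dual boundary conditions: Dirichlet $\gamma_0[\tilde\phi] = -U^{-1}\gamma_0[\eta] \in H^{1/2}$ on $(-\infty,0)\cup(L,\infty)$, and Neumann $\Dz\tilde\phi = \Dx v \in H^1(0,L)$ on $(0,L)$. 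Lifting the Dirichlet data by a harmonic extension to reduce to homogeneous Dirichlet, and then applying Lax--Milgram on the natural mixed-boundary energy space within the Zaremba-type framework of \cite{savare}, produces a unique weak $H^1$ solution; by distributional uniqueness it must coincide with $\Dx\phi$.

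With $\Dx\phi \in H^1(\realstwo_+)$ secured, the remaining claims follow essentially algebraically. The identity $\psi = \eta + U\Dx\phi$ places both $\psi$ and $\psi - U\Dx\phi = \eta$ in $H^1(\realstwo_+)$. Solving the elliptic equation for $\Dz^2\phi$ yields $\Dz^2\phi \in L^2$, and combined with $\Dx^2\phi \in L^2$ and $\phi \in H^1$ this gives $\Delta_\mu\phi \in L^2(\realstwo_+)$. Since $\psi \in H^1(\realstwo_+)$, the trace $\gamma_0[\psi]$ lies in $H^{1/2}(\R)$ and restricts to $L^2(0,L)$; the domain condition $-\cA w + r_{(0,L)}\gamma_0[\psi] \in L^2(0,L)$ then forces $\cA w \in L^2(0,L)$, and standard fourth-order elliptic regularity places $w \in \mathscr D(\cA)$ (recovering the free-end conditions at $x=L$ variationally through $H_*^2$ test functions). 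On $(0,L)$, $\Dz\phi = v \in H_*^2$, so $r_{(0,L)}\gamma_0[\Dx\Dz\phi] = \Dx v \in H^1(0,L)$. Finally, $\gamma_1[\phi] \in H^{-1/2}(\R)$ is defined as the weak normal trace arising from Green's identity for the pair $(\phi, \Delta_\mu\phi) \in H^1(\realstwo_+) \times L^2(\realstwo_+)$.

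The main obstacle is the Zaremba-type mixed regularity for $\tilde\phi$ in the second paragraph: the boundary condition switches type at the two transition points $x=0$ and $x=L$, so full $H^2$-regularity of $\phi$ is obstructed by corner singularities -- the ``compromised elliptic regularity'' noted in the Introduction. The saving feature is that the theorem requires only $\Dx\phi \in H^1$ (not $\phi \in H^2$), and this reduced tangential level sits exactly at the $H^1$ threshold accessible to the variational theory of the mixed problem, provided the Dirichlet and Neumann data lie in the correct trace spaces, which they do by $\eta \in H^1(\realstwo_+)$ and $v \in H_*^2$.
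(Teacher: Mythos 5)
Your proposal is correct and follows essentially the same route as the paper: you isolate the strongly elliptic mixed (Zaremba-type) problem for $\tilde\phi = \partial_x\phi$ by tangential differentiation, solve it variationally via Lax--Milgram, identify the solution with $\partial_x\phi$ by uniqueness, and then read off the remaining regularity algebraically, ending with the extended Green's identity for the Neumann trace. The only cosmetic difference is that you introduce $\eta = \psi - U\partial_x\phi$ directly from the domain membership, whereas the paper frames the same quantities through the $\lambda=0$ resolvent data $(f_1,f_2;g_1,g_2)$; the substance and the key Lax--Milgram step are identical.
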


\begin{proof}
Let us consider the resolvent equation with $\lambda=0$. To that end, let us take $(f_1,f_2;g_1,g_2) \in Y$, in accordance with the description of each component of the generator, and consider:
\begin{align}
-U \phi_x + \psi & = f_1 \in H^1(  \realstwo_+) \label{firstflow*}\\
 -U \psi_x +\Delta_{\mu} \phi &= f_2\in L^2(  \realstwo_+) \label{secondflow*}\\
v &= g_1 \in \mathscr D(\cA^{1/2} ) \\
- \cA w + r_{(0,L)}\gamma_0[\psi] &= g_2 \in L^2(0,L) \label{secondbeam*},
\end{align}
with given boundary conditions (justified a posteriori):
\begin{equation} \label{boundaryconditions*}
\begin{cases}
\Dz \phi = v &\text{in }~(0,L) \\
\psi =0 &\text{in }~ (-\infty,0) \cup (L,\infty).
\end{cases}
\end{equation}
Using the first relation for $f_1$ and the last boundary condition, and restricting, we obtain the identity
$$ U \phi_x = - f_1 \in H^{1/2} ((0,L)^c)$$ via the trace theorem. At this stage we must invoke two elliptic problems to obtain additional information for the regularity of the state functions. To that end, let $\tilde \phi = \phi_x$. We differentiate equation \eqref{firstflow*} twice tangentially (in $x$) and \eqref{secondflow*} once, combine them, and differentiate the first boundary condition in \eqref{boundaryconditions*} tangentially. This yields the elliptic problem in $\Delta_U \equiv \Delta_{\mu}-U^2\partial^2_x$: 
\begin{equation}
\begin{cases}
\Delta_U\tilde \phi = \partial_xf_{2}+U\partial_x^2f_{1} \in H^{-1}(\realstwo_+)\\
\Dz \tilde\phi = \partial_xg_{1} \in H^1(0,L)\\
\tilde \phi=-\frac{1}{U}f_1 \in H^{1/2}([0,L]^c)\end{cases}
\end{equation}

A direct application of Lax--Milgram yields  a variational solution $\tilde \phi = \phi_x \in H^1(\mathbb R^2_+)$ above to the mixed problem. 
 In this instance we have the functions $(\phi, \psi; w, v) \in Y$ in hand already. Thus we identify $\partial_x\phi$ with the unique solution of this variational problem; then we can return to \eqref{firstflow*} to conclude that $\psi \in H^1(\mathbb R^2_+)$, so $\psi_x \in L^2(\mathbb R^2_+)$. Moreover, \eqref{secondflow*} then yields that $\Delta_{U} \phi \in L^2(\mathbb R^2_+)$, and since $\phi_x \in H^1(\mathbb R^2_+)$, $\mu>0$, and $|U|<1$, we have that $\Delta_{\mu} \phi \in L^2(\mathbb R^2_+)$ (and hence $\Delta \phi \in L^2(\mathbb R^2_+)$ as well, since $\phi \in H^1(\mathbb R^2_+)$).

We proceed to read off the further domain criteria:
\begin{itemize}
\item Since $\psi \in H^1(\mathbb R^2_+)$, we have $\gamma_0[\psi] \in H^{1/2}(\mathbb R) \subset L^2(\mathbb R)$. Restricting this to $(0,L)$ we have $r_{(0,L)}\gamma_0[\psi] \in L^2(0,L)$.

\item Using the result above, $r_{(0,L)}\gamma_0[\psi] \in L^2(0,L)$, we can deduce from \eqref{secondbeam*} that $-\cA w \in L^2(0,L)$, and thus with $w \in H^2_*$, it is a standard exercise (recovering the higher order boundary conditions at $x=L$) to obtain $w \in \mathscr D(\mathscr A)$. 
\end{itemize}
Finally, the above regularity permit integration by parts for the variables $\phi$ and $\psi$ (in the appropriate generalized Green's sense) in the dissipativity calculation. In particular, we integrate by parts in $x$ (tangentially) with no boundary contributions at infinity for the variables $\phi, \phi_x \in H^1(\mathbb R^2_+)$, and we utilize Green's identity for $\Delta \phi \in L^2(\mathbb R^2_+)$ since $ \phi \in H^1(\mathbb R^2_+)$. In particular, identifying the Neumann trace $\gamma_1 \in \mathscr L(H^2(\mathbb R^2_+),H^{1/2}(\mathbb R))$ with its extension as a bounded linear operator on $H^{-1/2}(\mathbb R)$, we have the following identity:
\begin{equation}\label{greenss}(\Delta \phi, \widetilde \phi) = -(\nabla \phi, \nabla \widetilde \phi)+\langle \gamma_1[\phi], \gamma_0[\widetilde \phi]\rangle_{H^{-1/2}(\mathbb R) \times H^{1/2}(\mathbb R)},~~\forall ~\widetilde \phi \in H^1(\realstwo_+).
\end{equation}
\end{proof}

\subsection{Dissipativity of $\mathbb A$}\label{dissipativity}
In this section we will show that the operator $\mathbb A : \mathscr D(\mathbb A) \subset Y \to Y$ is dissipative in the sense of \cite{pazy}. 
We recall the following inner product on the state space $Y$: 

For $y = (\phi,\psi;w,v),~\widetilde y=(\widetilde \phi, \widetilde \psi; \widetilde w, \widetilde v) \in Y$, consider
\begin{equation} ( y, \widetilde{y})_Y \equiv  ( \nabla \phi , \nabla \widetilde{\phi} )_{  \realstwo_+} +\mu(\phi,\widetilde\phi)_{\realstwo_+} + (\psi, \widetilde{\psi})_{  \realstwo_+}   + D(\partial_x^2 w, \partial_x^2 \widetilde w )_{(0,L)}  + ( v, \widetilde{v} )_{(0,L)}, 
\end{equation}
where each constituent inner product above is $L^2$ on the domain specified by the subscript.

With this inner product, we have for each $y\in \mathscr D(\mathbb A)$:
\begin{align*}(\mathbb A y, y)_Y =&~  \big(\nabla[-U\partial_x\phi+\psi], \nabla \phi\big) + \mu \big(\psi-U\partial_x\phi,  \phi\big) 
\\& +\big([\Delta-\mu I)\phi-U\partial_x \psi ,\psi\big)  \\
& + D (\partial_x^2 v, \partial_x^2w) +\big(r_{(0,L)}\gamma_0[\psi]-\mathscr Aw,v\big).
\end{align*}
According to the previous section, with $y \in \mathscr D(\mathbb A)$, we have that $\phi, \phi_x, \psi \in H^1(\mathbb R_+^2)$, and $\Delta \phi \in L^2(\realstwo_+)$. Additionally, we have $v \in H^2_*$ and $w \in \mathscr D(\mathscr A)$. This allows us to simplify the terms above in standard inner-products as: 
\begin{align*}(\mathbb A y, y)_Y =&~ -U(\nabla\phi_x,\nabla \phi)+ \big(\nabla \psi, \nabla \phi\big) + \mu \big(\psi,  \phi\big) -\mu U(\partial_x\phi,\phi)
\\& +\big(\Delta \phi, \psi\big) -\mu (\phi,\psi) -U(\psi_x,\psi)+ D (\partial_x^2 v, \partial_x^2w) \\
&+\big(r_{(0,L)}\gamma_0[\psi],v\big)-D\big(\partial_x^4w,v\big)
\end{align*}
Through $H^1(\mathbb R^2_+)$ membership, we observe that total derivative terms vanish
$$(\nabla \phi_x, \nabla \phi) = (\phi_x,\phi)=(\psi_x,\psi) = 0,$$
by invoking integration by parts in $x$ and using the notion of vanishing at $\pm |x|\to \infty$ implied in $H^1$  \cite{springer}. We then invoke generalized Green's Theorem, valid in $\mathscr D(\bA)$. In particular, we have
$$(\Delta \phi,\psi) = -(\nabla \phi, \nabla \psi) - \langle \gamma_1[\phi],\gamma_0[\psi] \rangle_{H^{-1/2}(\mathbb R) \times H^{1/2}(\mathbb R)}.$$
Replacing $\gamma_1[\phi]=-\gamma_0[\partial_z \phi]=-v$ on $(0,L)$ and $\gamma_0[\psi] = 0 $ on $[0,L]^c$, we have
$$(\Delta \phi,\psi) = -(\nabla \phi, \nabla \psi) - (v,\gamma_0[\psi])_{(0,L)}.$$

Thus, combining these calculations, we have obtained
\begin{align}
(\mathbb Ay,y) = 0,~~\forall~ y \in \mathscr D(\mathbb A).
\end{align}
\begin{remark}
We have actually shown that both $\pm \mathbb A$ are dissipative in this scenario. 
\end{remark}

\subsection{Resolvent Equation and Maximality}
The resolvent system for $\lambda > 0$ is given through the following system:
\begin{align}
\psi -U \phi_x- \lambda \phi  & = f_1 \in H^1(  \realstwo_+) \label{firstflow**}\\
 \Delta_{\mu} \phi  -U \psi_x - \lambda\psi &= f_2\in L^2(  \realstwo_+) \label{secondflow**}\\
v-\lambda w &= g_1 \in  H^2_* \\
 - \cA w + \gamma_0[\psi] -\lambda w &= g_2 \in L^2(0,L) \label{secondbeam**},
\end{align}
with boundary conditions:
\begin{align} \label{boundaryconditions**}
\begin{cases}
\partial_z \phi =  v & \text{on }(0,L) \\
~~~ \psi = 0 ~~&\text{on }~(-\infty,0)\cup(L,\infty).
\end{cases}
\end{align}
With $\lambda >0$, we will show that  for the given $\mathcal F = (f_1, f_2;g_1,g_2) \in Y$ we can obtain $y=(\phi, \psi;w,v) \in \mathscr D(\mathbb A)$ (with associated estimates), thereby showing that $\mathbb A$ is maximal; hence, with dissipativity established in Section \ref{dissipativity},  the Lumer-Philips Theorem guarantees $\mathbb A$ generates a $C_0$-semigroup of contractions on $Y$. 

We pause to mention that {\em this is a very delicate analysis}; the elliptic work here constitutes one of the central contributions of this treatment. The strategy involves differentiating the system to obtain a weak $H^{-1}$ system that can be solved variationally for the variable $\lambda \phi-U\phi_x \in H^1(\mathbb R^2_+)$. Then we will reconstruct a full solution $y$ from $\phi_x$ and $\psi$ in $\mathscr D(\mathbb A)$. This will involve the touchy construction of a particular antiderivative of  $\phi_x$ that has prescribed decay properties as $|x|\to \infty$, ensuring  integration by parts is viable in  $x$ for the variables $\phi, \psi,$ and $\phi_x$. 

Consider the state  relation for $\psi$, which motivates the variable $\widehat \phi \equiv D_{\lambda} \phi = \lambda \phi +U\partial_x\phi$, yielding the flow system:
\begin{align}
\psi -\widehat \phi & = f_1 \in H^1(  \realstwo_+)\\[.2cm]
\lambda^2\widehat \phi+2\lambda U\widehat\phi_x-\Delta_{\mu}\widehat \phi+U^2\widehat \phi_{xx} &= -[\lambda^2f_1+2\lambda U \partial_xf_1+U^2\partial_x^2f_1 +D_{\lambda} f_2]\in H^{-1}(  \realstwo_+) \label{phihatRHS}
\end{align}
with boundary conditions:
\begin{align} \label{phihatboundary}
\begin{cases}
\partial_z \widehat \phi = D_{\lambda} v  &\text{ on } (0,L)\\
~~~ \widehat \phi = -f_1 ~& \text{ on } ~ (-\infty,0)\cup(L,\infty).
\end{cases}
\end{align}
We  assume that $v \in H^2_*$ is given, which produces $D_{\lambda}v \in H^1_*(0,L)$, and with $f_1 \in H^1(\mathbb R^2_+)$, we obtain that
$$r_{[0,L]^c}\gamma_0[f_1] \in H^{1/2}((-\infty,0)\cup(L,\infty)).$$
Let us now denote the right hand side of \eqref{phihatRHS} by $F$, namely
\begin{equation}
F\equiv -[\lambda^2f_1+2\lambda U \partial_xf_1+U^2\partial_x^2f_1 +D_{\lambda} f_2] \in \partial_x[L^2(\mathbb R^2_+)] \subset H^{-1}(\mathbb R_+^2).
\end{equation} Then equation \eqref{phihatRHS}, accompanied with the boundary conditions \eqref{phihatboundary}, becomes:
\begin{equation} \label{phi_xstrong}
\begin{cases}
-\Delta_U \widehat \phi +2\lambda U\widehat\phi_x + \lambda^2\widehat \phi = F &\in H^{-1}(\realstwo_+)\\
\widehat \phi=-r_{(L,\infty)} ~\gamma_0[f_1] \equiv G_1 &\in \tilde{H}^{1/2}([0,L]^c)\\
\partial_{z} \widehat \phi = D_{\lambda} v \equiv G_2 &\in \tilde{H}^1_*(0,L). 
\end{cases}
\end{equation}
Here, the space $\tilde{H}^{1/2}([0,L]^c)$ is the Hilbert space of the $H^{1/2}(\mathbb{R} \backslash [0,L])$ functions whose trivial extension on the real line belongs to $H^{1/2}(\reals)$ (a similar definition applies for $\tilde{H}^1_*(0,L)$) \cite{Grisvard}.

We now want to define a weak formulation associated to the problem \eqref{phi_xstrong}. For this problem, we will utilize the setup that appears in \cite{savare} for mixed Zaremba-type  problems. To that end, let us introduce the subspace of $H^{1}(\realstwo_+)$:
\begin{equation*}
V = \{ \zeta \in H^1(\realstwo_+)~: ~r_{(-\infty,0)\cup (L,\infty)}\gamma_0[\zeta] \equiv 0 \}.
\end{equation*}
On this space, we consider the bilinear form $a: V \times V \to \reals$ :
\begin{equation} \label{bilinearform}
a(q,\zeta) = (1-U^2)(q_{x},\zeta_{x})+(q_{z},\zeta_{z}) + \lambda U (q_x, \zeta) +  \lambda U (q, \zeta_x)  + ( \mu +\lambda^2) (q,\zeta).
\end{equation}
In addition, let us define the linear functional $L$ on $V$ described by:
\begin{equation}\label{dualityRHS}
\zeta  \in V \to \langle L, \zeta \rangle_{V' \times V} \equiv \langle F, \zeta \rangle  +(G_2, r_{(0,L)} \gamma_0[\zeta])_{L^2(0,L)},
\end{equation}
where the duality pairing above is justified by recalling the definition of $F$ above and writing
$$\langle F, \zeta \rangle \equiv ([\lambda f_2-\lambda^2f_1,\zeta)_{L^2(\mathbb R^2_+)}-\big(2\lambda U f_1+U^2\partial_xf_1 +Uf_2], \partial_x\zeta\big),$$
of course noting that integration by parts in $x$ is tangential.

With the information introduced above, we are now interested in the variational problem: 

\noindent Find $\tilde \phi \in H^1(\realstwo_+)$ such that:
\begin{equation}\label{phi_xweak}
\begin{cases}
a(\tilde \phi,\zeta )= \langle L, \zeta \rangle_{V'\times V},&\text{for all } \zeta \in V \\
\tilde \phi - \tilde{G}_1 &\in V,
\end{cases}
\end{equation}
where $\tilde{G}_1 \in H^1(\realstwo_+)$ is an interior function with $G_1 \in \tilde{H}^{1/2}(\mathbb{R} \backslash [0,L])$  such that:
\begin{equation}\label{Gtilde}
r_{(-\infty,0)\cup (L,\infty)} \gamma_0 [\tilde{G}_1] = G_1,
\end{equation}
as guaranteed by the trace theorem. 

\begin{corollary} \label{Corollaryphi}
There exists $\phi \in V$ that satisfies $a(\phi,\zeta)= \mathcal{L}(\zeta)~\text{for all } \zeta \in V$, where $a$ is the bilinear form introduced in \eqref{bilinearform}  and $\mathcal{L}(\zeta)=\langle L, \zeta \rangle - a(\tilde{G}_1,\zeta).$
\end{corollary}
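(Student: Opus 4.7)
The plan is to recognize Corollary \ref{Corollaryphi} as a direct application of Lax--Milgram on the Hilbert space $V$. First I would verify that $V = \{\zeta \in H^1(\mathbb R^2_+) : r_{(-\infty,0)\cup(L,\infty)}\gamma_0[\zeta]=0\}$ is a closed subspace of $H^1(\mathbb R^2_+)$, being the kernel of a continuous trace map, and hence inherits a Hilbert structure from the ambient space. Continuity of the right-hand side $\mathcal{L}(\zeta) = \langle L,\zeta\rangle - a(\widetilde G_1,\zeta)$ on $V$ is then straightforward: the pairing $\langle F,\zeta\rangle$ decomposes into $L^2$-pairings with $\zeta$ and with $\partial_x\zeta$ via the tangential reformulation given in \eqref{dualityRHS}; the boundary contribution $(G_2, r_{(0,L)}\gamma_0[\zeta])_{L^2(0,L)}$ is controlled via the standard trace inequality; and $a(\widetilde G_1,\zeta)$ inherits continuity from $a$ itself, which is evidently bilinear and bounded on $H^1(\mathbb R^2_+)\times H^1(\mathbb R^2_+)$ by Cauchy--Schwarz applied term by term in \eqref{bilinearform}.

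The hard part, and the only step that uses a genuine idea, is coercivity of $a$ on $V$. Testing with $\zeta = q \in V$ yields
\begin{equation*}
a(q,q) = (1-U^2)\|q_x\|^2 + \|q_z\|^2 + 2\lambda U(q_x,q) + (\mu+\lambda^2)\|q\|^2.
\end{equation*}
The cross term $2\lambda U(q_x,q)$ is the potentially problematic contribution, but I expect it to vanish identically: tangential integration by parts in $x$ gives $2(q_x,q) = \int_{\mathbb R^2_+}\partial_x(q^2)\,dx\,dz = 0$ for every $q \in H^1(\mathbb R^2_+)$, with no boundary contribution since the $x$-direction is unbounded and $H^1$ functions decay at $\pm\infty$ in the integrated sense already invoked in Section \ref{dissipativity}. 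Consequently
\begin{equation*}
a(q,q) \geq \min\{1-U^2,\; 1,\; \mu+\lambda^2\}\,\|q\|^2_{H^1(\mathbb R^2_+)},
\end{equation*}
which is strictly positive precisely because of the standing subsonic assumption $|U|<1$ together with $\mu,\lambda > 0$. The degeneration of this lower bound as $|U|\to 1$ reflects the genuine transition of the stationary problem for $\widehat\phi$ away from strong ellipticity, rather than a technical artifact, and it is what confines the present semigroup construction to the subsonic regime.

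With continuity, coercivity, and boundedness of $\mathcal{L}$ in place, Lax--Milgram delivers a unique $\phi \in V$ satisfying $a(\phi,\zeta) = \mathcal{L}(\zeta)$ for all $\zeta \in V$, which is exactly the statement of the corollary. Setting $\widetilde\phi \equiv \phi + \widetilde G_1 \in H^1(\mathbb R^2_+)$ would then recover the variational solution of \eqref{phi_xweak}, with the Dirichlet part of the boundary data encoded by \eqref{Gtilde} and the Neumann part realized weakly through the bilinear form. From here one returns to the definition $\widehat\phi = \lambda\phi + U\partial_x\phi$ and the algebraic identity $\psi - \widehat\phi = f_1$ to reconstruct $\psi \in H^1(\mathbb R^2_+)$, feeding the output back into the resolvent analysis sketched after Theorem \ref{domainchar} to produce the full state $(\phi,\psi;w,v) \in \mathscr D(\mathbb A)$.
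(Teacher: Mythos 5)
Your proposal is correct and follows the same Lax--Milgram route as the paper. The paper simply asserts coercivity of $a$ without detail, whereas you supply the key observation that the cross term $2\lambda U(q_x,q)$ vanishes by tangential integration by parts for $q \in H^1(\mathbb R^2_+)$, which is exactly the cancellation the paper relies on implicitly.
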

\begin{proof}
It is direct to show that the bilinear form is continuous and coercive on $V$. In addition, the right hand side is a continuous linear functional on $V$:

\begin{align*}
|\mathcal{L}(\zeta)| & =\big| \langle F, \zeta \rangle_{V'\times V} +\langle G_2, r_{(0,L)} \gamma_0[\zeta] \rangle_{L^2(0,L)} - a(\tilde{G}_1,\zeta)\big| \\
& \leq ||F||_{V'}~||\zeta ||_{V} + ||{G}_1||_{\tilde H^{1/2}(\mathbb{R} \backslash [0,L])}~||\nabla \zeta ||_{L^2(\mathbb R^2_+)}+||G_2||_{L^2(\mathbb R^2_+)}~||\zeta||_{L^2(\mathbb R^2_+)}\\
& \leq C||\zeta||_{V}.
\end{align*}

With these statements established, Lax--Milgram guarantees that there exists a unique $\phi \in V$ that satisfies the variational relation.
\end{proof}
We immediately have the desired corollary.
\begin{corollary}
The function $\tilde{\phi} \equiv \phi + \tilde{G}_1$ satisfies the problem \eqref{phi_xweak}, where $\phi$ is the function we obtained in Corollary \ref{Corollaryphi} and $\tilde{G}_1$ is described by \eqref{Gtilde}.
\end{corollary}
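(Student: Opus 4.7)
The plan is to verify the two conditions of problem \eqref{phi_xweak} directly, using bilinearity of $a$ and the way $\mathcal{L}$ was constructed; everything has already been loaded into Corollary \ref{Corollaryphi} and the trace lifting \eqref{Gtilde}, so this reduces to bookkeeping.

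First I would confirm that $\tilde{\phi} \in H^1(\mathbb R^2_+)$. Since $\phi \in V \subset H^1(\mathbb R^2_+)$ by Corollary \ref{Corollaryphi}, and $\tilde{G}_1 \in H^1(\mathbb R^2_+)$ is an interior extension of $G_1 \in \tilde{H}^{1/2}([0,L]^c)$ guaranteed by the trace theorem, the sum $\tilde{\phi} = \phi + \tilde{G}_1$ lies in $H^1(\mathbb R^2_+)$. For the second requirement, $\tilde{\phi} - \tilde{G}_1 = \phi \in V$, so the membership constraint is immediate. In particular, reading back through \eqref{Gtilde}, we recover the nonhomogeneous Dirichlet trace $r_{(-\infty,0)\cup(L,\infty)} \gamma_0[\tilde{\phi}] = r_{(-\infty,0)\cup(L,\infty)}\gamma_0[\tilde{G}_1] = G_1$, since the trace of $\phi \in V$ vanishes on $[0,L]^c$.

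Next I would verify the variational identity. By bilinearity of $a$ together with Corollary \ref{Corollaryphi} and the definition $\mathcal{L}(\zeta) = \langle L, \zeta \rangle_{V' \times V} - a(\tilde{G}_1, \zeta)$, one computes for every $\zeta \in V$:
$$a(\tilde{\phi}, \zeta) = a(\phi, \zeta) + a(\tilde{G}_1, \zeta) = \mathcal{L}(\zeta) + a(\tilde{G}_1, \zeta) = \langle L, \zeta \rangle_{V' \times V}.$$
This is exactly the identity in \eqref{phi_xweak}, so $\tilde{\phi}$ is a solution.

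There is no real obstacle in this step: the corollary encapsulates the textbook device for mixed Zaremba-type problems with nonhomogeneous Dirichlet data, where one subtracts an $H^1$ extension of the Dirichlet data to homogenize the trace on the Dirichlet portion and then applies coercivity on the closed subspace $V$. All genuine analytical content, namely coercivity and continuity of $a$ on $V$, continuity of the modified functional $\mathcal{L}$, and existence of the lift $\tilde{G}_1$, has already been discharged in Corollary \ref{Corollaryphi} and \eqref{Gtilde}. If uniqueness of $\tilde{\phi}$ is later needed, it follows by noting that the difference of two solutions lies in $V$ and is annihilated by $a(\cdot, \cdot)$ on $V$, hence vanishes by coercivity.
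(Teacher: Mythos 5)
Your verification is correct and is exactly the bookkeeping the paper suppresses when it says ``We immediately have the desired corollary'': the membership $\tilde\phi-\tilde G_1=\phi\in V$ is built in, and bilinearity of $a$ together with the definition $\mathcal L(\zeta)=\langle L,\zeta\rangle-a(\tilde G_1,\zeta)$ gives $a(\tilde\phi,\zeta)=\langle L,\zeta\rangle$ for all $\zeta\in V$. No discrepancy with the paper's (implicit) argument.
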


This above result yields a (weak) solution for $\widehat \phi$ in \eqref{phi_xstrong}:
\begin{equation} \label{phihat}
\widehat \phi = \lambda \phi +U\partial_x\phi \in H^1(\mathbb R^2_+).
\end{equation}
From here, we can immediately define $\psi \equiv f_1 + \hat \phi \in H^1(\mathbb R^2_+)$ from \eqref{firstflow**}. With this information in hand,  solving and acquiring the desired regularity for $w$ and $v$ is direct. Indeed, with $\psi \in H^1(\mathbb R^2_+)$ (and so $r_{(0,L)}\gamma_0[\psi] \in H^{1/2}(0,L)$), we can directly solve \eqref{secondbeam**} for $w$; with $w$ in hand, we algebraically obtain $v$. It remains to recover the variable $\phi \in H^1(\mathbb R^2_+)$. 

Having obtained $\psi \in H^1(\realstwo_+)$, we have $\phi_x \in L^2(\mathbb R^2_+)$. We then specify that $\Delta \phi = f_2+\psi_x \in L^2(\mathbb R^2_+)$. We must use this information to obtain a viable expression for $\phi$. We will produce a particular antiderivative for $\phi$ coming from the differential equation
$$D_{\lambda}\phi =\psi - f_1 \in H^1(\realstwo_+).$$ Indeed we will choose an antiderivative that ensures that $\phi$ decays at $|x|\to \infty$, permitting integration by parts tangentially. With the ability to perform integration by parts, we will be able to show $\phi$, and subsequently $\phi_z$, are in $L^2(\mathbb R^2_+)$; finally, once $\phi \in L^2(\mathbb R^2_+)$, we will immediately recover that $\phi_x \in L^2(\mathbb R^2_+)$.

\begin{lemma} \label{Lemma2.4}
Let $\widehat \phi \in H^1(\mathbb R^2_+)$ be a weak solution as in \eqref{phi_xstrong}. Then there exists $\phi \in H^1(\mathbb R^2_+)$ satisfying $\widehat \phi = \lambda \phi +U\partial_x\phi \in H^1(\mathbb R^2_+).$
\end{lemma}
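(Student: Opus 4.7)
The plan is to read the relation $\lambda\phi + U\phi_x = \widehat\phi$ as a first-order linear ODE in the tangential variable $x$ (with $z$ a parameter) and to select the unique antiderivative that yields an $H^1(\mathbb R^2_+)$ element. Without loss of generality assume $U>0$; the case $U<0$ follows by the reflection $x\mapsto -x$, and $U=0$ trivially gives $\phi = \widehat\phi/\lambda$. The homogeneous solutions are $C(z)e^{-\lambda x/U}$; the task is to find a particular solution that decays (in the $L^2$ sense) at both $\pm\infty$ in $x$.

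First I would define
\begin{equation*}
\phi(x,z) \equiv \frac{1}{U}\int_{-\infty}^x e^{-\lambda(x-s)/U}\,\widehat\phi(s,z)\,ds = (K *_x \widehat\phi)(x,z),
\end{equation*}
where the kernel $K(t) = \frac{1}{U}e^{-\lambda t/U}\mathbf{1}_{t\ge 0}$ lies in $L^1(\mathbb R)$ with $\|K\|_{L^1(\mathbb R)}=1/\lambda$, and the convolution acts only in the tangential variable. Slice-wise Young's inequality in $x$ combined with Fubini yields
\begin{equation*}
\|\phi\|_{L^2(\mathbb R^2_+)} \le \lambda^{-1}\|\widehat\phi\|_{L^2(\mathbb R^2_+)}.
\end{equation*}
Since tangential convolution commutes with $\partial_z$, and $\widehat\phi_z \in L^2(\mathbb R^2_+)$ (because $\widehat\phi\in H^1(\mathbb R^2_+)$ by Corollary \ref{Corollaryphi} and \eqref{phihat}), the same argument gives $\phi_z = K *_x \widehat\phi_z \in L^2(\mathbb R^2_+)$ with the identical bound. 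For the tangential derivative, the defining ODE provides $U\phi_x = \widehat\phi - \lambda\phi$ a.e.; both summands on the right are in $L^2(\mathbb R^2_+)$, hence $\phi_x \in L^2(\mathbb R^2_+)$ as well. Assembling, $\phi \in H^1(\mathbb R^2_+)$ and the identity $\lambda\phi+U\phi_x = \widehat\phi$ holds in $L^2(\mathbb R^2_+)$, as required.

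The main obstacle---and essentially the only real content---is the choice of integration limit. Integrating from $-\infty$ is forced by the sign $\lambda/U>0$: the opposite choice would pair $\widehat\phi(s,z)$ with the exponentially growing factor $e^{\lambda(s-x)/U}$ as $s\to+\infty$, producing a candidate $\phi$ that can grow tangentially and thus fail $L^2(\mathbb R^2_+)$-integrability. The prescription above pins down the unique particular solution (equivalently, selects the zero homogeneous solution $C(z)e^{-\lambda x/U}$) that delivers $H^1(\mathbb R^2_+)$ membership, so that tangential integration by parts in $x$ for the variables $\phi$, $\phi_x$, and $\psi = \widehat\phi + f_1$ proceeds by density without boundary contributions at $|x|\to\infty$, matching the generalized Green's framework used in Section~\ref{dissipativity}.
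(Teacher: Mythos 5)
Your construction coincides with the paper's: both take the particular antiderivative $\phi(x,z) = \frac{1}{U}\int_{-\infty}^x e^{-\lambda(x-\xi)/U}\,\widehat\phi(\xi,z)\,d\xi$, which is exactly your tangential convolution $K *_x \widehat\phi$. Where you diverge is in the verification of $H^1(\mathbb R^2_+)$ membership, and there your route is genuinely different and more efficient. The paper proceeds in four steps: it first shows, by splitting the integral and estimating tails, that $\phi(x,z) \to 0$ pointwise as $|x|\to\infty$ for a.e.\ $z$; it then uses this decay to justify the tangential integration by parts $(\partial_x\phi,\phi)_{L^2(\mathbb R^2_+)}=0$, whence testing the ODE against $\phi$ gives $\lambda\|\phi\|^2 = (\widehat\phi,\phi)$ and so $\phi \in L^2(\mathbb R^2_+)$; it then recovers $\phi_x$ algebraically and $\phi_z$ by repeating the decay argument for the $z$-differentiated ODE. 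You bypass the entire decay analysis by observing that $K(t) = \frac{1}{U}e^{-\lambda t/U}\mathbf{1}_{t\ge 0} \in L^1(\mathbb R)$ with $\|K\|_{L^1} = 1/\lambda$, so slice-wise Young's inequality and Fubini give $\|\phi\|_{L^2(\mathbb R^2_+)} \le \lambda^{-1}\|\widehat\phi\|_{L^2(\mathbb R^2_+)}$ directly, and the commutation of $K *_x(\cdot)$ with $\partial_z$ handles the normal derivative by the same bound; $\phi_x$ then comes from the ODE exactly as in the paper. Your argument is shorter, gives an explicit constant, and is more robust (it is a generic statement about $L^1$ convolution kernels). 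What the paper's approach buys is the explicit pointwise vanishing of $\phi$ at $|x|\to\infty$, which the authors invoke as motivation for the tangential integration by parts used later in the dissipativity argument, though---as you correctly observe---that integration by parts is already justified by $H^1$ membership via density, so nothing is lost. Your explicit reduction to $U>0$ (with $U<0$ by reflection and $U=0$ trivial) is also a small but welcome clarification; the paper's formula implicitly assumes $U>0$ without saying so.
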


\begin{proof} Take $\widehat \phi \in H^1(\mathbb R^2_+)$ as the weak solution described in \eqref{phi_xstrong}. We set
$$e^{\lambda x}\widehat \phi(x,z)=\dfrac{d}{dx}\left[e^{\lambda x}\phi(x,z)\right],$$
and solve for an unknown $\phi$ a.e. $x$. 
Consider the particular anti-derivative given by:
\begin{align} \label{phiexpression} \phi(x,z)=
 \frac{1}{U} \int_{-\infty}^{x}e^{-\frac{\lambda}{U}(x-\xi)}\widehat \phi(\xi, z) d\xi.
\end{align}

One can directly verify that the given $\phi$ satisfies the differential equation \eqref{phihat}.  Thus, the only thing left to prove for the corollary is that $\phi$ as defined above carries the desired regularity. We proceed in steps for clarity. 
\vskip.1cm
\noindent {\bf Step 1}: We show that, for a.e. $z$ fixed, $\phi$ tends to zero as $|x|\to \infty$.

\vskip.1cm \noindent The expression $\phi(0,z)$ is well defined a.e. $z$ by the convergent integral 
$$ \frac{1}{U} \int_{-\infty}^{0}e^{\frac{\lambda}{U}\xi}\widehat \phi(\xi, z) d\xi,$$ since $\widehat \phi \in H^1(\mathbb R^2_+)$, via Fubini's Theorem. We will utilize the notation $\phi(0,z)$ and consider
\begin{align*}
\lim_{x \to \infty} \phi(x,z)= \lim_{x \to \infty}  \left( e^{-\frac{\lambda}{U} x}\phi(0,z) + \frac{1}{U}\int_{0}^{x}e^{-\frac{\lambda}{U}(x-\xi)}\widehat \phi(\xi, z) d\xi \right).
\end{align*}
We break the integral term and write
\begin{equation} \label{expressionforz}
\phi(x,z)=e^{-x}\phi(0,z) +  \frac{e^{-\frac{\lambda}{U} x}}{U}\int_{0}^{\frac{x}{2}}e^{\frac{\lambda}{U}\xi}\widehat \phi(\xi, z) d\xi+ \frac{e^{-\frac{\lambda}{U} x}}{U}\int_{\frac{x}{2}}^{x}e^{\frac{\lambda}{U}\xi}\widehat \phi(\xi, z) d\xi.
\end{equation}
Squaring \eqref{expressionforz} and using Jensen's inequality yields
$$\phi^2(x,z)\leq e^{-2x}\phi^2(0,z,t)+ \frac{e^{-\frac{2 \lambda}{U} x}}{U}\int_{0}^{\frac{x}{2}}e^{\frac{2 \lambda}{U}\xi}\widehat \phi^2(\xi, z) d\xi + \frac{e^{-\frac{2 \lambda}{U} x}}{U}\int_{\frac{x}{2}}^{x}e^{\frac{2 \lambda}{U}\xi}\widehat \phi^2(\xi, z) d\xi. $$
Since $\phi(0,z)$ is finite for a.e. $z$, the first term above clearly converges to zero as $x \to \infty$. For the remaining two terms bound them as follows:
\begin{align*}
 \frac{e^{-\frac{2 \lambda}{U} x}}{U}\int_{0}^{\frac{x}{2}}e^{\frac{2 \lambda}{U}\xi}\widehat \phi^2(\xi, z) d\xi +\frac{e^{-\frac{2 \lambda}{U} x}}{U}\int_{\frac{x}{2}}^{x}e^{\frac{2 \lambda}{U}\xi}\widehat \phi^2(\xi, z) d\xi
\leq \frac{e^{- \frac{\lambda x}{U}}}{U}||\widehat \phi(\cdot, z)||^2_{H^1(\mathbb R_x)} + \int_{\frac{x}{2}}^{x}\widehat \phi^2(\xi, z) d\xi.
\end{align*}
The first term on the RHS above tends to zero as $x \to \infty$, while the second term corresponds to a tail of an $H^1(\mathbb R)$ function (of course a.e. $z$). Hence we have obtained 
$$\lim_{x \to \infty} \phi(x,z)=0 \hskip1cm a.e.~z.$$

We now consider $x <0$. 
\begin{equation*}
\left(\int_{-\infty}^{x}e^{-\frac{\lambda}{U}(x-\xi)}\widehat \phi(\xi, z) d\xi\right)^2 \leq  \int_{-\infty}^{x} e^{-\frac{2\lambda}{U}(x-\xi)}\widehat \phi^2(\xi, z) d\xi = e^{-\frac{2\lambda}{U}x}\int_{-\infty}^{x} e^{\frac{2U}{\lambda}\xi}\widehat \phi^2(\xi, z) d\xi \le \int_{-\infty}^x \widehat \phi^2(\xi,z)d\xi.
\end{equation*}
As before, the latter represents the tail of a convergent integral, and hence as $x \to -\infty$ we observe that $\phi(x,z) \to 0$. 

\vskip.3cm
\noindent {\bf Step 2}: We show that $\phi \in L^2(\mathbb R^2_+)$.
\vskip.1cm \noindent
Having shown that  $\phi$ has pointwise values that vanish at infinity, we invoke integration by parts (tangentially) in $x$ to note that 
$$U(\partial_x \phi,\phi)_{L^2(\mathbb R^2_+)}=0.$$ Then, by testing $\widehat \phi = \lambda \phi +U\phi_x$ by $\phi$ and integrating over $\mathbb R^2_+$ we have:
\begin{align*}
\lambda ||\phi||_{L^2(\mathbb R^2_+)}^2 = (\widehat \phi, \phi)_{L^2(\mathbb R^2_+)} \le ||\hat \phi||_{L^2(\mathbb R^2_+)}||\phi||_{L^2(\mathbb R^2_+)}, 
\end{align*}
and since $\lambda>0$ we have that $\phi \in L^2(\mathbb R^2_+)$.

\vskip.3cm
\noindent {\bf Step 3}: We observe that $\phi_x \in L^2(\mathbb R^2_+)$.
\vskip.1cm \noindent
Combining the regularity obtained in Step 2 and the fact that $U\phi_x=\widehat \phi-\lambda \phi$, we  deduce immediately that $\phi_x \in L^2(\mathbb R^2_+)$.
\vskip.3cm
\noindent
{\bf Step 4}: We observe that $\phi_z \in L^2(\mathbb R^2_+)$.
\vskip.1cm
\noindent We can differentiate the relation \eqref{phihat} in $z$. This generates the ODE
\begin{equation}\label{ODEforz}
\lambda \phi_z +U\partial_x\phi_z =\hat \phi_z \in L^2(\mathbb R^2_+).
\end{equation}
We can then verify that the solution $\phi_z$ of \eqref{ODEforz} vanishes for $|x|\to \infty$, since the calculations we demonstrated in Step 1 are applicable with any given function in  $L^2(\mathbb R^2_+)$ rather than $H^1(\mathbb R^2_+)$---as we only require the integrability of tails. Hence, repeating the arguments in Step 3, we observe that $\phi_z \in L^2(\mathbb R^2_+)$.

Thus we have shown that with $v \in H^2_*$ given, we obtain $\widehat \phi$ from  \eqref{phihatboundary}, from which we obtain $\phi \in H^1(\mathbb R^2_+)$ as given by \eqref{phiexpression}. Lastly, we define $\psi \equiv f_1+\widehat \phi = f_1 + \lambda \phi + U\phi_x \in H^1(\mathbb R^2_+).$ Whence we can read off 
$$\Delta_{\mu}\phi=f_2+D_{\lambda}\psi \in L^2(\mathbb R^2_+),$$
Then, with $\phi \in H^1(\mathbb R^2_+)$, Greens Theorem is valid for $\phi$, as in \eqref{greenss}. It is then an exercise to read off the boundary conditions for $\phi$ considering both \eqref{phiexpression} and the boundary conditions for \eqref{phihatboundary}. 

Now, given $\psi \in H^1(\mathbb R^2_+)$ one can immediately solve the biharmonic problem
\begin{align}
v-\lambda w &= g_1 \in H^2_* \\
 - \cA w + \gamma_0[\psi] -\lambda w &= g_2 \in L^2(0,L) \label{secondbeam***}
\end{align}
in a standard fashion, producing $w \in \mathscr D(\mathscr A)$ and $v \in H^2_*$.

A standard  linear fixed point argument (iterating $v \mapsto \psi \mapsto v$) will obtain from the above solvers, and the result is an element $y = (\phi, \psi;w,v) \in \mathscr D(\mathbb A)$ which satisfies the resolvent system
\eqref{firstflow**}--\eqref{secondbeam**}, as desired. 
\end{proof}

Given the dissipativity of $\mathbb A$ as given in Section \ref{dissipativity}, and its maximality demonstrated in this section, we have shown that $\mathbb A: \mathscr D(\mathbb A) \subset Y \to Y$ is $m$-dissipative. We conclude the section by stating this as a theorem.
\begin{theorem} \label{GenerationofA}
Let $\bA$ be the operator given by \eqref{op-A}--\eqref{dom-A}. Then, $\bA$ is  $m$-dissipative on $Y$.  Hence, via the Lumer-Philips theorem, it generates a C$_0$-semigroup of contractions.
\end{theorem}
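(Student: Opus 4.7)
The plan is to invoke the Lumer--Phillips theorem, which on a Hilbert space requires two ingredients: (i) dissipativity of $\mathbb{A}$ on $Y$, and (ii) the range condition $\text{range}(\lambda I - \mathbb{A}) = Y$ for some $\lambda > 0$ (equivalently, $m$-dissipativity). Both ingredients have essentially been established in the preceding subsections, so the proof of the theorem itself is a short assembly, with the substantive work packaged in Theorem~\ref{domainchar}, Section~\ref{dissipativity}, and Lemma~\ref{Lemma2.4}.

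First, I would recall the dissipativity calculation from Section~\ref{dissipativity}. The key point is that Theorem~\ref{domainchar} gives us enough regularity on elements of $\mathscr{D}(\mathbb{A})$ — namely $\phi, \phi_x, \psi \in H^1(\mathbb{R}^2_+)$, $\Delta\phi \in L^2(\mathbb{R}^2_+)$, $w \in \mathscr{D}(\mathscr{A})$, $v \in H^2_*$ — to legitimately integrate by parts and invoke the generalized Green's identity \eqref{greenss}. Tangential total-derivative terms vanish by $H^1(\mathbb{R}^2_+)$ membership, and the boundary pairing from Green's theorem collapses because $\gamma_0[\psi] = 0$ on $[0,L]^c$ and $\gamma_1[\phi] = -v$ on $(0,L)$, which exactly cancels against the plate coupling term $(r_{(0,L)}\gamma_0[\psi], v)_{(0,L)}$. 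The outcome $(\mathbb{A}y, y)_Y = 0$ yields dissipativity (in fact, $\pm\mathbb{A}$ are both dissipative).

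Second, I would assemble maximality. Given $\mathcal{F} = (f_1, f_2; g_1, g_2) \in Y$ and fixed $\lambda > 0$, the strategy is a linear fixed point in the trace variable $v \in H^2_*$: for given $v$, solve the flow half of the resolvent system \eqref{firstflow**}--\eqref{secondflow**} with the Neumann datum $\partial_z\widehat{\phi} = D_\lambda v$, then use the resulting $r_{(0,L)}\gamma_0[\psi] \in H^{1/2}(0,L)$ to solve the biharmonic problem for $(w, v)$, producing an updated $v$. The flow step is exactly the content of Corollary~\ref{Corollaryphi} (Lax--Milgram on the Zaremba-type weak form \eqref{phi_xweak}) combined with Lemma~\ref{Lemma2.4}, which reconstructs $\phi \in H^1(\mathbb{R}^2_+)$ from $\widehat{\phi}$ via the decaying antiderivative \eqref{phiexpression}. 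The beam step solves \eqref{secondbeam***} using standard elliptic theory for the clamped-free biharmonic with a zero-order shift, which is coercive for $\lambda > 0$ and yields $w \in \mathscr{D}(\mathscr{A})$, $v \in H^2_*$. The resulting map $v \mapsto v$ is linear and contractive for $\lambda$ sufficiently large by the a priori estimates inherited from Lax--Milgram and the biharmonic solver; a standard Neumann series (or equivalently, a single application after rewriting the two solvers as a composed affine map) produces the fixed point. Then all the domain criteria in \eqref{dom-A} can be read off from the constructed $(\phi, \psi; w, v)$, giving $y \in \mathscr{D}(\mathbb{A})$ with $(\lambda I - \mathbb{A})y = \mathcal{F}$.

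The main obstacle — already overcome in the preceding subsection — is the range/maximality step for the flow component, because of the mixed dynamic boundary conditions and unbounded domain $\mathbb{R}^2_+$. The anticipated difficulty is twofold: setting up a coercive variational formulation on the correct space $V$ (handled by the Savaré-type framework \eqref{bilinearform}), and producing an antiderivative of $\widehat{\phi}$ that genuinely lies in $H^1(\mathbb{R}^2_+)$, which requires the delicate decay argument in Steps~1--4 of Lemma~\ref{Lemma2.4}. With these in hand, the theorem follows by citing Lumer--Phillips \cite{pazy}: a densely defined $m$-dissipative operator on a Hilbert space generates a $C_0$-semigroup of contractions, and density of $\mathscr{D}(\mathbb{A})$ in $Y$ is a routine consequence of the above resolvent construction (since $(\lambda I - \mathbb{A})^{-1}$ is everywhere defined and bounded, its range $\mathscr{D}(\mathbb{A})$ is dense).
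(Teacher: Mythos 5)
Your proposal is correct and follows essentially the same path as the paper: dissipativity via the domain characterization (Theorem~\ref{domainchar}), the generalized Green's identity \eqref{greenss}, and the boundary cancellation in Section~\ref{dissipativity}; maximality via the Lax--Milgram solve of the $\widehat{\phi}$-problem (Corollary~\ref{Corollaryphi}), the antiderivative reconstruction of $\phi$ (Lemma~\ref{Lemma2.4}), the biharmonic solver for $(w,v)$, and an iteration in $v$; then Lumer--Phillips. The one thing you make explicit that the paper leaves tacit is that density of $\mathscr{D}(\mathbb{A})$ comes for free from $m$-dissipativity on a Hilbert space, which is a correct and useful observation.
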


\subsection{Perturbation $\mathbb P$}
We now encode the flow boundary conditions abstractly into our operator representation of the evolution. We follow the approach developed in \cite{KJ}, namely, we return to the full dynamics $${y}_t = (\bA +\bP)y, ~y(0)=y_0 \in Y$$ representing \eqref{flowplate} with $\sigma=1$, and with $\bA$ as defined in \eqref{op-A}--\eqref{dom-A}.

We proceed through several definitions and lemmas which are found in \cite{KJ}. We  work with anisotropic Sobolev spaces in characterizing the dynamic Neumann ``lift" that captures $\mathbb P$:
\begin{equation} \label{anisotropic}
H^{r,s}(D) = \left \{ f \in H^s(D),  \partial_x^rf \in H^s(D) \right \}.
\end{equation}

Now we will define $\bA_0: \mathscr D(\bA_0) \subset Y_{f} \to Y_{f}$, where $Y_{f}$ is given by \eqref{statespace} with norm \eqref{norm}, as
$$\bA_0 \begin{pmatrix}\phi\\\psi\end{pmatrix} \equiv   \begin{pmatrix}- U \phi_x + \psi\\ - U \psi_x + \Delta_{\mu} \phi \end{pmatrix}.$$ 
The operator $\bA_0$ has domain given by  
\begin{equation}\label{dom-A0}
\mathscr D(\bA_0) \equiv \left\{\begin{pmatrix}
\phi\\
\psi
\end{pmatrix}\in Y_{f}\; \left| \begin{array}{l}
- U \phi_x + \psi \in H^1(\realstwo_+),~\\ U \psi_x + \Delta_{\mu} \phi\in L^2 (\realstwo_+), \\
\partial_z \phi =0 \text{ on }~ (0,L) \\
\psi =0 \text{ on } (-\infty,0)\cup (L,\infty). \end{array} \right. \right\},
\end{equation}
where $\Delta_{\mu} \equiv \Delta - \mu I$. 
\begin{remark} We note that the Neumann trace has meaning in the context of $\mathscr D (\mathbb A_0)$, as in the case of Theorem \ref{domainchar}, and hence the definition above is self-consistent. \end{remark}
In this case, the operator $\mathbb A_0$ is skew-adjoint, as shown in \cite{KJ}, with a complementary characterization of the Dirichlet trace presented below.
\begin{lemma} \label{A0skewadj}
The operator $\bA_0$ defined on $Y_f$ above is skew-adjoint. Moreover,  for $(\phi, \psi) \in \cD(\bA_0^*)$,  we have the identification 
 $N^*[ \bA_0^* - I ] \begin{pmatrix} \phi \\  \psi\end{pmatrix} = \gamma_0 [\psi]$ on $(0,L)$.  
\end{lemma}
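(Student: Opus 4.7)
The plan is to establish that $\pm\mathbb A_0$ are both $m$-dissipative on $Y_f$ (so that Stone's theorem yields skew-adjointness), and then separately to carry out the duality calculation that identifies $N^*[\mathbb A_0^*-I]$ as a Dirichlet trace. The setup benefits from the fact that $\mathbb A_0$ is essentially the ``flow-only, homogeneous Neumann'' analog of $\mathbb A$, so most of the heavy lifting has already been done in Sections \ref{dissipativity}--\ref{Lemma2.4}.

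For skew-symmetry, I would compute $(\mathbb A_0 y, \widetilde y)_{Y_f}+(y,\mathbb A_0\widetilde y)_{Y_f}$ for $y=(\phi,\psi), \widetilde y=(\widetilde\phi,\widetilde\psi)\in \mathscr D(\mathbb A_0)$, mirroring the dissipativity calculation for $\mathbb A$ in Section \ref{dissipativity}. The tangential $U\partial_x$ contributions vanish after an $H^1(\realstwo_+)$-integration by parts in $x$, and the only surviving boundary term from Green's identity on $\Delta_\mu\phi$ is
\[
\langle \gamma_1[\phi],\gamma_0[\widetilde\psi]\rangle_{H^{-1/2}(\mathbb R)\times H^{1/2}(\mathbb R)}+\langle \gamma_1[\widetilde\phi],\gamma_0[\psi]\rangle_{H^{-1/2}(\mathbb R)\times H^{1/2}(\mathbb R)}.
\]
Both pairings annihilate because the domain requires $\gamma_1[\phi]=\gamma_1[\widetilde\phi]=0$ on $(0,L)$ while $\gamma_0[\psi]=\gamma_0[\widetilde\psi]=0$ on $(0,L)^c$; the supports are disjoint. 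This gives $\mathbb A_0^*\supset -\mathbb A_0$. Maximality of $\lambda I-\mathbb A_0$ (and of $\lambda I+\mathbb A_0$) follows from the resolvent analysis of Section~\ref{Section2}: the system \eqref{firstflow**}--\eqref{boundaryconditions**} with $v\equiv 0$ and zero beam data reduces exactly to the Lax--Milgram problem on $V$ used in Corollary \ref{Corollaryphi}, after which $\phi$ is reconstructed via the antiderivative formula of Lemma \ref{Lemma2.4}. Running the same argument with $U\mapsto -U$ yields maximality of $-\mathbb A_0$. Lumer--Phillips applied to both signs (equivalently, Stone's theorem) then forces $\mathbb A_0^*=-\mathbb A_0$.

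For the trace identification, introduce the Neumann lift $N:L^2(0,L)\to Y_f$ by $Ng=(u_g,0)$, where $u_g\in H^1(\realstwo_+)$ is the unique weak solution of the mixed elliptic problem
\[
\Delta_\mu u_g = 0 \text{ in }\realstwo_+,\quad \partial_z u_g = g \text{ on }(0,L),\quad u_g=0 \text{ on }(-\infty,0)\cup(L,\infty),
\]
(the lift lands in $\mathscr D(\mathbb A_0)$-compatible regularity up to the Dirichlet trace constraint on the flow component $\psi\equiv 0$). For $(\phi,\psi)\in\mathscr D(\mathbb A_0^*)$ and $g\in L^2(0,L)$, I would compute
\[
(Ng,(\mathbb A_0^*-I)(\phi,\psi))_{Y_f}=((\mathbb A_0-I)Ng,(\phi,\psi))_{Y_f}+\text{boundary contribution},
\]
where the boundary contribution arises from the fact that $Ng\notin\mathscr D(\mathbb A_0)$ (it fails the homogeneous Neumann condition on $(0,L)$). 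Tracking the Green's identity carefully, the only uncanceled boundary term is $(g,\gamma_0[\psi])_{L^2(0,L)}$, coming from pairing the prescribed Neumann datum $g$ against $\gamma_0[\psi]$. Since the interior piece $(\mathbb A_0-I)Ng$ pairs with $(\phi,\psi)\in Y_f$ in an absolutely bounded fashion, extracting the boundary contribution against arbitrary $g$ gives $N^*[\mathbb A_0^*-I](\phi,\psi)=\gamma_0[\psi]$ on $(0,L)$.

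The main obstacle will be making the Green's identity for $(Ng,(\mathbb A_0^*-I)(\phi,\psi))_{Y_f}$ rigorous, since $Ng$ has only $H^1$-regularity with an $L^2$ Neumann datum and one needs the extrapolated Neumann trace $\gamma_1$ from Theorem \ref{domainchar}; the mixed Zaremba nature of the lift problem makes the elliptic regularity non-standard at the corner points $x=0,L$. Once this pairing is justified (appealing to \cite{savare} or analogous elliptic theory for the Zaremba problem), the identification is algebraic and the remaining bookkeeping is straightforward.
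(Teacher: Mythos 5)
Your skew-adjointness half is a reasonable sketch: the dissipativity calculation from Section~\ref{dissipativity} specializes directly to $\bA_0$, with the Green's-identity boundary term $\langle\gamma_1[\phi],\gamma_0[\widetilde\psi]\rangle$ vanishing because $\gamma_1[\phi]=0$ on $(0,L)$ while $\gamma_0[\widetilde\psi]=0$ on $(0,L)^c$; the range condition for $\pm\lambda I-\bA_0$ follows by repeating the resolvent analysis with both signs of $U$, and Stone's theorem closes the loop. The paper merely cites \cite{KJ} for the lemma, so you are supplying details rather than diverging.

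The trace-identification half has a genuine gap: you have silently replaced the paper's $N$ with a classical harmonic-type lift $Ng=(u_g,0)$ satisfying $\Delta_\mu u_g=0$ with Neumann datum $g$ on $(0,L)$ and Dirichlet datum $0$ on $(0,L)^c$. But the $N$ appearing in Lemma~\ref{A0skewadj} is the one defined in \eqref{DefinitionofN}: $Ng=(\phi,\psi)$ solves $(\bA_0-I)Ng=0$, so that $\psi=\phi+U\phi_x$ throughout $\realstwo_+$ (in particular the $\psi$-component is not identically zero), with $\partial_z\phi=g$ on $(0,L)$ and $\psi=0$ only on $(0,L)^c$. The paper explicitly notes that the insight of \cite{KJ} was to prefer this operator-theoretic $N$ over the usual harmonic extension. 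The distinction matters for your duality computation. With the paper's $N$ the interior term $((\bA_0-I)Ng,(\phi,\psi))_{Y_f}$ vanishes identically, and Green's identity for the coupled first-order system leaves exactly $(g,\gamma_0[\psi])_{(0,L)}$. With your $N$ it does not: $(\bA_0-I)Ng=(-U\partial_x u_g-u_g,\,0)$ is not zero, and applying Green to $(\nabla u_g,\nabla w)+\mu(u_g,w)$ with $w=U\phi_x-\psi-\phi$ produces the pairing
\[
\langle -\partial_z u_g,\ \gamma_0[w]\rangle_{\mathbb R}
= -\int_0^L g\,\bigl(U\gamma_0[\phi_x]-\gamma_0[\psi]-\gamma_0[\phi]\bigr)\,dx
 \;-\;\int_{(0,L)^c}\partial_z u_g\,\bigl(U\gamma_0[\phi_x]-\gamma_0[\phi]\bigr)\,dx,
\]
and the terms in $\gamma_0[\phi]$ and $\gamma_0[\phi_x]$ on $(0,L)$ are neither zero nor controlled by the domain of $\bA_0^*$, so they do not cancel. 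Your assertion that ``the only uncanceled boundary term is $(g,\gamma_0[\psi])_{L^2(0,L)}$'' is therefore false with your $N$; the cancellation of the unwanted terms is precisely what the first-order structure of the paper's $N$ (built into the coupled system defining \eqref{DefinitionofN}) is designed to provide.
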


With the mapping $\bA_0$ in hand,  we proceed by defining the flow-Neumann map $N:L^2(\mathbb R) \rightarrow Y_{f}$  as follows:
\begin{equation} \label{DefinitionofN}\begin{pmatrix} 
\phi\\
\psi
\end{pmatrix} = Ng , ~~~\text{ if and only if }~~ -U \phi_x + \psi - \phi  =0~~ \text{ and }~ - U \psi_x + \Delta_{\mu} \phi - \psi  =0~ \text{ in }~   \realstwo_+,$$$$ \text{ with }
 \partial_z \phi = g \text{ on } (0,L)  ~~\text{and } \psi =0 \text{ in } [0,L]^c .
\end{equation}
One of the main insights in \cite{KJ} was the utility of the operator $\bA_0 $ over the the typical harmonic extensions associated to   elliptic operators. From that reference, we note the following regularity for \eqref{DefinitionofN}, which  follows from the strong ellipticity of the operator $\Delta_U=\Delta_{\mu}-U^2\partial^2_x$ in the subsonic case  $0\le |U| <1$.
\begin{lemma} The operator $N$, defined above, satisfies
$$N\in \mathscr{L} \big(H^{1,-1/2}(0,L) \rightarrow H^{1,1}(  \realstwo_+) \times H^{0,1}(  \realstwo_+)\big).$$
\end{lemma}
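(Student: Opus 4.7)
The plan is to exploit the fact that, after eliminating $\psi$, the defining system for $N$ reduces to a strongly elliptic second-order mixed boundary value problem on $\R^2_+$, and then to promote the baseline $H^1$ elliptic regularity of $\phi$ to the anisotropic $H^{1,1}$ space by tangential differentiation.

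\textbf{Step 1: Reduction to a scalar elliptic problem in $\phi$.} From the first relation in \eqref{DefinitionofN}, $\psi = \phi + U\partial_x \phi$. Substituting this into the second relation yields
\begin{equation*}
\Delta_U \phi - 2U\partial_x \phi - \phi = 0 \text{ in }\R^2_+, \qquad \Delta_U = \Delta_{\mu} - U^2\partial_x^2,
\end{equation*}
paired with the mixed-type boundary conditions
$\partial_z \phi = g$ on $(0,L)$ and $\phi + U\partial_x\phi = 0$ on $[0,L]^c$ (the latter coming from $\psi = 0$ off the beam). Because $0 \le |U| < 1$, the operator $\Delta_U$ is uniformly strongly elliptic.

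\textbf{Step 2: Baseline $H^1$ solvability.} I would set up a Lax–Milgram / Fredholm variational problem on a subspace of $H^1(\R^2_+)$ analogous to the space $V$ used in the resolvent analysis of Section~\ref{Section2}, handling the Zaremba-type mixed conditions at $x = 0, L$ as in \cite{savare}. The Neumann data $g \in H^{-1/2}(0,L)$ enters through a bounded linear functional on the trial space via the trace theorem, so strong ellipticity together with the coercivity of the zero-order term yields a unique $\phi \in H^1(\R^2_+)$ with continuity $\|\phi\|_{H^1(\R^2_+)} \lesssim \|g\|_{H^{-1/2}(0,L)}$.

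\textbf{Step 3: Tangential regularity via difference quotients.} To obtain $\partial_x \phi \in H^1(\R^2_+)$, I would apply the standard difference-quotient technique in the tangential variable $x$. The key point is that both the elliptic operator $\Delta_U$ and the structure of the two boundary conditions are translation invariant in $x$ (on their respective sides of the boundary); differentiating the data in $x$ produces an analogous problem for $\partial_x \phi$ with Neumann datum $\partial_x g$. Provided $\partial_x g \in H^{-1/2}(0,L)$, i.e., $g \in H^{1,-1/2}(0,L)$, the estimate from Step 2 applied to the differentiated system yields $\partial_x \phi \in H^1(\R^2_+)$. Combined with Step 2, this gives $\phi \in H^{1,1}(\R^2_+)$. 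Finally, $\psi = \phi + U\partial_x\phi$ inherits $H^1(\R^2_+) = H^{0,1}(\R^2_+)$ regularity by linearity.

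\textbf{Main obstacle.} The subtle point is the tangential differentiation near the interface junctions $x = 0$ and $x = L$, where the boundary condition changes type (Neumann vs.\ oblique/pressure condition). Naively differentiating in $x$ is illegitimate across these transition points, and the mixed problem has reduced elliptic regularity at such Zaremba corners. The rigorous approach is therefore to use interior/tangential difference quotients, exploiting that the differentiated data $\partial_x g$ still lives in the correct trace space $H^{-1/2}(0,L)$—which is precisely why the source space $H^{1,-1/2}(0,L)$ is adopted rather than higher isotropic smoothness. This mirrors the microlocal/anisotropic viewpoint of \cite{KJ}, whose framework one can quote directly once the elliptic setup in Step 1 is identified.
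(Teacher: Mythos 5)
Your Step~1 reduction of the defining system to a scalar second-order problem for $\phi$ is correct, but the boundary condition you extract on $[0,L]^c$, namely $\phi + U\partial_x\phi = 0$, is a \emph{tangential oblique} condition, not a Dirichlet condition. This is not the Zaremba (Dirichlet/Neumann) setting of \cite{savare}: the natural variational space and the coercivity/continuity structure you invoke in Step~2 are not available when the off-structure condition involves the tangential derivative of $\phi$. The Lax--Milgram argument as you sketch it therefore does not apply to the problem for $\phi$.

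The maneuver that makes the problem tractable---used throughout this paper's elliptic analysis (cf.\ \eqref{phi_xstrong} with $\lambda=1$) and in \cite{KJ}---is to derive the equation satisfied by $\psi = \phi + U\partial_x\phi$ itself. Applying $(I+U\partial_x)$ to your scalar equation shows that $\psi$ solves the same strongly elliptic PDE, with $\psi = 0$ on $[0,L]^c$ now a genuine Dirichlet condition and $\partial_z\psi = g + U\partial_x g$ on $(0,L)$ a Neumann condition. This is precisely why the source space is $H^{1,-1/2}(0,L)$: the Neumann datum for $\psi$ requires $\partial_x g \in H^{-1/2}(0,L)$. The mixed problem for $\psi$ then yields $\psi \in H^1(\realstwo_+) = H^{0,1}(\realstwo_+)$ by the standard Zaremba theory (baseline $H^1$ suffices; no higher regularity at the junctions is needed). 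The anisotropic regularity of $\phi$ is then recovered \emph{not} by difference quotients, but by solving the tangential ODE $\phi + U\partial_x\phi = \psi$ with $\psi \in H^1$, exactly as in Lemma~\ref{Lemma2.4}: this gives $\phi \in H^1(\realstwo_+)$ directly, and then $U\partial_x\phi = \psi - \phi \in H^1(\realstwo_+)$, i.e.\ $\phi \in H^{1,1}(\realstwo_+)$. Your Step~3 difference-quotient argument is both unnecessary and, as you yourself flag, obstructed at the transition points $x=0,L$; the ODE route sidesteps that obstruction entirely because the anisotropic lift is encoded algebraically rather than via tangential differentiation of the boundary value problem.
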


With the definitions of $\bA_0$ and $N$, we are now ready to to  represent the dynamics of \eqref{flowplate} with $\sigma=1$ in an operator form  in that encodes the boundary conditions.

 \begin{equation}\label{op-AS}
\bA \begin{pmatrix}
\phi\\
\psi\\ u\\ v
\end{pmatrix}=\begin{pmatrix} \bA_0  \Big[\begin{pmatrix}\phi\\ \psi\end{pmatrix}  - N v \Big]  + Nv 
 \\ v \\ -\cA u + N^* (\bA_0^*  - I )  \begin{pmatrix}\phi\\\psi \end{pmatrix} \end{pmatrix},~~~~
 \bP \begin{pmatrix}
\phi\\
\psi\\ u\\ v
\end{pmatrix}=\begin{pmatrix} -U (\bA_0-I)N u_x 
 \\ 0 \\ 0  \end{pmatrix}.
\end{equation}
Above, we assign meaning to the operator $(\bA_0-I)N$ via duality (in the sense of $[\mathscr D(\mathbb A_0^*)]'$). We can now
readily verify that $\bA+\bP$ formally encodes the dynamics of \eqref{flowplate}; note that we can write
\begin{align*}
(\bA+\bP)y=&\begin{pmatrix} \bA_0\Big[\begin{pmatrix} \phi \\ \psi\end{pmatrix}-N(v+Uu_x)\Big]+N(v+Uu_x) \\ v \\ -\cA+N^*(\bA^*_0-I)\begin{pmatrix}\phi\\\psi \end{pmatrix} \end{pmatrix}.
\end{align*}

\subsection{Generation for the Full Dynamics: $\bA+\bP$}\label{fulldyn}
We focus on  the abstract Cauchy formulation of \eqref{flowplate}, introduced in the previous section.
To obtain semigroup well-posedness of this problem, we will utilize a linear fixed-point argument through the variation of parameters formula. Relabel the action of the operator $\bP$ as
\begin{equation}\label{op-P}
\bP\begin{pmatrix}\phi\\\psi\\w \\v \end{pmatrix}= \bP_{\#}(w)\equiv\begin{pmatrix} -U (\bA_0-I)N w_x \\0\\0\end{pmatrix},
\end{equation}
and introduce the following inhomogeneous problem:
\begin{equation} \label{inhomcauchy}
\dot{y} = \bA y +\bP_\# \tilde {w},~y(0)=y_0,
\end{equation}
for a given $\tilde{w} \in H_*^2$. The action of $\mathbb P$ can be made precise through duality as follows: 

Let $y,z \in Y$ with $z= (\tilde \phi,\tilde \psi;  \tilde w, \tilde v)$. Then
\begin{align}\label{p-y-z}
(\bP y, z)_Y =& (\bP_{\#}(w),z)_Y
= -U\left((\bA_0-I)Nw_x, \begin{pmatrix} \tilde \phi\\\tilde \psi \end{pmatrix}  \right)
=-U\langle \partial_x w, \gamma[\tilde \psi]\rangle_{(0,L)}.
\end{align}
Since $w_x \in H_*^1(0,L)$, we will require that $\gamma[\tilde \psi] \in H_*^{-1+\epsilon}(0,L)$. Thus, the following lemma from \cite{KJ}, proved via microlocal analysis, is crucial for the continuation of the abstract analysis of the dynamics.

 \begin{lemma}[{\bf Flow Trace Regularity}]\label{le:FTR0} 
Consider $\phi$ to be a given weak solution \cite{springer} to the following hyperbolic problem in $\realstwo_+$ with $f \in C([0,T];L^2(0,L))$: \begin{equation}\label{flow1}\begin{cases}
(\partial_t+U\partial_x)^2\phi=\Delta_{\mu} \phi, & \text { in }~ \realstwo_+ \times (0,T),\\
\phi(0)=\phi_0;~~\phi_t(0)=\phi_1,\\
\Dz \phi = f,& \text{ on }~ (0,L) \times (0,T)\\
\gamma_0[\phi_t+U\phi_x]=\gamma_0[\psi]=0, &~~~~~x \in (-\infty,0)\cup (L,\infty),
\end{cases}
\end{equation}
with $0 \le U <1$. Then for any $\epsilon>0$: $$\partial_t\gamma_0[\phi],~~ \partial_x\gamma_0[\phi]  \in L^2(0,T;H^{-1/2-\epsilon}(\reals))~~~~\forall\, T>0.
$$
Moreover,  with $\psi = \phi_t + U \phi_x $ we have
\begin{equation}\label{trace-reg-est-M0}
\int_0^T\|r_{(0,L)}\gamma[\psi](t)\|^2_{H_*^{-1/2 -\epsilon} (0,L)}dt\le C\left(
E_{fl}(0)+
 \int_0^T\| f(t) \|_{L^2(0,L)}^2dt\right).
\end{equation}
\end{lemma}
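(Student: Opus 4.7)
The strategy is to recast the claimed trace estimate in terms of the acceleration potential $\psi = \phi_t + U\phi_x$, since the Kutta-Joukowsky condition is exactly $\gamma_0[\psi]=0$ on $[0,L]^c$ and the impermeability condition differentiated tangentially gives $\partial_z\psi|_{(0,L)} = (\partial_t+U\partial_x)f$. Because $\psi$ satisfies the same linear wave equation $(\partial_t+U\partial_x)^2\psi = \Delta_\mu \psi$ on $\realstwo_+$, the unknown $r_{(0,L)}\gamma_0[\psi]$ is the Dirichlet trace of a Zaremba-type mixed boundary value problem with homogeneous Dirichlet data off $(0,L)$ and prescribed (one order below $L^2$) Neumann data on $(0,L)$. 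Once this Dirichlet trace is controlled, the regularity assertion on $\partial_t\gamma_0[\phi]$ and $\partial_x\gamma_0[\phi]$ over all of $\reals$ follows by combining the identity $\gamma_0[\psi] = \partial_t\gamma_0[\phi] + U\partial_x\gamma_0[\phi]$ with a standard finite-energy estimate that controls $\gamma_0[\phi]$ in $L^2(0,T;H^{1/2-\epsilon}(\reals))$, allowing one to separate $\partial_x\gamma_0[\phi]$ from the combination.

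To treat the mixed problem, I would perform a tangential-temporal Fourier transform in $(t,x)$, reducing the PDE to $\partial_z^2\hat\psi = \sigma^2 \hat\psi$ with symbol $\sigma^2(\xi,\tau) = (1-U^2)\xi^2 - 2U\tau\xi - \tau^2 + \mu$, and selecting the mode decaying as $z\to\infty$ (by an outgoing/causality argument in the hyperbolic region of the symbol). This yields the exterior Dirichlet-to-Neumann identity $\widehat{\partial_z\psi}\big|_{z=0} = -\sigma\,\widehat{\gamma_0[\psi]}$, whose principal part behaves like $-\sqrt{1-U^2}\,|\xi|$ at high tangential frequency by the subsonic hypothesis $|U|<1$. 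Since $\gamma_0[\psi]$ is supported in $[0,L]$ while the Neumann data is supplied on the same interval, the Zaremba problem reduces to the pseudodifferential equation
\begin{equation*}
r_{(0,L)}\,\mathrm{Op}(\sigma)\, h = (\partial_t+U\partial_x)f, \qquad \mathrm{supp}\, h\subset [0,L].
\end{equation*}
Using the factorization $|D_x| = H\partial_x$ with $H$ the Hilbert transform, the principal part of this equation is a finite Hilbert transform on $(0,L)$, which admits Tricomi-type inversion modulo smoother pseudodifferential corrections that are elliptic at high frequency.

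The main obstacle is precisely the endpoint behavior of this finite Hilbert transform: its natural inverse against compactly supported data carries weights of the form $(x(L-x))^{-1/2}$, which precludes an $L^2$ bound at $x=0, L$ and forces a loss of one half derivative globally---this is the source of the $H^{-1/2}$ target regularity, with the additional $\epsilon>0$ absorbing the mismatch between pseudodifferential localization and endpoint behavior. Quantifying the inversion produces
\begin{equation*}
\|r_{(0,L)}\gamma_0[\psi]\|_{L^2(0,T;H_*^{-1/2-\epsilon}(0,L))} \lesssim \|(\partial_t+U\partial_x)f\|_{L^2(0,T;H^{-1}(0,L))} + E_{fl}(0)^{1/2},
\end{equation*}
and the one derivative gained from inverting $\sigma$ cancels the one tangential derivative on the right-hand side, recovering the bound in $\|f\|_{L^2(0,T;L^2(0,L))}$. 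The initial-energy term $E_{fl}(0)$ enters through a standard energy estimate that initializes the Fourier representation and accounts for the free evolution of $(\phi_0,\phi_1)$, completing the stated bound.
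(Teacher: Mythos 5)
Your proposed strategy — passing to the acceleration potential $\psi$, taking the tangential–temporal Fourier transform, reading off the Dirichlet-to-Neumann symbol $\sigma$ for the half-space, identifying the resulting Zaremba problem, and inverting the finite Hilbert transform in the style of Tricomi — is precisely the route of \cite{KJ}, which the paper cites as the proof of this lemma (the paper itself gives no proof here beyond the citation). So the architecture is the right one.

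The gap lies in the final quantitative step. You write that $\|r_{(0,L)}\gamma_0[\psi]\|_{L^2 H_*^{-1/2-\epsilon}}$ is bounded by $\|(\partial_t+U\partial_x)f\|_{L^2 H^{-1}}$ and that ``the one derivative gained from inverting $\sigma$ cancels the one tangential derivative on the right-hand side.'' Interpreted as a symbol estimate, this fails. With $\sigma(\xi,\tau)=\sqrt{\xi^2+\mu-(\tau+U\xi)^2}$, the relation is $\sigma\,\widehat h \approx (\tau+U\xi)\,\widehat f$, so the relevant quotient is $|\tau+U\xi|/|\sigma|$. On the characteristic cone $\xi^2+\mu=(\tau+U\xi)^2$ one has $\sigma\to 0$ while $|\tau+U\xi|=\sqrt{\xi^2+\mu}\ge\sqrt{\mu}>0$, so $|\tau+U\xi|/|\sigma|$ is unbounded there; no polynomial weight $\langle\xi,\tau\rangle^{1/2+\epsilon}$ tames it pointwise. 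Hence the loss of $-1/2-\epsilon$ is not obtained by a free-space multiplier bound plus a ``derivative cancellation.'' It is obtained by exploiting the support constraint $\mathrm{supp}\,h\subset[0,L]$ together with the explicit Söhngen--Tricomi inverse of the finite Hilbert transform (with its $(x(L-x))^{-1/2}$ endpoint weights) and a careful integration of the cone singularity in $\tau$; this is exactly the content of the analysis in \cite{KJ}, and the portion your outline elides is the hard part. As stated, the claimed chain of inequalities $\|h\|_{L^2 H_*^{-1/2-\epsilon}}\lesssim\|(\partial_t+U\partial_x)f\|_{L^2 H^{-1}}\lesssim\|f\|_{L^2 L^2}+E_{fl}(0)^{1/2}$ also does not follow from the symbol picture alone, since $\partial_t f$ is not controlled in $L^2(0,T;H^{-1}(0,L))$ by $\|f\|_{L^2(0,T;L^2(0,L))}$; the cancellation must happen inside the joint $(\tau,\xi)$ Fourier analysis against the degenerating $\sigma$, not after separating space from time.
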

\noindent We note that this ``hidden regularity" result is the another of the central contributions of \cite{KJ}, utilizing the invertibility properties of the finite Hilbert transform, as critically discussed in \cite{bal0}. 

We now consider a semigroup mild solution \cite{pazy}, given by the variation of parameters formula for a given $\overline w$:
\begin{equation}\label{solution1}
y(t)=e^{\bA t}y_0 + \int_0^t e^{\bA(t-s)}\bP_{\#} (\overline{w}(s)) ds,
\end{equation}
and re-expressed \cite{redbook} for $\la\in\R\setminus \{0\}$ as:
\begin{equation}\label{solution2}
y(t)=e^{\bA t}y_0 + (\lambda  -\bA)\int_0^t e^{\bA(t-s)}(\lambda  - \bA)^{-1}\bP_{\#} (\overline{w}(s))  ds,
\end{equation}
taken in the sense of $[\cD(\bA^*)]'=[\cD(\bA)]'$ (via the skew-adjointness of $\bA$); this expression is a semigroup solution in the extrapolated sense, and hence $\mathbb P$ is an unbounded perturbation (as in abstract boundary control \cite{redbook}).
\begin{theorem}
Let $T>0$ be fixed, $y_0 \in Y$ and $\overline w \in C([0,T]; H_*^{2}(0,L))$.
Then  the function \begin{equation}\label{mild-sol}
 \ds y(t)=e^{\bA t}y_0+L(\overline w)(t)\equiv
 e^{\bA t}y_0+\int_0^te^{\bA (t-s)}\bP_{\#}(\overline w(s)) ds
 \end{equation}
 belongs to the class $C([0,T]; Y)$
 and satisfies the estimate
\begin{align}\label{followest}
\sup_{\tau \in [0,t]} ||y(\tau)||_Y \le& ~  ||y_0||_Y + k_T ||\overline w||_{L^2(0,t;H^{2}_*(0,L))},~~~\forall\, t\in [0,T].
\end{align}
\end{theorem}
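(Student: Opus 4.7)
The plan is to handle the two summands in \eqref{mild-sol} separately. By Theorem \ref{GenerationofA}, $\bA$ is $m$-dissipative, so $e^{\bA t}y_0 \in C([0,T];Y)$ with $\|e^{\bA t}y_0\|_Y \le \|y_0\|_Y$; the substantive work concerns the convolution $L(\overline w)$. Since $\bP_\#$ is unbounded into $Y$, taking values only in the extrapolated space $[\mathscr D(\bA^*)]'$, I interpret $L(\overline w)(t)$ through the equivalent formula \eqref{solution2}, or, dually, through its action on test elements $z \in Y$:
\begin{equation*}
(L(\overline w)(t), z)_Y = \int_0^t \big(\bP_\# \overline w(s),\, e^{\bA^*(t-s)}z\big)_{[\mathscr D(\bA^*)]' \times \mathscr D(\bA^*)}\, ds.
\end{equation*}
Employing the duality identity \eqref{p-y-z} with $e^{\bA^*(t-s)}z \equiv (\tilde\phi, \tilde\psi; \tilde w, \tilde v)(s)$ collapses this to
\begin{equation*}
(L(\overline w)(t), z)_Y = -U \int_0^t \big\langle \partial_x \overline w(s),\, \gamma_0[\tilde\psi(s)]\big\rangle_{(0,L)}\, ds.
\end{equation*}

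The next step is to control the adjoint trace $\gamma_0[\tilde\psi(s)]$. By the skew-adjointness of $\bA_0$ (Lemma \ref{A0skewadj}) and the block structure of $\bA$ in \eqref{op-AS}, the $\tilde\psi$-component of the adjoint evolution satisfies a hyperbolic flow problem of the form \eqref{flow1}, with Neumann data $\partial_z \tilde\phi = f$ on $(0,L)$ sourced by the adjoint beam-velocity $\tilde v$, while the dynamic KJC $\gamma_0[\tilde\psi]=0$ is preserved off $[0,L]$. Since $e^{\bA^*\tau}$ is contractive on $Y$, $\|f(s)\|_{L^2(0,L)} \le \|z\|_Y$ uniformly in $s \in [0,T]$. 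The hidden trace regularity of Lemma \ref{le:FTR0} then yields, for any $\epsilon>0$,
\begin{equation*}
\int_0^t \|r_{(0,L)}\gamma_0[\tilde\psi(s)]\|_{H^{-1/2-\epsilon}_*(0,L)}^2\, ds \le C_T\, \|z\|_Y^2.
\end{equation*}

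Since $\overline w(s) \in H^2_*(0,L)$ gives $\partial_x \overline w(s) \in H^1_*(0,L) \hookrightarrow H_*^{1/2+\epsilon}(0,L)$, the pairing above is licit in the $H_*^{1/2+\epsilon}$--$H_*^{-1/2-\epsilon}$ duality, and Cauchy--Schwarz in time then yields
\begin{equation*}
|(L(\overline w)(t), z)_Y| \le |U|\, C_T\, \|\overline w\|_{L^2(0,t; H^2_*(0,L))}\, \|z\|_Y,
\end{equation*}
from which \eqref{followest} follows by taking the supremum over $\|z\|_Y \le 1$. For the class membership $y \in C([0,T];Y)$, I would first establish continuity for smooth data $\overline w \in C^1([0,T]; H^2_*)$ via standard continuity properties of the extrapolated variation-of-parameters formula, and then pass to general $\overline w \in C([0,T]; H^2_*)$ by density using the uniform bound just derived. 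The anticipated main obstacle is the explicit identification of the adjoint dynamics: one must read off $\bA^*$ from \eqref{op-AS} and confirm that both the KJC and the Neumann beam--flow coupling are inherited, up to appropriate sign changes in the transport directions, so that Lemma \ref{le:FTR0} is genuinely applicable to $\tilde\psi$.
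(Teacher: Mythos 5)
Your proposal takes essentially the same route as the paper, just with the abstract semigroup--convolution framework of \cite{redbook} unwound into a hands-on duality computation: the paper's Step~3 estimate on $\|\bP^*_\# e^{\bA t}y_T\|^2_{L^2(0,T;H^{-2}(0,L))}$ is exactly your trace estimate on $r_{(0,L)}\gamma_0[\tilde\psi]$ for the adjoint evolution, and the paper's Step~4 (the convolution framework) is exactly your Cauchy--Schwarz duality pairing followed by a sup over $\|z\|_Y\le 1$. Your treatment of the continuity by first working with $\overline w\in C^1(0,T;H^2_*)$ and then passing by density is also precisely the paper's Step~2.

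The one spot where you have flagged a gap rather than closed it is the ``identification of the adjoint dynamics.'' The paper disposes of this cleanly: the dissipativity calculation in Section~\ref{dissipativity} gives $(\bA y,y)_Y=0$ for all $y\in\mathscr D(\bA)$, and both $\pm\bA$ are shown to be $m$-dissipative, so $\bA$ is skew-adjoint (the paper states this outright in Step~3 of the proof). Hence $e^{\bA^*(t-s)}z=e^{-\bA(t-s)}z$ is merely the time-reversed flow of the same system, and the $\tilde\psi$-component therefore satisfies the hyperbolic problem \eqref{flow1} with $U$ replaced by $-U$ and Neumann data sourced by the (adjoint) beam velocity $\tilde v$; Lemma~\ref{le:FTR0} applies directly (after a reflection in $x$ to accommodate the sign of $U$, which costs nothing for $|U|<1$). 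You should also note that the duality pairing $\big(\bP_\#\overline w(s),\,e^{\bA^*(t-s)}z\big)_{[\mathscr D(\bA^*)]'\times\mathscr D(\bA^*)}$ only makes literal sense for $z\in\mathscr D(\bA^*)$, so the final sup should be taken over the dense set $\{z\in\mathscr D(\bA^*):\|z\|_Y\le 1\}$ and the bound extended by continuity. With those two observations supplied, your argument is a correct and more explicit version of the paper's.
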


The above theorem emphasizes the fact that the perturbation $\bP$ acting outside of $Y$ is regularized when incorporated into the operator $L$ in \eqref{mild-sol}; namely the operator $L$ is a priori  continuous from $L^2(0,T;H^2_*)$ to $C(0,T;[\cD(\bA ^*)]')$, however the ``hidden'' regularity of the trace of $\psi$ for solutions to \eqref{flow1} allows us to bootstrap $L$ to be continuous from $L^2(0,T;H^2_*)$ to $C(0,T;Y)$, with the corresponding estimate. We will invoke the results specifically presented in \cite[pp.645--653]{redbook} for Abstract Semigroup Convolution. For completeness, we include an outline below---for the complete proof, see \cite[Section 4.2]{KJ}.
\begin{proof}[Proof Outline] 

\noindent {\bf Step 1:} The operator $\bP_{\#}$, as defined by \eqref{op-P}, satisfies the following inequality for $\la\in\R$, $\la\neq 0$:
  \begin{equation}\label{y-to-z2}
   ||(\la-\bA)^{-1}\bP_{\#}( w)||_{Y}\le C_{U,\la} \|w\|_{H^2(\Om)}, ~~~\forall\, u\in H^2_*(\Om).
 \end{equation}
Here, $(\la-\bA)^{-1}\,  : [\cD(\bA)]'\mapsto Y$ extends to a mapping  from $Y$ to  $[\cD(\bA)]'$. This admits the abstract semigroup convolution framework.
\vskip.1cm
\noindent {\bf Step 2:} We  then obtain the following proposition. 
\begin{proposition} \label{weak-eq}
Let $\overline{w} \in C^1(0,T;H^{2}_*)$ and $y_0\in Y$. Then $y(t)$
given by \eqref{solution1} belongs to  $C([0,T];Y)$ and
is a strong solution to \eqref{inhomcauchy} in $[\cD(\bA)]'$, i.e.
in addition we have that
\[
y\in  C^1((0,T);[\cD(\bA)]')
\]
and  \eqref{inhomcauchy} holds in $[\cD(\bA)]'$ for each $t\in (0,T)$.
\end{proposition}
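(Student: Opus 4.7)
The membership $y \in C([0,T]; Y)$ is given by the preceding theorem, so my focus will be on the strong-solution property: proving $y \in C^1((0,T); [\cD(\bA)]')$ and that \eqref{inhomcauchy} holds pointwise in $[\cD(\bA)]'$. The plan is to exploit the $C^1$-regularity of $\overline{w}$ to lift everything into classical Hilbert-space semigroup theory on $Y$, differentiate the resolvent-smoothed convolution once on $Y$, and then apply the closed operator $(\lambda - \bA)$ a single time to recover the original equation in the extrapolated space.

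First, starting from the extrapolated representation \eqref{solution2}, Step 1 guarantees that $(\lambda - \bA)^{-1} \bP_{\#} : H^2_*(0,L) \to Y$ is bounded. Combined with $\overline{w} \in C^1([0,T]; H^2_*)$, the function $f(s) \equiv (\lambda - \bA)^{-1} \bP_{\#}(\overline{w}(s))$ lies in $C^1([0,T]; Y)$. By classical semigroup theory \cite{pazy}, the convolution
\[
I(t) \equiv \int_0^t e^{\bA(t-s)} f(s)\, ds
\]
then belongs to $C([0,T]; \cD(\bA)) \cap C^1([0,T]; Y)$, with $\dot{I}(t) = \bA I(t) + f(t)$ in $Y$.

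Next, I would differentiate the identity $y(t) = e^{\bA t} y_0 + (\lambda - \bA) I(t)$ in $t$. Since $I(t) \in \cD(\bA)$ pointwise, $(\lambda - \bA) I(t) \in Y$, and formally
\[
\dot y(t) = \bA e^{\bA t} y_0 + (\lambda - \bA) \dot{I}(t) = \bA e^{\bA t} y_0 + (\lambda - \bA) \bA I(t) + \bP_{\#}(\overline{w}(t)).
\]
Because $y_0$ lies only in $Y$, the term $\bA e^{\bA t} y_0$ must be interpreted via the extrapolated extension $\bA : Y \to [\cD(\bA)]'$, which dictates where the identity lives. Commuting $(\lambda - \bA)$ and $\bA$---valid on $\cD(\bA^2)$ and extending by closedness---yields $(\lambda - \bA) \bA I(t) = \bA (\lambda - \bA) I(t)$; combining with $\bA e^{\bA t} y_0$ then produces $\bA y(t)$ in $[\cD(\bA)]'$, giving
\[
\dot y(t) = \bA y(t) + \bP_{\#}(\overline{w}(t)) \qquad \text{in } [\cD(\bA)]'
\]
pointwise in $t \in (0,T)$. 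The continuity $\dot y \in C((0,T); [\cD(\bA)]')$ then follows since each summand has that continuity.

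The hard part will be the bookkeeping in the third step: tracking which operator identities are native to $\cD(\bA)$, which require a dense-subspace extension, and which already live in the extrapolated space. In particular, the commutation of $(\lambda - \bA)$ and $\bA$ on the trajectory $I(t)$---which is in $\cD(\bA)$ but not a priori in $\cD(\bA^2)$---must be justified via closedness and approximation by $C^1([0,T]; \cD(\bA))$ inputs. Once that is handled, all other ingredients (Step 1, the $C^1$ lift through $(\lambda-\bA)^{-1}\bP_{\#}$, and the classical Cauchy-problem theory in \cite{pazy}) are routine and already in place.
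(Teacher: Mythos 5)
Your proposal is correct and follows the standard extrapolated-semigroup route that the paper itself points to (it cites \cite[Section~4.2]{KJ} and the abstract convolution framework of \cite{redbook} rather than spelling out the argument). The three steps---bounded lift of $\bP_\#$ through $(\lambda-\bA)^{-1}$, classical $C^1$-forcing regularity for the $Y$-valued convolution $I(t)$, then application of $(\lambda - \bA)$ to recover \eqref{solution1}---are exactly the right structure.

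One small remark: the step you flag as the ``hard part'' (commuting $(\lambda-\bA)$ and $\bA$ on $I(t)\in\cD(\bA)$) does not in fact require a density/closedness argument through $\cD(\bA^2)$. Both $(\lambda-\bA)\bA I$ and $\bA(\lambda-\bA)I$ are well defined in $[\cD(\bA)]'$ as soon as $I\in\cD(\bA)$, and one can check they coincide by testing against $z\in\cD(\bA)$: since $\bA$ is skew-adjoint, $\langle \bA I, z\rangle_Y = -\langle I,\bA z\rangle_Y$, and the duality characterization of the extrapolated operator gives
\[
\langle (\lambda-\bA)\bA I, z\rangle = \lambda\langle \bA I,z\rangle + \langle\bA I,\bA z\rangle = -\lambda\langle I,\bA z\rangle + \langle\bA I,\bA z\rangle = \langle \bA(\lambda-\bA)I, z\rangle.
\]
So the commutation is immediate from the definitions, and the remainder of your argument closes cleanly: continuity of $\dot y$ in $[\cD(\bA)]'$ follows since $\bA e^{\bA t}y_0 = e^{\bA t}\bA y_0$ (extended semigroup on $[\cD(\bA)]'$), $\bA(\lambda-\bA)I\in C([0,T];[\cD(\bA)]')$ via $I\in C([0,T];\cD(\bA))$, and $\bP_\#(\overline w) = (\lambda-\bA)f\in C([0,T];[\cD(\bA)]')$.
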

\noindent {\bf Step 3:} Let $y_T=e^{\bA^*T}y=e^{-\bA T}y$ (since $\bA$ is a skew-adjoint). Then obtain an estimate of the form
 \begin{align*}
||\bP^*_\#e^{\bA t}y_T||^2_{L^2(0,T:H^{-2}(0,L))} \le C_{T}\|y\|^2_Y.
\end{align*}
The estimate \eqref{trace-reg-est-M0} is crucial in proving the inequality of this step.
\vskip.2cm
\noindent {\bf Step 4:} The convolution framework then yields the estimate \eqref{followest}.
\end{proof}

Now, let  $\bX_t = C\big([0,t];Y\big).$
Now, take $\overline{y}=(\overline{\phi},\overline{\psi};\overline{w},\overline{v}) \in \bX_t$ and $y_0 \in Y$, and introduce the map $\cF: \overline{y} \to y$ given by
\begin{equation*}
y(t) = e^{\bA t}y_0+L(\overline w)(t),
\end{equation*}
i.e. $y$ solves \begin{equation*}
y_t=\bA y+\bP_\# \overline w,
~~ y(0)=y_0,
\end{equation*}
in the semigroup sense, where $\bP_\#$ is defined in \eqref{op-P}.
 It follows from (\ref{followest})
 that for $\overline y_1, \overline y_2 \in \bX_t$
 \begin{align*}
 \|\cF \overline y_1 - \cF \overline y_1\|_{\bX_t}\le & ~k_T ||\overline w_1 - \overline w_2||_{L^2(0,t;H^{2}_*(0,L))} \\
 \le & ~k_T \sqrt{t}\sup_{\tau \in [0,t]}|| \overline w_1 - \overline w_2||_{H^2(0,L)}
 \le k_T \sqrt{t} ||\overline y_1 - \overline y_2 ||_{\bX_t}.
 \end{align*}
Hence there is $0<t_*<T$ and $q<1$ such that
\[
\|\cF \overline y_1 - \cF \overline y_2\|_{\bX_t}\le q \| \overline y_1 -  \overline y_2\|_{\bX_t}
\]
for every $t\in [0,t_*]$.
This implies that  on the interval $[0,t_*]$ the problem
\begin{equation*}
y_t=\bA y+\bP y, ~~t>0,~
~~ y(0)=y_0,
\end{equation*}
has a local-in-time semigroup solution defined  in $Y$. This above local solution
can be extended to a global solution in finitely many intervals via the linearity of the problem.
Thus there exists a unique function
$y=(\phi,\psi;w,v)\in C\big([0,T];Y\big)$ for any $T>0$ such that
\begin{equation}\label{eq-fin}
y(t)=e^{\bA t}y_0+\int_0^te^{\bA(t-s)}\bP (y(s))ds~~\mbox{ in }~ Y
~~\mbox{for all }~t>0.
\end{equation}
It also follows from the analysis above that
\[
\|y(t)\|_Y\le C_T \|y_0\|_Y,~~~ t\in [0,T],~~\forall\, T>0.
\]
 This unique solution $y(t)$ \eqref{eq-fin} a posteriori gives the strongly continuous ``solution" semigroup 
$\widehat{T}(t)$ on $Y$ \cite{pazy}. It is then standard \cite{supersonic,KJ} to identify
the generator $\widehat{\bA}$ of $\widehat{T}(t)$  with the form
\begin{equation}\label{def-bA-n}
\widehat{\bA}z=\bA z+\bP z,~~z\in\cD(\widehat{\bA})=\left\{z\in Y\,:\; \bA z+\bP z\in Y\right\}.
\end{equation}
Hence, the semigroup trajectory $e^{\widehat \bA t}y_0$ is a semigroup (mild) solution for $y_0 \in Y$ (resp. a classical solution for $y_0 \in \cD(\widehat \bA)$) to \eqref{flowplate} with $\sigma=1$ on $[0,T]$ for all $T>0$. Moreover, we can write
\begin{equation}\label{dom-bA-n}
\cD(\bA + \bP) \equiv \left\{ y \in Y\; \left| \begin{array}{l}
-U \Dx \phi + \psi \in H^1(\R^2_+),~\\ -U \Dx \psi  -\bA_0 (\phi - N (v + U \Dx w ))+N(v+U\Dx w) \in L^2(\R^2_+) \\
v \in \cD(\cA^{1/2} )= H_*^2,~
-\cA w + N^* (\bA_0^*+I) \psi \in L^2(0,L) \end{array} \right. \right\}
\end{equation}
A posteriori we note that $\mathscr D(\mathbb A)$ and $\mathscr D(\widehat{\mathbb A})$ are topologically equivalent, and hence choosing data in $\mathscr D(\mathbb A)$ is sufficient to produce strong solutions for the full Cauchy problem \eqref{flowplate0}. 

{\bf Summarizing}: We consider $\widehat {\mathbb  A}$ as a semigroup generator for \eqref{flowplate} with $\sigma=1$ on $Y$, with $\mu>0$, with $\mathscr D(\widehat \bA) = \mathscr D(\bA)$. This is to say, for $y_0 \in \mathscr D(\bA)$ we have {\em strong solutions} to \eqref{flowplate} in the sense of Section \ref{solutions}.

\section{Nonlinear System} \label{Section3}
With the linear theory of strong solutions  now established (via the agency of the semigroup generator  $\widehat {\mathbb  A}$), we can incorporate the physically-derived, nonlinear stiffness effects associated with the large deflections of cantilevered beams. We recall the structural nonlinearity, as derived in \cite{inext1,maria2}, taking the form:
$$F(w) =- D\partial_x\big[(w_{xx})^2w_x \big]+D\partial_{xx}\big[w_{xx}(w_x)^2\big] =D \left [ w_{xx}^3+4w_xw_{xx}w_{xxx}+ w_x^2 \partial^4_x w \right].$$
The nonlinearity is quasilinear in nature, and cannot straightforwardly be treated as a perturbation.

To date, flow-beam and flow-plate interaction studies have utilized structural nonlinearities corresponding to restoring forces which are locally Lipschitz on the finite energy space---those coming from the von Karman (or related) theories \cite{survey1,supersonic,book}. The quasilinear nature of the present nonlinearity, however, is particularly challenging to incorporate into a semigroup framework, {\em hence marking a fundamental departure from the standard analysis of flow-plate interactions}. We will follow a multi-step approach,  briefly outlined below. As pointed out in the previous section, the domains $\mathscr D(\widehat{\mathbb A})$ and $\mathscr D(\mathbb A)$ coincide, so for simplicity we  henceforth just write $\mathscr D(\mathbb A)$.

\begin{enumerate}
\item We begin with the linear system $y_t = \widehat{\bA}y$, with initial data $y_0 \in \mathscr D(\bA)$. 
Since the beam nonlinearity $F(w)$ is not an energy-level perturbation, we introduce strong structural damping of Kelvin-Voigt type to the beam equation; the beam equations sees the addition of the ``regularizing" term $\delta \partial_x^4 w_t$. 
This produces the damped linear system $y_t = \widehat{\bA}_\delta y$. 
A key feature (see \cite{chen-triggiani}) is that $\mathscr D(\bA) $ is strictly contained in $\mathscr D(\bA_\delta)$. 
In this  framework, the nonlinearity $F(\tilde w)$ (for $\tilde w$ given) can be incorporated as a perturbation via the variation of parameters formula, for the system
$y_t = \widehat{\bA}_\delta y + \mathcal F(\tilde y).$

\item We then construct a fixed point in a particular subset of $L^2(0,T_\delta;\mathscr D(\bA_\delta))$, with the initial data taken from the smaller space $\mathscr D(\bA)$. 
To control the nonlinear terms, we impose size restrictions on the beam variables, encoded into the fixed point space. 
The strong damping with $\delta > 0$ plays a critical role, ensuring that the relevant estimates close to give that the fixed point mapping $\tilde y \mapsto y$ is well-defined and contractive. 
The construction of the solution, including the time of existence, $T_\delta$, depend explicitly on the regularization parameter $\delta$. 
Consequently, the fixed point provides a local-in-time strong solution on $(0,T_\delta)$.

\item Next, we track the dependence on $\delta$. 
We insert $y_\delta$ into the regularized system and re-establish energy estimates, now in the form of a priori bounds on smooth solutions. 
This step is delicate: the natural beam test functions used for the $\delta$-problem interact nontrivially with the coupling to the flow dynamics. 
To overcome this, we harness the power of the already established linear semigroup, differentiating the coupled system and carefully estimating higher time derivatives of the beam. 
In this framework, cancellations appear (as in the pure beam case \cite{maria2} and in aeroelastic problems with traces), allowing us to obtain good a priori estimates in $\mathscr D(\bA_\delta)$ {\em which are uniform in $\delta$.} 
Since $\mathscr D(\bA_\delta)$ itself does not directly control higher spatial derivatives, we invoke an equipartition argument (as for second-order systems) to subsequently recover those. 
The positivity/variational structure of the beam nonlinearity term is essential here. 
The result is $L^2$-in-time control of higher derivatives, and a priori bounds independent of $\delta \ge 0$.

\item Finally, assuming higher regularity of the initial data in $\mathscr D(\bA)$, we are able to pass to the limit as $\delta \searrow 0$, using the uniform estimates in the previous step. 
This yields a local strong solution to the undamped flow--beam system, first in the restricted $L^2(0,T;\mathscr D(\bA_\delta))$ setting. 
The restriction can then be lifted, giving a solution to the original system without any imposed damping. 
The final is a local strong solution to the full nonlinear flow-beam system, captured in Theorem \ref{th:main2}.
\end{enumerate}

\subsection{Preliminary Estimates for the Independent Beam}\label{beamests}

This section is devoted to the presentation of the critical estimates in the case of an uncoupled, nonlinear beam, driven by a generic force $p$. The details of this analysis come from \cite{maria2}, which deals precisely with the well-posedness of strong beam solutions. The work there provides a  road map for estimating the coupled flow-structure dynamics. Of particular relevance is the phenomenon that, while a priori bounds yield good estimates due to cancellation of the critical nonlinear terms, the actual construction of solutions is problematic; the latter requires consideration of a more regular dynamics with an incremental regularization parameter. Showing that  regularized solutions satisfy a priori estimates which are independent of the regularization parameter is the main thrust of our overall approach. 

We present the key points of the uncoupled beam analysis for self-containedness, but point to \cite[Section 4]{maria2} for additional details.  Recall the beam operator $\mathscr A$ with $\mathscr D(\cA) = \{ w \in H^4(0,L) : w(0) =w_x(0)=0; \; w_{xx}(L)=w_{xxx}(L) = 0\}.$
Consider the nonlinear beam system in \eqref{dowellnon}.
We utilize several notions of beam energies below. The baseline energy is given by:
\begin{equation}\label{energiesdef}
E_0(t) \equiv  \frac{1}{2}\left[||w_t||^2\right]+\dfrac{D}{2}\left[||w_{xx}||^2+ ||w_xw_{xx}||^2\right],
\end{equation}
and is useful for stating the main existence theorem for the uncoupled beam.
\begin{theorem}\label{withoutiota}
Let $p \in H^1((0,T) \times (0,L))$ for any $T>0$. For initial data $(w_0,w_1) \in \mathscr D(\mathscr A)\times H_*^2$, strong beam solutions exist up to some time $T^*(w_0,w_1,p)$. For all $t \in [0,T^*)$, the solution $w$ is unique and obeys the energy identity
$$E(t) = E(0)+\int_0^t (p,w_t)_{L^2(0,L)}d\tau.$$ 
\end{theorem}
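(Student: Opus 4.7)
The plan is to follow the four-step program the authors sketch in Section~\ref{Section3}: introduce a Kelvin--Voigt regularization, build solutions by a fixed-point argument, derive a priori bounds independent of the damping parameter, and pass to the limit. First, I would add $\delta \mathscr{A} w_t$ to the beam equation so that the linear part $w_{tt}+\mathscr{A}w+\delta \mathscr{A}w_t$ generates an analytic semigroup on $H^2_*\times L^2$, with $\mathscr{D}(\mathscr{A}_\delta)\supsetneq \mathscr{D}(\mathscr{A})$. The parabolic-type smoothing of this analytic semigroup allows the quasilinear forcing
$$F(\widetilde w)=D\partial_{xx}[\widetilde w_x^{\,2}\widetilde w_{xx}]-D\partial_x[\widetilde w_{xx}^{\,2}\widetilde w_x]$$
to be incorporated through variation of parameters even though $F$ is of top spatial order. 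A contraction $\widetilde w\mapsto w$ in a ball of $L^\infty(0,T_\delta;\mathscr{D}(\mathscr{A}))$, together with the 1D embedding $H^2_*\hookrightarrow W^{1,\infty}(0,L)$ to bound the coefficients of $F$, then produces a unique strong solution $w_\delta$ of the regularized system on $(0,T_\delta)$.

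The crucial analytic point is to derive a priori bounds that do not depend on $\delta$. Testing by $w_t$ and exploiting the variational structure of the inextensible stiffness, a short integration-by-parts computation (with the clamped--free conditions) shows
$(F(w),w_t)_{L^2(0,L)}=\tfrac{d}{dt}\bigl[\tfrac{D}{2}\|w_xw_{xx}\|^2\bigr]$,
so the baseline identity for $E_0(t)$ holds with the additional dissipation $\delta\int_0^t\|w_{xxt}\|^2\,d\tau$ on the left. To control $w_{tt}$ in $L^\infty_tL^2_x$, I would differentiate the equation in time and test by $w_{tt}$; the dangerous piece is $Dw_x^{\,2}w_{xxxxt}$ contributed by $[\partial_t,F](w)$, and here I would invoke the symmetry-based cancellations identified in \cite[Section 4]{maria2} to convert the top-order contribution into a total time derivative of a nonnegative functional, modulo lower-order terms absorbable by Gronwall. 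The hypothesis $p\in H^1((0,T)\times(0,L))$ is used precisely to provide $p_t\in L^2$ for this higher test. An equipartition argument then reads $\mathscr{A}w=p-w_{tt}-F(w)$ pointwise in $t$ and inverts the coefficient $1+w_x^{\,2}$ using the previously obtained $W^{1,\infty}$ bound on $w_x$, recovering $w\in L^\infty(0,T^*;\mathscr{D}(\mathscr{A}))$ uniformly in $\delta$.

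With uniform estimates in hand, weak-$*$ compactness plus Aubin--Lions (to pass the quasilinear products through the limit) delivers a strong solution of the undamped quasilinear beam on $[0,T^*)$. The energy identity is inherited via lower semicontinuity, together with $\delta\|w^\delta_{xxt}\|^2\to 0$ distributionally as $\delta\searrow 0$. Uniqueness is a standard difference argument: subtract two solutions, test with $(w-\widetilde w)_t$, control the quasilinear differences using the common $L^\infty$ bounds on $w_x,\widetilde w_x$ available in $\mathscr{D}(\mathscr{A})$, and apply Gronwall. The principal obstacle throughout is the quasilinear character of $F$; the term $Dw_x^{\,2}\partial_x^4w$ prevents any treatment of $F$ as an energy-level locally Lipschitz perturbation, and both the variational identity (for the baseline estimate) and the time-differentiated cancellations (for the higher-order estimate) are indispensable. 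The locality of $T^*$ reflects precisely the smallness window needed to keep the inverted quasilinear coefficient $1+w_x^{\,2}$ controllable.
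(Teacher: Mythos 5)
The paper does not actually prove this theorem in detail; it states it and defers the construction to \cite[Section~4]{maria2}, confining itself in Section~3.1 to recording the key a priori estimates that the theorem rests on (the baseline energy $E_0$ with $\|w_x w_{xx}\|^2$, the time-differentiated identity \eqref{energy1} and bound \eqref{NonlinGronw}, and the elliptic read-off \eqref{lastlast}). Your proposal reconstructs essentially the same strategy: Kelvin--Voigt regularization $\delta\partial_x^4 w_t$; a fixed point for the $\delta$-problem using the resulting parabolic-type smoothing; the variational cancellation $(F(w),w_t)=\tfrac{d}{dt}\tfrac{D}{2}\|w_xw_{xx}\|^2$ for the baseline estimate; time-differentiating and testing by $w_{tt}$, with the nonlinear terms reorganizing into energy-level pieces, giving $\delta$-uniform control; equipartition via $\mathscr{A}w = p - w_{tt} - F(w)$ to recover $H^4$; then passing $\delta\searrow 0$. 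This is exactly the program the paper announces (items 1--4 of Section~3) and credits to \cite{maria2}, so your approach is the intended one and your identification of the decisive cancellations is correct.

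Two small points. First, your fixed point lives in a ball of $L^\infty(0,T_\delta;\mathscr{D}(\mathscr{A}))$ at a single level; the paper's template (as deployed for the coupled system in Section~3.3) uses a two-level argument, mapping a strong ball into itself while contracting only in the finite-energy topology. For the uncoupled $\delta$-problem your single-level version does close, because testing the damped linear equation by $\partial_x^4 w_t$ yields $L^\infty_t H^4$ control from an $L^2_{t,x}$ forcing at the cost of a $\delta^{-1}$ constant; but this is a $\delta$-dependent device and would not survive the coupled, undamped setting, which is why the paper opts for the two-topology scheme. Second, obtaining the energy \emph{identity} ``by lower semicontinuity'' is imprecise: lower semicontinuity gives only an inequality. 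Since the limit $w$ is a strong solution (so that all terms in the identity make pointwise sense and $\partial_x^4 w\in L^2$ a.e.\ in $t$), the identity is obtained by testing the limit equation directly with $w_t$, with $\delta\int_0^t\|w^\delta_{xxt}\|^2\,d\tau\to 0$ controlled by the uniform $E_0$ bound rather than invoked distributionally.
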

\begin{remark}\label{rem:T*}
The time of existence $T^*$ depends only on the {\em size of the data} in the sense of
\[
T^* = T^*\big(\|(w_0,w_1)\|_{\mathcal{D}(\mathscr{A})}, \|p\|_{H^1(0,T;L^2(0,L))}\big).
\]
The results in \cite{maria2} require more stringent hypotheses, but they also provide a notion of continuous dependence in $\mathcal{D}(\mathscr{A})$ which is not central here.
\end{remark}

The first energy inequality one can obtain for strong solutions  is:
\begin{equation}
\label{FirstLevel}
E_{0}(t) \leq f_0 \left( ||p||_{L^2\left(0,t;L^2(0,L)\right)}, ||w_0||_{\mathscr D(\mathscr A)},||w_1||_{H^2_{*}} \right) e^{t/2} ~~\text{for all}~~ t>0.
\end{equation}
The function $f_0$ is increasing in its arguments.
Since we assume $p,p_t \in L^2(0,T;L^2(0,L))$, $||p(0)||$ can be interpreted as a temporal trace, with $||p(0)|| \lesssim ||p||_{H^1(0,T;L^2(0,L))}$ and this yields:
\begin{equation}
\label{boundednessofwtt}
||w_{tt}(0)|| \leq f \left( ||p||_{H^1(0,T;L^2(0,L))}, ||w_0||_{\mathscr D(\mathscr A)} \right).
\end{equation}
The higher-order energy identity we utilize is given by:
\begin{align}\label{energy1}
\frac{1}{2} \frac{d}{dt}\big[  ||w_{tt}||^2 +  D||w_{xxt}||^2 &+D ||w_{xx}w_{xt}||^2 +  D ||w_{xxt}w_{x}||^2\big]  \\ \nonumber
=-\dfrac{d}{dt}\Big[4D(w_{x} w_{xx} , & w_{xt}w_{xxt})\Big]+\left( p_{t}, w_{tt} \right) + 3D(w_{xx} w_{xxt} ,w^2_{xt} ) +3 D ( w_{x}w_{xt},w^2_{xxt}) .
\end{align}
Invoking the higher energy definition:
\begin{equation}
\label{secondenerg}
E_{1}(t) = \frac{1}{2} \left[ ||w_{tt}||^2 +  D||w_{xxt}||^2 + D||w_{xt}w_{xx}||^2 + D||w_{x}w_{xxt}||^2 \right],
\end{equation} 
we can then obtain:
\begin{align}
\label{timediff}
E_1(t) \leq& ~ f_1\left(||p_t||_{L^2(0,t;L^2(0,L))}, ||w_0||_{\mathscr D(\mathscr A)}, ||w_1||_{H^2_{*}} \right)  \\ & ~+ f_2\left(||p||_{L^2(0,t;L^2(0,L))},||w_0||_{\mathscr D(\mathscr A)},||w_1||_{H^2_{*}} \right)t  + C \int_0^t \left [ E_1(\tau) \right ] ^2d\tau.\nonumber 
\end{align}
From this follows the central local-in-time estimate:
\begin{equation}
\label{NonlinGronw}
E_1(t) \leq  \frac{ f_1  + f_2t }{1- C \left[ f_1 t + f_2 t^2 \right]} \equiv M_1(t),~ ~~0 \leq t < T^* ~~\text{where}~~ T^* = \sup_{t >0} \left \{ C \left[ f_1 t + f_2 t^2 \right] <1 \right \}. 
\end{equation}
Then, for any fixed $T<T^*$, we have that \eqref{NonlinGronw} constitutes a priori bound on $ E_1(t)<M_1^*(T),~~t \in [0,T]$, where \begin{equation}M_1^*(T) = \max_{t \in [0,T]} M_1(t);\end{equation} this quantity depends only on fixed norms of the data and $T$. 

Finally, we can use the nonlinear beam equation  itself in \eqref{dowellnon} to bound $w$ in $\mathscr D(\mathscr A)$:
\begin{equation}\label{lastlast} ||(1+w_x^2)\partial^4_x w|| \lesssim ||w_{tt}||+||w_{xx}||_{L^{\infty}(0,L)}^2||w_{xx}|| +||w_x||_{L^{\infty}(0,L)}||||w_{xx}||_{L^{\infty}(0,L)}||||w_{xxx}||+||p||\end{equation}
where norms without subscripts are taken to be $L^2(0,L)$. From which it follows that the strong solution has \begin{equation}\label{finaest}(w,w_t,w_{tt}) \in L^{\infty}(0,t;\mathscr D(\mathscr A) \times H^2_* \times L^2(0,L))~\text{ for all }~0 \le t <T^*.\end{equation}

\subsection{Fixed Point Framework for Damped Flow-Plate System}
In line with the guiding idea from the previous section, we first define the regularized model for the full flow-structure interaction. Specifically, we consider \eqref{fullpot*} with $\beta=1$, where the beam equation is augmented with strong damping and modified to:
\begin{equation}  
w_{tt}+ \partial^4_x [Dw+\delta w_t] - D\partial_x\big[(w_{xx})^2w_x \big] +D \partial_{xx}\big[w_{xx}(w_x)^2\big]= r_{(0,L)}\gamma_0  [\phi_t+U\phi_x] . 
\end{equation}

Then, the full system with the  nonlinear beam model takes the form:
\begin{equation}\label{beamFP}
\begin{cases}
(\partial_t + U\partial_x)^2\phi=\Delta_{x,z} \phi & \text{ in } \mathbb{R}_+^2\times (0,T)\\
\partial_z\phi =  (\partial_t+U\partial_x)w & \text { on } (0,L)\\
(\phi_t+U\phi_x) = 0  &\text{ on } (-\infty,0) \cup (L,\infty)\\
\phi(0)=\phi_0,~~  \phi_t(0)=\phi_1
\\
 w_{tt} + D\partial_x^4 w + \delta \partial_x^4 w_t = -F(w)+ r_{(0,L)}\gamma_0  \phi_t+U\phi_x]& \text{ in } (0,L) \times (0,T)\\
w=w_x = 0 & \text { in }\{0\}\times (0,T)\\
w_{xx}=0;~~w_{xxx}=0 & \text{ in }\{L\} \times (0,T)\\
w(0)=w_0,~~  w_t(0)=w_1 \end{cases}
\end{equation}

This modification corresponds to a flow--plate generator $\widehat{\mathbb A}_{\delta}$, adapted from \eqref{def-bA-n}--\eqref{dom-bA-n}, which incorporates strong damping in the beam equation. An important difference, however, is that the structurally damped operator defines a larger domain than in the undamped case. In particular, domain membership requires only $
\partial_x^4 (Dw+\delta w_t) \in L^2(0,L),$
which does not in itself provide direct information about the regularity of $\partial_x^4 w$. In fact, we only have that $w_{xxt} \in L^2(0,T)$. For this reason, special care is required when defining the set on which the construction of solutions will take place.

For the fixed point setup, we replace (linearize) $F(w)$ with $F(\tilde w)$, where $\tilde w$ is appropriately specified. The initial data $y_0=(\phi_0,\psi_0;w_0,w_1) \in \mathscr D(\bA_{\delta})$ are considered fixed. The regularity of the flow  $(\phi, \psi)$ in the domain does not depend on $\delta$, since 
$
\gamma_0[\phi_t+U\phi_x]_{(0,L)} \in C([0,T]; L^2(0,L)).
$
Thus, the problem admits well-defined linear regularized semigroup dynamics, which provides a natural basis for setting up the fixed point argument for the nonlinear system.
Abstractly, the system nonlinearity then takes the form $
\mathcal{F}(\tilde{{y}}) = (0, 0; 0, -F(\tilde w)),
$ 
and we write the linearized system for strong solutions as:
\begin{equation} \label{FP}
{{y}_t} = \widehat{\bA}_\delta {y} + \mathcal{F}(\tilde{{y}}), \quad {y}(0) = {y}_0 \in \mathscr D(\bA_{\delta}),
\end{equation}
where \({y} = (\phi, \psi; w, v)\), and \(\tilde{{y}} = (\tilde{\phi}, \tilde{\psi}; \tilde{w}, \tilde{v})\). Given $\tilde y$, we solve for $y$ and 
this defines the fixed point map, for which we will invoke the Banach contraction principle on an appropriately defined subset of data in $\mathscr D(\bA_{\delta})$. 
Accordingly, consider
\begin{equation}\label{Tmap}
\mathcal{T} : \tilde{{y}} \mapsto {y},
\end{equation} where $\mathcal T(\tilde y) = y$,
given through the variation of parameters mapping
\begin{equation}\label{varpar2}
{y}(t) = e^{\widehat{\bA}_\delta t} {y}_0 + \int_0^t e^{\widehat{\bA}_\delta(t-s)} \mathcal{F}(\tilde{{y}}(s)) \, ds.\end{equation}
With the semigroup generated by $\widehat{\bA}_{\delta}$, the map $\mathcal{T}$ is well-defined (and produces a strong semigroup solution) whenever $\tilde y \in C([0,T];\mathscr D(\bA_{\delta}))$ \cite{pazy}. Since both the initial data and the forcing term belong to the domain of the generator, it follows that ${y}(t) \in C([0,T]; \mathscr D({\bA}_\delta))$ for all $t$.

Our goal is to prove that $\mathcal{T}$ admits a unique fixed point in a suitable closed set $\mathcal{U}\subset \mathscr D({\bA}_\delta)$via the contraction mapping principle on a closed, dynamics-invariant ball of strong solutions, measured in the topology of weak solutions (the so-called finite energy space). Thus we work within the framework of a convex metric space. 
The approach here is two-fold: first, we show that $\mathcal{T}$ maps a ball in a stronger space into itself, and second, we establish the contraction property in a weaker, finite energy space, as is standard  standard for quasilinear PDEs.

\subsection{The Fixed Point Argument}

We define the following spaces that will be used to run the fixed point argument. First, consider for $y=(\phi,\psi;w,v) \in \mathscr D(\bA_{\delta})$:
\[
\mathcal{Z}_T \equiv \left\{ {y} \in C([0,T]; \mathscr D({\bA}_\delta)) \ :\  \int_0^T ||w(t)||_{\mathscr D(\mathscr A)}^2 dt +  \sup_{t \in [0,T]} \left(  \|v(t)\|_{H^2_*}^2 \right) < L \right\},
\]
where \(L > 0\) is a fixed constant. (Here we use Poincar\'e and the boundary conditions to identify the $H^4(0,L)$ on $\mathscr D(\mathscr A)$ with $||\partial_x^4\cdot||_{L^2(0,L)}$.) In addition, we define a weaker topology using the space:
\[
X_T \equiv \left\{ {y} \in C([0,T]; \mathscr D({\bA}_\delta)) \ :\ \sup_{t \in [0,T]} \left( \|w(t)\|_{H^2_*}^2 + \|v(t)\|_{L^2(0,L)}^2 \right) < 4\left (  \|w(0)\|_{H^2_*}^2 + \|v(0)\|_{L^2(0,L)}^2 \right ) \right\}.
\]
We emphasize that in both definitions, the constraints are imposed only on the beam variables \(w\) and \(v\), while the flow variables \((\phi, \psi)\) retain the regularity associated to their membership in \(\mathscr D({\bA}_\delta)\).

In what follows, we will demonstrate that the fixed point map \(\mathcal{T}\) maps  
\begin{equation}\label{definitionofU} 
\mathcal{U} \equiv X_T \cap \mathcal{Z}_T
\end{equation}
into itself for $T$ sufficiently small, and is a contraction with respect to the weaker topology induced by \(X_T\). While the contraction argument is carried out in the space \(X_T\), the additional regularity imposed by \(\mathcal{Z}_T\) ensures that the nonlinear forcing term is well-defined and allows us to derive the necessary a priori estimates.

For clarity, we note that the definitions of $\mathcal{Z}_T$ and $X_T$ above depend on the damping parameter $\delta$ through the underlying domain of the generator for $\widehat{\bA}_{\delta}$. Hence we should write $\mathcal{Z}_{T,\delta}$ and $X_{T,\delta}$. Accordingly, we will denote $\mathcal{U}_\delta \equiv X_{T,\delta} \cap \mathcal{Z}_{T,\delta}$.

\begin{theorem}[Unique Fixed Point for  $\delta$-problem]\label{FPExistence} 
Let $\delta>0$. There exists a time $T_\delta > 0$, sufficiently small, such that, for 
$\mathcal{U}_\delta$, the mapping
\[
\mathcal{T} : \mathcal{U}_\delta \to \mathcal{U}_\delta
\]
admits a unique fixed point. In other words, there exists a unique 
$y_\delta \in \mathcal{U}_\delta$ such that $\mathcal{T}(y_\delta) = y_\delta$.
\end{theorem}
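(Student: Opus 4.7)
The plan is to apply the Banach contraction principle in a two-norm framework: invariance of $\mathcal{T}$ will be verified in the stronger $\mathcal{Z}_{T,\delta}$ topology, while the contraction property will be established in the weaker $X_{T,\delta}$ topology. The Kelvin--Voigt damping $\delta\partial_x^4$ in $\widehat{\bA}_\delta$ plays two essential roles here: it enlarges $\mathscr{D}(\widehat{\bA}_\delta)$ so that the $L^2$-forcing $F(\tilde w)$ produces solutions still in $\mathscr{D}(\widehat{\bA}_\delta)$, and it supplies a parabolic-type smoothing on the beam component that provides the $L^2(0,T;\mathscr{D}(\mathscr{A}))$ regularity encoded in $\mathcal{Z}_{T,\delta}$.

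The first step is to control the forcing: for $\tilde y \in \mathcal{U}_\delta$, I would show $F(\tilde w) \in L^2(0,T;L^2(0,L))$ by exploiting both constraints defining $\mathcal{U}_\delta$. Recalling $F(\tilde w) = D[\tilde w_{xx}^3 + 4 \tilde w_x \tilde w_{xx} \tilde w_{xxx} + \tilde w_x^2 \partial_x^4 \tilde w]$, the lower-order cubic terms are handled pointwise in time via $H^2_\ast \hookrightarrow W^{1,\infty}$ and the $X_{T,\delta}$ bound $\|\tilde w(t)\|_{H^2_\ast} \le 2\|w_0\|_{H^2_\ast}$. The quasilinear term $\tilde w_x^2 \partial_x^4 \tilde w$ is where the $\mathcal{Z}_{T,\delta}$ constraint becomes indispensable: $\tilde w_x$ is bounded in $L^\infty((0,T)\times(0,L))$ via $X_{T,\delta}$, whereas $\partial_x^4 \tilde w$ is only known in $L^2(0,T;L^2)$ via $\mathcal{Z}_{T,\delta}$, yielding exactly the target regularity.

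With forcing control in hand, invariance follows from inserting into \eqref{varpar2} and combining the linear flow-regularity theory of Section \ref{Section2} with the analytic semigroup estimates for the damped beam, which map $L^2(0,T;L^2)$ forcing into $L^2(0,T;\mathscr{D}(\mathscr{A}))$ for $w$ and into $L^\infty(0,T;H^2_\ast)$ for $v$. Inequalities of the schematic form
\begin{equation*}
\int_0^T \|w(t)\|_{\mathscr{D}(\mathscr{A})}^2 dt + \sup_{[0,T]}\|v(t)\|_{H^2_\ast}^2 \le C(\delta,y_0) + C(\delta,L)\, T^\alpha
\end{equation*}
(for some $\alpha > 0$) close the $\mathcal{Z}_{T,\delta}$ bound upon choosing $T_\delta$ sufficiently small. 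The $X_{T,\delta}$ bound is similarly preserved by standard semigroup continuity and smallness of $T_\delta$. For the contraction step, writing the difference $F(\tilde w_1) - F(\tilde w_2)$ as a polynomial expression factored through $\tilde w_1 - \tilde w_2$ and its derivatives and using the common $\mathcal{Z}_{T,\delta}$ bound to absorb top-order factors yields a Lipschitz estimate of the form
\begin{equation*}
\|F(\tilde w_1)-F(\tilde w_2)\|_{L^2(0,T;L^2)} \le C(L,\|w_0\|_{H^2_\ast})\,\sqrt{T}\,\sup_{[0,T]}\|\tilde w_1 - \tilde w_2\|_{H^2_\ast},
\end{equation*}
which propagates through the semigroup to give a strict contraction in $X_{T,\delta}$ for $T_\delta$ further reduced.

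The main obstacle I anticipate is the mismatched topology on $\mathcal{U}_\delta$: the set is cut out by a strong-norm constraint yet we contract in a weaker norm, so the Banach principle does not apply verbatim. I would resolve this by verifying that $\mathcal{U}_\delta$, viewed inside the complete metric space underlying the $X_{T,\delta}$-topology, is closed---either via lower semicontinuity of the $\mathcal{Z}_{T,\delta}$-seminorm under weak-$*$ limits, or by the standard device of iterating $\mathcal{T}$ and passing the $\mathcal{Z}_{T,\delta}$-bound to the $X_{T,\delta}$-limit using Banach--Alaoglu. A secondary point, non-critical at this stage, is that both $T_\delta$ and the prefactors depend explicitly on $\delta$; uniformity in $\delta$ is not required here and will be recovered in the subsequent a priori estimate step outlined in items (3)--(4) following \eqref{FP}.
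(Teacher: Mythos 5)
Your proposal follows the same two-norm Banach fixed-point strategy as the paper: invariance in the stronger $\mathcal{Z}_{T,\delta}$-topology, contraction in the weaker $X_{T,\delta}$-topology, with the Kelvin--Voigt term doing double duty (enlarging the domain and supplying the regularity needed to close the forcing estimate). The way you control $F(\tilde w)$ in $L^2(0,T;L^2)$ — splitting into $X$-controlled cubic pieces and the $\mathcal{Z}$-controlled quasilinear piece $\tilde w_x^2\,\partial_x^4\tilde w$ — matches the mechanism behind the paper's estimate \eqref{estimates}. Your contraction estimate (Lipschitz in $F$ pushed through the semigroup) is an abstract version of what the paper does concretely: the paper tests the difference equation with $z_t$, obtains the coupled energy identity \eqref{energyrelation**}, and controls the flow-trace cross term $(z_x,\gamma_0[\psi_1-\psi_2])$ via the hidden-regularity estimate \eqref{trace-reg-est-M00} and Gronwall; the same flow-trace regularity is implicitly what guarantees the bounded-semigroup constant you need on $e^{\widehat{\bA}_\delta t}$, since that is exactly how the full generator $\widehat{\bA}+\bP$ was constructed in Section~\ref{Section2}.

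The one place where you diverge from the paper and should be cautious is the phrase ``analytic semigroup estimates for the damped beam.'' The generator $\widehat{\bA}_\delta$ of the coupled flow--beam system is emphatically \emph{not} analytic (the flow block is hyperbolic), so maximal parabolic regularity cannot be read off the coupled semigroup directly. To make your route precise you would have to decouple: apply analytic regularity to the beam operator $-\delta\partial_x^4\partial_t - D\partial_x^4$ with the trace $r_{(0,L)}\gamma_0[\psi]$ treated as known forcing, and then verify the flow trace has the needed regularity — a feedback loop. The paper sidesteps this cleanly by instead testing the beam equation of the constructed solution directly with $w_t$ and $\partial_x^4 w_t$, obtaining the $E_0$ and $E_2$ identities with the coercive term $\delta\|\partial_x^4 w_t\|^2$, and controlling the coupling term through $\gamma_0[\psi]\in H^{1/2}(\mathbb R)$, which is available because $y(t)\in\mathscr{D}(\bA_\delta)$ already delivers $\psi\in H^1(\realstwo_+)$. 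Your energy-level conclusions are the same; the paper's multiplier approach simply avoids any appeal to analyticity of an operator that is not analytic.

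Finally, your concern about closedness of $\mathcal{U}_\delta$ in the $X_{T,\delta}$ metric is well placed: the paper only remarks that it works ``within the framework of a convex metric space'' without verification, and the standard resolution you propose (weak-$\ast$ lower semicontinuity of the $\mathcal{Z}_{T,\delta}$ bound under convergence in the weaker topology, via Banach--Alaoglu) is the correct and necessary supplement. Overall the proposal is sound and captures the essential architecture of the paper's argument.
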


Later we will show that if the initial condition satisfies $y_0 \in \mathscr D({\bA}) \subset \mathscr D({\bA}_\delta)$, then both $T_\delta$ and $\|y\|_{\mathcal{Z}_{T}}$ can be chosen independent of $\delta$.

\begin{proof} 

\noindent {\bf{Step 1: Self-mapping}.} We aim to show that for any fixed $\tilde{{y}} \in \mathcal{U}_{\delta}$, the solution ${y} = \mathcal{T}(\tilde{{y}})$ remains in $\mathcal{U}_{\delta}$ for a time \(T\), sufficiently small, which will depend on the parameter $\delta$. Since the nonlinearity appears only in the beam component of the system, our analysis here focuses on that equation. The flow system is linear, so for given $(w,w_t)$, it can be estimated directly; indeed, we  appeal directly to the identity \eqref{energyrelation*} and the flow-trace regularity in \eqref{trace-reg-est-M0} which applies for a flow solution $y=(\phi,\psi;w,v)=\mathcal T(\tilde y)$. Namely, for the solution we have: 
\begin{align}\label{f1}
\sup_{t \in [0,T]}& \Big( \|\psi(t)\|_{L^2}^2 
    + \|\nabla_{x,z} \phi(t)\|_{L^2}^2 
    + \mu \|\phi(t)\|_{L^2}^2 
    + E_0(t) \Big) \notag \\
    \leq &  C \Big( \|\psi_0\|_{L^2}^2 
    + \|\nabla_{x,z} \phi_0\|_{L^2}^2 
    + \mu \|\phi_0\|_{L^2}^2 
    + E_0(0) \Big) 
   + |U| \int_0^T |(w_x, r_{(0,L)}\gamma_0[\psi])_{L^2(0,L)}| \, d\tau. 
\end{align}
where the beam energy is given by \eqref{energiesdef}.

It now remains to obtain suitable bounds for the beam variables $(w,w_t)$. To this end, we first derive an estimate in the lower norm by testing the beam equation \eqref{beamFP} with \(w_t\) and integrating over the domain to obtain the identity:
\[
\frac{d}{dt} E_0(t) + \delta \|w_{xxt}(t)\|^2 = ( -F(\tilde w), w_t ) + (r_{(0,L)} \gamma_0[\psi], w_t ).
\]

We estimate the nonlinear and coupling terms using Sobolev embeddings and Young's inequality:
\begin{equation}\label{estimates}
|(F(\tilde w), w_t)| \leq \epsilon \|w_{xxt}\|^2 + C_\epsilon \|\tilde w\|_{\mathscr D(\mathscr A)}^{2k}, ~~
|(r_{(0,L)}\gamma_0[\psi], w_t)| \leq \epsilon \|w_{xxt}\|^2 + C_\epsilon \|r_{(0,L)}\gamma_0[\psi]\|_{L^2(0,L)}^2,
\end{equation}
where $k$ is a positive integer and the constants $C_\epsilon$ depend on $\delta$.

Here, the trace regularity is in fact \(\gamma_0[\psi] \in H^{1/2}(\mathbb R)\), following from the fact that, since ${y} \in C([0,T]; \mathscr D({\bA}_\delta))$, we have  \(\psi \in C([0,T]; H^1(\mathbb R^2_+))\) from Theorem~\eqref{domainchar}. Then, integrating in time and combining estimates, and selecting $\epsilon$ small relative to $\delta>0$ fixed, we obtain (after dropping the damping term):
\[
E_0(t) \leq E_0(0) + C_{\delta} T L^{2k} + C_{\delta} T \sup_{t \in [0,T]} \|\psi(t)\|_{H^{1}(\mathbb R^2_+)}^2.
\]
Thus, there exists \(T^{\#}_{\delta}>0 \) small enough so that: $\ds
\sup_{t \in [0,T^{\#}_{\delta}]} E_0(t) \leq 2E_0(0).$

We now pursue an estimate in the higher norm. To this end, we test equation \eqref{beamFP} with $\partial_x^4 w_t$ and define the higher-order energy:
\[
E_2(t) \equiv \frac{1}{2} \left( \|w_t(t)\|_{H^2_*}^2 + \|w(t)\|_{\mathscr D(\mathscr A)}^2 \right) 
= \frac{1}{2} \left( \|\partial_x^2 w_t(t)\|^2 + \|\partial_x^4 w(t)\|^2 \right).
\]
This yields the identity:
\[
\frac{d}{dt} E_2(t) + \delta \|\partial_x^4 w_t(t)\|^2 
= -( F(\tilde w), \partial_x^4 w_t ) + (r_{(0,L)} \gamma_0[\psi], \partial_x^4 w_t ).
\]
Using the strength of the damping and the definition of $\mathcal Z_T$, we estimate as in \eqref{estimates} to obtain:
\[
\sup_{t \in [0,T]} E_2(t) \leq E_2(0) + C_{\delta} T L^{2k} 
+ C_{\delta} T \sup_{t \in [0,T]} \|\psi(t)\|_{H^{1}(\mathbb R^2_+)}^2.
\]
Thus, for sufficiently small $T = T^\#_\delta$ (understood as the minimum of the restrictions obtained from $E_0$ and $E_2$ estimates), the beam components $(w,w_t)$ remain within the ball 
of radius $L$ in the norm of $\mathcal{Z}_{T,\delta}$.
By combining the $E_0$ and $E_2$ estimates for the beam component with the flow bounds in \eqref{f1}, we conclude that, for sufficiently small $T^\#_\delta$, the fixed point map $\mathcal{T}$ maps 
$\mathcal{U}_{\delta}$ into itself. The size of this set depends on $\delta$.

\medskip
\noindent {\bf{Step 2: Contraction Property}.} Let $\tilde{{y}}_1, \tilde{{y}}_2 \in \mathcal{U}_{\delta},$ and define the images under the fixed point map as \( {y}_j \equiv \mathcal{T}(\tilde{{y}}_j) = (\phi_j, \psi_j; w_j, v_j) \), for \( j = 1, 2 \). Both trajectories $\tilde y_1$ and $\tilde y_2$ have the same initial conditions (by definition of the fixed point mapping). Secondly, since the flow equation is linear, estimating differences thereof is handled straightforwardly (with zero initial data, by the principle of superposition). Thus we follow the structure of the argument in in the linear case in Step 4 of Section \ref{fulldyn}.

We now focus on the estimates for $\mathcal{T}(\tilde{{y}}_1) - \mathcal{T}(\tilde{{y}}_2)$ which will come from the beam variables in the difference \( z = w_1 - w_2 \).
Specifically, $z$ satisfies the beam equation:
\[
z_{tt} + \partial_x^4 z + \delta \partial_x^4 z_t = -[F(\tilde w_1) - F(\tilde w_2)]+r_{(0,L)}\gamma_0[\psi_1-\psi_2], \quad z(0) = 0, \quad z_t(0) = 0.
\]
The flow equation remains the same as in \eqref{fullpot*}, by linearity, and thus we can invoke the energy relation akin to \eqref{energyrelation*} by testing the beam with $z_t$ and the flow $\psi_1-\psi_2$.
Denoting
\begin{align}\label{energies**}
~~\mathcal E_z(t) =  \dfrac{1}{2}\big(||z_t||^2+D||z_{xx}||^2\big)+\dfrac{1}{2}\big(||\psi_1-\psi_2||^2+||\nabla_{x,z} [\phi_1-\phi_2]||^2+\mu||\phi_1-\phi_2||^2\big)
\end{align}
we have (noting that $\mathcal E_z(0)=0$):
\begin{equation}\label{energyrelation**}
\mathcal E_z(t)+\delta\int_0^t||z_{xxt}||^2d\tau + U\int_0^t ( z_x,r_{(0,L)}\gamma_0[\psi_1-\psi_2])_{L^2(0,L)} dt =-\int_0^t(F(\tilde w_1) - F(\tilde w_2), z_t)d\tau. \end{equation}

The nonlinear terms are estimated using their polynomial nature and standard Sobolev embeddings, as in \cite{maria2}:
\begin{equation}
|(F(\tilde w_1) - F(\tilde w_2), z_t)| \leq C L^k \|\tilde w_1 - \tilde w_2\|_{H^2_*} \|z_t\|_{L^2(0,L)}
\leq \epsilon \|z_t\|_{L^2(0,L)}^2 + C_\epsilon L^{2k} \|\tilde w_1 - \tilde w_2\|_{H^2_*}^2.
\end{equation}
We may use the above in \eqref{energyrelation**} to absorb \( \|z_t\|_{L^2}^2 \) via Poincar\'e with $\epsilon$ chosen small with respect to the fixed $\delta$.
This produces the intermediate inequality:
\begin{align}
\mathcal E_z(t) \le  &~ C L^{2k} \int_0^T\|\tilde w_1 - \tilde w_2\|_{H^2_*}^2d\tau + |U|\int_0^T \left|( z_x,r_{(0,L)}\gamma_0[\psi_1-\psi_2])_{L^2(0,L)}\right| dt.
\end{align}
Then, we estimate as:
$$\big|( z_x,r_{(0,L)}\gamma_0[\psi_1-\psi_2])_{L^2(0,L)}\big| \le ||z_x||_{H^1_*(0,L)}||r_{(0,L)}\gamma_0[\psi_1-\psi_2])_{L^2(0,L)}||_{H_*^{-1}(0,L)}$$
and use \eqref{trace-reg-est-M0} adapted here with $\epsilon=1/2$:
\begin{equation}\label{trace-reg-est-M00}
\int_0^T\|r_{(0,L)}\gamma_0[\psi](t)\|^2_{H_*^{-1} (0,L)}dt\le c\left(
 \int_0^T\| z_t+Uz_x \|_{L^2(0,L)}^2dt\right) \le c\int_0^T\mathcal E_z(t)dt.
\end{equation}
and the zero initial data of the differences for $\phi_1-\phi_2$ and $\psi_1-\psi_2$.
We then have:
\begin{equation} \mathcal E_z(t) \le  ~C L^{2k}T \sup_{t \in [0,T]} \|\tilde y_1-\tilde y_2\|^2_Y+c|U|\int_0^T\mathcal E_z(t)dt, \end{equation}
where $Y$ as in \eqref{statespace}. From here, we can invoke Gronwall's inequality, to obtain:
\begin{equation} \mathcal E_z(t) \le  ~Ce^{c|U|T} L^{2k}T \sup_{t \in [0,T]} \|\tilde y_1-\tilde y_2\|^2_Y \le q(T) \sup_{t \in [0,T]} \|\tilde y_1-\tilde y_2\|^2_Y \end{equation}

By the intermediate value property, a $T^{\#}_{\delta}>0$ can be chosen sufficiently small so that $q(T^{\#}_{\delta})<1$. 
Therefore, after possible minorization of $T^{\#}_{\delta}$, and taking the supremum over $t \in [0,T^{\#}_{\delta}]$, the mapping $\mathcal{T}$ satisfies the contraction property:
\[
\|\mathcal{T}(\tilde{y}_1) - \mathcal{T}(\tilde{y}_2)\|_{X_T}^2
\leq q \|\tilde{y}_1 - \tilde{y}_2\|_{X_T}^2,
\]
for all $T \leq T^{\#}_{\delta}$.

The contraction mapping principle thus applies to the mapping $\mathcal T:\mathcal U_{\delta} \to \mathcal U_{\delta}$.
\end{proof}

After applying the contraction mapping principle, the resulting fixed point yields a local-strong solution for the damped system and completes the proof of Theorem \ref{FPExistence}.

\begin{corollary}[Local Existence of Nonlinear Strong Solutions via Contraction Fixed Point]
Let $\delta>0$ and consider \eqref{beamFP}. With the definition of $\mathcal T:\mathcal U_{\delta} \to \mathcal U_{\delta}$ in \eqref{Tmap}, together with the variation of parameters formula \eqref{varpar2}, we obtain a strong solution in the sense of Section \ref{solutions}, namely, $y_{\delta}(t) = (\phi^{\delta}(t),\psi^{\delta}(t);w^{\delta}(t),v^{\delta}(t)) \in \mathcal U_{\delta}$. We have the identifications $\psi^{\delta}(t) = \phi^{\delta}_t(t)+U\phi^{\delta}_x(t)$ and $v^{\delta}(t)=w^{\delta}_t(t)$. This solution is local in the sense of Remark \ref{locality}.
\end{corollary}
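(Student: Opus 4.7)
The proof is essentially a translation of the abstract fixed point from Theorem~\ref{FPExistence} into the PDE-level regularity required by Section~\ref{solutions}. My plan is to unpack the variation of parameters formula using Pazy-style semigroup regularity and then read off each bulleted item of the strong-solution definition. First, from $y_\delta = \mathcal T(y_\delta)$ and \eqref{varpar2} we have
\[ y_\delta(t) = e^{\widehat{\bA}_\delta t}y_0 + \int_0^t e^{\widehat{\bA}_\delta(t-s)}\mathcal F(y_\delta(s))\,ds, \]
where $\mathcal F(y_\delta) = (0,0;0,-F(w^\delta))$. Since $y_\delta \in \mathcal U_\delta \subset C([0,T_\delta];\mathscr D(\bA_\delta))$ and $F$ is a polynomial in the derivatives $w_x, w_{xx}, w_{xxx}, \partial_x^4 w$ whose relevant norms are controlled through the $\mathcal Z_{T,\delta}$ bound on $\|w^\delta\|_{\mathscr D(\mathscr A)}$ combined with Sobolev embeddings, $\mathcal F(y_\delta)$ takes values in $Y$ and in fact inherits enough time-regularity (via the fixed-point identity, $w^\delta \in C^1([0,T_\delta]; H^2_*)$) to apply the standard Pazy regularity result for mild solutions.

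Second, since $y_0 \in \mathscr D(\bA) \subset \mathscr D(\bA_\delta)$ and the forcing satisfies the requisite continuity/differentiability into $Y$, the mild solution promotes to a strong one, giving $y_\delta \in C([0,T_\delta];\mathscr D(\bA_\delta)) \cap C^1([0,T_\delta];Y)$ with the abstract ODE $\dot y_\delta = \widehat \bA_\delta y_\delta + \mathcal F(y_\delta)$ holding in $Y$ for a.e.\ $t$. I would then unpack this against the operator form in \eqref{op-AS} as modified by the damping $\delta \partial_x^4 w_t$. The first row delivers $\phi^\delta_t = -U\phi^\delta_x + \psi^\delta$, hence $\psi^\delta = \phi^\delta_t + U\phi^\delta_x$; combining with the second row yields $(\partial_t + U\partial_x)^2 \phi^\delta = \Delta_\mu \phi^\delta$ a.e. in $\mathbb R^2_+$. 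The third row gives $v^\delta = w^\delta_t$, and the fourth delivers the damped nonlinear beam equation in $L^2(0,L)$. All boundary conditions, both the impermeability/KJC pair for the flow and the clamped/free conditions for the beam, are encoded directly in the requirement $y_\delta(t) \in \mathscr D(\bA_\delta)$ (using the domain characterization of Theorem~\ref{domainchar} adapted to the damped generator).

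Third, for locality, the time $T_\delta = T^\#_\delta$ produced in Theorem~\ref{FPExistence} was chosen purely in terms of $\delta$, $L$, and $\|y_0\|_{\mathscr D(\bA_\delta)}$, so it matches both alternative formulations of Remark~\ref{locality}: either fix data and extract a time, or fix a time and restrict to a ball in data. A maximal extension $[0,T^*_\delta)$ follows by iterating Theorem~\ref{FPExistence} with the current endpoint as new initial datum.

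The main obstacle is bookkeeping in Step 2, specifically the fact that for the damped generator one only has $\partial_x^4(Dw^\delta + \delta w^\delta_t) \in L^2(0,L)$ from $y_\delta \in \mathscr D(\bA_\delta)$, which does not a priori separate $\partial_x^4 w^\delta$ from $\partial_x^4 w^\delta_t$. This is precisely why the invariant set $\mathcal U_\delta$ was enriched with the $\mathcal Z_{T,\delta}$ constraint controlling $\|w^\delta\|_{\mathscr D(\mathscr A)}$; that additional regularity is what legitimizes writing the nonlinearity $F(w^\delta)$ in $L^2(0,L)$ and thereby closes the circle, ensuring the damped beam PDE holds literally in $L^2(0,L)$ a.e.\ $t$, as required by the strong-solution definition (via the modified bullet noted in Section~\ref{solutions}).
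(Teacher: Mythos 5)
Your overall structure matches what the paper intends (the paper's own proof of this corollary is a single sentence pointing to the fixed point, so the detail you supply is genuinely new), and your closing paragraph correctly identifies the key point: the $\mathcal Z_{T,\delta}$ constraint is what supplies $\partial_x^4 w^\delta \in L^2(0,T;L^2(0,L))$ and thus makes $F(w^\delta)$ a legitimate $L^2(0,L)$-valued forcing a.e.\ $t$, closing the circle that the damped-generator domain alone cannot.

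The imprecision is in your second step, where you claim the forcing $\mathcal F(y_\delta)=(0,0;0,-F(w^\delta))$ ``inherits enough time-regularity (via $w^\delta\in C^1([0,T_\delta];H^2_*)$) to apply the standard Pazy regularity result,'' concluding $y_\delta\in C^1([0,T_\delta];Y)$. This does not go through as stated: $F(w^\delta)$ involves $\partial_x^4 w^\delta$, and the regularity $w^\delta\in C^1([0,T_\delta];H^2_*)$ controls only $\partial_x^2 w^\delta$ in time; it says nothing about continuity, let alone Lipschitz or $C^1$ dependence, of $F(w^\delta)$ into $L^2(0,L)$. Indeed the $\mathcal Z_{T,\delta}$ constraint gives $\partial_x^4 w^\delta$ only in $L^2(0,T;L^2(0,L))$, so the forcing is merely $L^2$-in-time into $Y$, which is not the hypothesis of the Pazy result you appear to be invoking (Lipschitz/$C^1$ forcing, or forcing valued in the domain with $\widehat\bA_\delta\mathcal F\in L^1$). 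The correct and simpler route---which you actually do not need the forcing regularity for---is to use the fact that the fixed point lies in $\mathcal U_\delta\subset C([0,T_\delta];\mathscr D(\bA_\delta))$ \emph{pointwise in time}. For a mild solution valued a.e.\ in the domain of the generator with $\widehat\bA_\delta y_\delta\in L^1(0,T;Y)$ and $\mathcal F(y_\delta)\in L^1(0,T;Y)$, the integrated identity $y_\delta(t)-y_0 = \widehat\bA_\delta\int_0^t y_\delta + \int_0^t\mathcal F(y_\delta)$ and closedness of $\widehat\bA_\delta$ give $y_\delta\in W^{1,1}(0,T;Y)$ with $\dot y_\delta=\widehat\bA_\delta y_\delta+\mathcal F(y_\delta)$ a.e.\ in $Y$. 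This delivers exactly the strong-solution notion of Section 2.2 (which only requires $L^1_{\rm loc}$-in-time derivatives and the equations to hold a.e.), and you should replace the overclaimed $C^1([0,T_\delta];Y)$ accordingly. The remainder of your argument---reading off the constituent PDEs and boundary conditions from the rows of the operator and from domain membership, and the account of locality---is correct.
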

We will produce estimates on the resulting strong solution which are  independent of $\delta$   in the next section.

\subsection{Uniformity in $\delta$ }
We now show that the local solutions constructed above (with damping) admit a time of existence and a priori bounds that are independent of $\delta>0$. 
This later allows passage to the limit $\delta \searrow 0$ in \eqref{beamFP}, to obtain strong (local) solutions to \eqref{fullpot*}. 
We will require a priori estimates for smooth solutions
$y_\delta \in C([0,T];\mathscr D(\bA)) \cap C^1([0,T];Y),$
 independent of $\delta$; we will first obtain uniformity in  the sense of
$C^1([0,T];Y) \cap L^2(0,T; \mathscr D(\mathbb A)),$
which suffice for the limit passage, and then we will subsequently upgrade the time regularity from $L^2(0,T)$ to $C([0,T])$.
 
Running the estimates on both flow and beam with the constructed solution $y_\delta$, we rely on the a priori results of Section \eqref{beamests}. Therein, we observe the beam admits  $\delta$-independent estimates. The main remaining issue is to control the flow-beam coupling at the boundary. The standard approach of utilizing the beat test function $\partial_x^4 w_t$ is problematic from two points of view: first, in \cite{maria2}, we note that this is a problematic multiplier for the analysis of the beam nonlinearity; secondly, it would produce the  term,
$\int_0^L  \langle \partial_x^4 w_t , r_{(0,L)}\gamma_0[\psi]\rangle$, which requires $\psi_t \in L^2(0,L)$, while only $\psi_t \in H^{-1/2}_*(0,L)$ is available. We therefore pursue a different approach, outlined below.

\begin{enumerate}[label=(\roman*)]
\item We repeat the estimates at the lower energy level for the full system. Here any unsigned contributions from the beam nonlinearity cancel out, yielding a conservative estimate for the smooth solution, depending only on the initial data measured in the finite energy space.

\item Next, we differentiate the system in time. The resulting nonlinear terms reorganize into energy-level contributions, so the estimate is independent of $\delta$, though now depends on higher norms of the initial data and on fourth-order derivatives of the beam. This reflects the fact that the strong damping does not provide maximal regularity for elements in $\mathscr D(\bA_\delta)$.  

\item Lastly, we perform an additional ``equipartition" estimate on the beam equation using the multiplier $\partial_x^4 w$. This recovers maximal $ w \in L^2(0,T;\mathscr D(\mathcal A)) $ regularity, though this holds only for data in the smaller space $\mathscr D({\bA})\subset \mathscr D({\bA}_\delta)$. In this step, the favorable sign of the critical nonlinearity plays a major role.  
\end{enumerate}

The following theorem, together with its proof, covers these points and establishes the desired $\delta$-independent estimates on strong solutions, which can be applied to the constructed local, strong solution, $(\phi^{\delta}, \psi^{\delta};w^{\delta}, v^{\delta})$.

\begin{theorem}[$\delta$-independence]
The local strong solutions $y_\delta$ constructed in Theorem \ref{FPExistence} admit  a time of existence $T>0$ and uniform bounds that are both independent of the damping parameter $\delta \geq 0$.
\end{theorem}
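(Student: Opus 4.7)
The plan is to derive \(\delta\)-uniform a priori bounds on the strong solutions \(y_\delta\) of \eqref{beamFP}, via three layers of estimate corresponding to items (i)--(iii) of the outline. The binding constraint throughout is that the natural beam multiplier \(\partial_x^4 w_t\) must be avoided: it is incompatible with the variational structure of the inextensibility nonlinearity \(F(w)\) (see \cite{maria2}), and at the interface it would require \(\psi_t \in L^2(0,L)\), whereas only \(\psi_t \in H^{-1/2-\epsilon}\) is available through Lemma \ref{le:FTR0}. \emph{Stage 1 (energy identity).} Test the flow with \(\psi=\phi_t+U\phi_x\) and the beam with \(w_t\), as in the derivation of \eqref{energyrelation*}, retaining the nonlinearity \(F(w)\). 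The \cite{maria2} observation
\[
-\bigl(F(w), w_t\bigr)_{(0,L)} = \tfrac{d}{dt}\Bigl[\tfrac{D}{2}\|w_x w_{xx}\|^2\Bigr]
\]
shows that \(F\) contributes only to a conservative beam energy; the Kelvin-Voigt term yields \(\delta\|w_{xxt}\|^2\ge 0\) which is discarded. The KJC boundary integral is handled precisely as in Section \ref{fulldyn} via \eqref{trace-reg-est-M0} and Gronwall's inequality, producing a \(\delta\)-independent bound on \(\mathcal E(t) + \tfrac{D}{2}\|w_x w_{xx}\|^2(t)\) depending only on \(\|y_0\|_Y\).

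\emph{Stage 2 (time-differentiated system).} Differentiate \eqref{beamFP} in \(t\); the flow equations for \((\phi_t, \psi_t)\) retain precisely the hyperbolic-KJC structure of \eqref{flow1}, so Lemma \ref{le:FTR0} applies verbatim to \(\gamma_0[\psi_t]\). Run the Stage 1 pairing on the differentiated system with multipliers \((\psi_t, w_{tt})\). Differentiating \(F(w)\) and pairing with \(w_{tt}\) produces exactly the cubic expressions of \eqref{energy1}, which reorganize into \(\tfrac{d}{dt}E_1\) of \eqref{secondenerg} plus cubic remainders bounded by \(C[E_1]^2\) after Sobolev embedding. The coupling term \(U\int_0^t(w_{xt}, r_{(0,L)}\gamma_0[\psi_t])\,ds\) is controlled via Lemma \ref{le:FTR0}, the damping contribution is \(\delta\|w_{xxtt}\|^2\ge 0\) (discarded), and \(\|w_{tt}(0)\|\) is bounded through the beam equation at \(t=0\) together with Theorem \ref{domainchar}. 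The nonlinear Gronwall argument \eqref{timediff}--\eqref{NonlinGronw} then supplies a time \(T^*>0\) and uniform-in-\(\delta\) bounds on \((\phi_t, \psi_t; w_t, w_{tt})\) in finite energy, depending only on \(\|y_0\|_{\mathscr D(\mathbb A)}\). \emph{This stage is the principal obstacle:} the time differentiation must preserve both the KJC trace structure of Lemma \ref{le:FTR0} (for \(\psi_t\)) and the \cite{maria2}-cancellation for \(F'(w)\cdot w_{tt}\) simultaneously, while the coupling exchanges energy through an interface trace with strictly less than \(L^2\) regularity in time.

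\emph{Stage 3 (equipartition for \(\partial_x^4 w\)).} Since \(\mathscr D(\mathbb A_\delta)\) controls only the combination \(\partial_x^4(Dw + \delta w_t)\), pair the beam equation with the multiplier \(\partial_x^4 w\). The clamped-free boundary conditions yield \((w_{tt}, \partial_x^4 w) = (\partial_x^2 w_{tt}, \partial_x^2 w)\), which is controlled by Stage 2. The damping produces the monotone time-derivative \(\tfrac{\delta}{2}\tfrac{d}{dt}\|\partial_x^4 w\|^2\), whose time integral \(\tfrac{\delta}{2}\|\partial_x^4 w(T^*)\|^2\) has favorable sign and whose initial contribution is controlled by \(\|y_0\|_{\mathscr D(\mathbb A)}\). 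The coupling term \((r_{(0,L)}\gamma_0[\psi], \partial_x^4 w)\) is absorbed via Young's inequality against \(D\|\partial_x^4 w\|^2\), using that \(\gamma_0[\psi]\in L^2(0,L)\) from Stage 1. The critical nonlinear pairing \((F(w), \partial_x^4 w)\) is handled by the positivity/variational structure of the inextensibility stiffness as in \cite[Section 4]{maria2}, where the favorable sign \(1+w_x^2\ge 1\) provides \(L^2_x\)-coercivity modulo cubic remainders bounded by the Stage 1-2 quantities. Integration in time gives \(w \in L^2(0,T^*; \mathscr D(\mathscr A))\) uniformly in \(\delta \geq 0\). The upgrade to \(C([0,T^*]; \mathscr D(\mathscr A))\) then follows from algebraic rearrangement of the beam equation combined with the Stage 2 continuity of \(w_{tt}\), completing the \(\delta\)-uniform bound.
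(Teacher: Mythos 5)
Your three-stage structure matches the paper's outline, and Stages 1 and 2 are essentially the argument given there (same multipliers, same reliance on the \cite{maria2} cancellation and on Lemma \ref{le:FTR0} applied to the time-differentiated flow). Stage 3, however, has two genuine gaps.

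First, you integrate by parts to write \((w_{tt},\partial_x^4 w) = (\partial_x^2 w_{tt}, \partial_x^2 w)\) and assert this is ``controlled by Stage 2.'' Stage 2 controls \(\|w_{tt}\|_{L^2}\) and \(\|w_{xxt}\|_{L^2}\) (the quantities in \(\mathcal E_1\)), but not \(\|\partial_x^2 w_{tt}\|_{L^2}\) -- that would require differentiating the system twice in time, which you do not do and cannot afford. The correct move is the opposite: keep the pairing as \((w_{tt},\partial_x^4 w)\), apply Young to get \(\epsilon\|\partial_x^4 w\|^2 + C_\epsilon\|w_{tt}\|^2\), absorb the first piece into the left-hand side, and bound the second by Stage 2. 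Your rearrangement trades a controllable quantity for one that is not.

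Second, and more seriously, you claim \(\gamma_0[\psi]\in L^2(0,L)\) ``from Stage 1.'' Stage 1 gives exactly the flow energy, i.e. \(\psi\in L^2(\mathbb R^2_+)\) and \(\nabla\phi\in L^2(\mathbb R^2_+)\); an \(L^2\) function on a half-plane has no boundary trace, so this does not produce \(\gamma_0[\psi]\) at all, let alone in \(L^2(0,L)\). What is actually needed is a \emph{uniform-in-\(\delta\)} bound on \(\|\psi(t)\|_{H^1(\mathbb R^2_+)}\), and that is the hard part of Step (iii). It requires an elliptic bootstrap on the flow: Stages 1 and 2 together control \(\phi\), \(\phi_t\), \(\psi\), \(\psi_t\) in the flow energy norms, hence \(\phi_{tt}\) in \(L^2(\mathbb R^2_+)\); then the flow equation itself gives \(\Delta_U\phi = \phi_{tt}+2U\phi_{xt}+U^2\phi_{xx}\in L^2(\mathbb R^2_+)\); and finally one invokes the mixed (Zaremba) elliptic regularity of Theorem \ref{domainchar} to conclude \(\psi\in H^1(\mathbb R^2_+)\) with bound \(C(\mathcal E_0,\mathcal E_1)\). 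Without this chain, the coupling term \((r_{(0,L)}\gamma_0[\psi],\partial_x^4 w)\) is not even defined, and the \(\delta\)-independence of the fourth-order bound cannot be closed. You should also note that bounding the \emph{initial} higher energy \(\mathcal E_1(0)\) in terms of \(\|y_0\|_{\mathscr D(\mathbb A)}\) requires reading \(w_{tt}(0)\), \(\psi_t(0)\), \(\nabla\phi_t(0)\) off the equations; your one-line allusion to this is compatible with the paper but would need to be spelled out, since it is not automatic.
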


\begin{proof}
This proof of this theorem is just the verification of the three steps outlined above. For each step, we establish the corresponding energy estimate and show emphasize its uniformity in $\delta$. To that end, we begin by introducing the two classes of energies $E_0, E_{f,0}$ and $E_1, E_{f,1}$ together with some preliminary observations, which will serve as the foundation for  steps (i)--(iii). Consider:
\begin{align}
E_{0} & = \dfrac{1}{2}\big(||w_t||^2+D||w_{xx}||^2+D||w_xw_{xx}||^2\big), \\
E_{f,0} &=  \dfrac{1}{2}\big(||\psi||^2+||\nabla_{x,z} \phi||^2 + \mu ||\phi||^2\big), \\ 
E_{1} & =  \frac{1}{2} \left[ ||w_{tt}||^2 +  D||w_{xxt}||^2 + D||w_{xt}w_{xx}||^2 + D||w_{x}w_{xxt}||^2 \right], \\
E_{f,1} &=  \dfrac{1}{2}\big(||\psi_t||^2+||\nabla_{x,z} \phi_t||^2 + \mu ||\phi_t||^2\big),
\end{align}
and ~$\mathcal E_i(t)  =  E_{i}(t)+E_{f,i}(t),~i=0,1$, 
along with the identity:
\begin{align}\label{enest1}
\mathcal E_0(t)+ U\int_0^t  \big \langle w_x,r_{(0,L)}\gamma_0[\psi]\big \rangle_{(0,L)} d\tau = \mathcal E_0(0).
\end{align}
The pairing $\langle \cdot , \cdot \rangle$  will be a posteriori justified as an $L^2$ inner product for smooth solutions, and can be justified as an $H^1_* \times H^{-1}_*$ duality pairing at the baseline energy level. 
The higher order energy estimate for $\mathcal E_1$ will be polluted by nonlinear terms coming from \eqref{energy1} and will be discussed below.

The trace estimate in \eqref{trace-reg-est-M0} for the flow system \eqref{flow1}, being linear in beam variables, applies to provide the a priori flow trace estimates:
\begin{align} \label{tracee1}
\int_0^T\|r_{(0,L)}\gamma_0[\psi](t)\|^2_{H_*^{-1/2 -\epsilon} (0,L)}dt\le&~ C\left(
E_{fl,0}(0)+ \int_0^T\| w_t+Uw_x \|_{L^2(0,L)}^2dt\right)\\\label{tracee2}
\int_0^T\|r_{(0,L)}\gamma_0[\psi_t](t)\|^2_{H_*^{-1/2 -\epsilon} (0,L)}dt\le&~ C\left(
E_{fl,1}(0)+ \int_0^T\| w_{tt}+Uw_{xt} \|_{L^2(0,L)}^2dt\right).
\end{align}
We  recall  the beam equation is forced by the flow   $\gamma_0[ \psi(t)]$,  necessitating  estimates on: 
$$(D^jw, r_{(0,L)}\gamma_0[\psi] )_{L^2(0,L)},\quad D^j \in \{\partial_t, \partial_t^2, \partial_x^4\}.$$

We must justify the estimates in \eqref{FirstLevel}--\eqref{lastlast} with $p$ given as ~$p=r_{(0,L)}\gamma_0[\psi]$. We note that in the analysis that follows, the presence of $\delta$ will be ignored, to demonstrate that strong solutions $y_{\delta}(t) = (\phi^{\delta}(t),\psi^{\delta}(t);w^{\delta}(t),v^{\delta}(t)) \in C([0,T]; \mathscr D({\bA}_\delta)$ can in fact be estimated independent of $\delta \ge 0$. In particular, in Steps (i) and (ii) the $\delta$-terms produced on the left-hand side are signed and conservative,
and thus will be dropped.

\medskip

\noindent\textbf{Step (i): Lower Energy Estimates.} For this step, we note that we can combine \eqref{enest1} and \eqref{tracee1} as in \eqref{trace-reg-est-M00}, along with Gronwall, to provide the desired boundedness at the level of $\sup_{t \in [0,T^{\#}]}\mathcal E_0(t)$ which is independent of $\delta >0 $. This is due to   cancellations between  the terms ~~$-\left(  \partial_x \big[(w_{xx})^2 w_x\big], w_t \right)$ ~and ~~$\left(\partial_{xx}\big[w_{xx}(w_x)^2\big], w_t \right)$, which yields a conservative bound at the lower energy level.

\medskip
\noindent\textbf{Step (ii): Estimates for Time-Differentiated System.} 
Moving on, we must differentiate the entire system \eqref{beamFP} in time and utilize the test functions $(\psi_t,w_{tt})$. 
The linear part of the system reflects the same structure as the calculations at the level of $\mathcal E_0$. 
For the nonlinear contributions of the beam, one obtains terms such as 
$(w_x^2 \partial_x^2 w_t,\, \partial_x^2 w_{tt})$ and $( \partial_x^2 w_{xt},\, \partial_{x} w_{tt})$, which at first appear problematic. 
However, at this stage these contributions reorganize into energy-level terms, so they can be controlled directly through interpolation and Young's inequality. 
This shows that no damping-dependent terms are needed in the bound,  demonstrated in \eqref{energy1}.  

We then write:
\[
J(t) = -4D(w_xw_{xx},w_{xt}w_{xxt})\big|_{0}^t+3D\int_0^t(w_{xx}w_{xxt},w_{xt}^2)+(w_xw_{xt},w_{xxt}^2)\,d\tau.
\]
From this, we obtain:
\begin{align}
\mathcal E_1(t) +U\int_0^t \big \langle w_{xt},r_{(0,L)}\gamma_0[\psi_t]\big \rangle_{(0,L)}\, d\tau 
= \mathcal E_1(0) + J(t).
\end{align}
The nonlinear term $J(t)$ is estimated exactly as in \eqref{timediff} \cite{maria2}, and the trace term is estimated using \eqref{tracee2}. 
From this point, Gronwall can be directly invoked as in previous steps. 
This provides an a priori bound of the form:
\[
\sup_{t \in [0,T]} \left[\mathcal E_1(t) +\mathcal E_0(t)\right] \le K\left(T,\mathcal E_1(0), \mathcal E_0(0)\right).
\]

\medskip

\noindent\textbf{Step (iii): Recovery of Fourth-Order Regularity.} 
To recover the required fourth-order regularity for the beam, we now test the beam equation with $\partial_x^4 w$. Note that this estimate cannot simply be read off from the structurally damped beam, since it involves 
a mixing of time and spatial derivatives. 
The nonlinear term turns out not to be an obstacle, since it contributes with the correct sign as we can see from the equality:
\[
-(F(w), \partial_x^4 w) = \|w_x \partial_x^4 w\|^2 + \mathcal{R}(t),
\]
where $\mathcal{R}(t)$ denotes lower-order terms. 

After invoking the KJC condition as in \eqref{tracee1}, we obtain:
\begin{align}
\frac{\delta}{2} &\|\partial_x^4 w(T)\|^2 
+ D\int_0^T \|\partial_x^4 w\|^2 \,dt
+ D \int_0^T ||w_x \partial_x^4 w||^2 \,dt \label{eq:4thorder}\\
& = \int_0^T (w_{tt}, \partial_x^4 w) \,dt
+ C \int_0^T \mathcal{R}(t)\,dt  \nonumber 
+ \frac{\delta}{2} \|\partial_x^4 w(0)\|^2 + \int_0^T (r_{(0,L)}\gamma_0[\psi], \partial_x^4 w) \,dt  .
\end{align}

The first two terms on the right-hand side are bounded by the initial data 
$\mathcal E_0(0),\mathcal E_1(0)$ (see \cite[Step 5, Section 4.3]{maria2}). 
In addition, the $\delta$-terms in \eqref{eq:4thorder} appear with the correct sign and can be bounded by the initial data; hence they play no essential role. 
Thus, the only remaining step is to obtain an estimate for $\|\gamma_0[\psi]\|$ for a strong solution.

Using the trace theorem, we naturally have that $||\gamma_0[\psi]|| \lesssim ||\psi||_{H^1(\mathbb R^2)}$. 
Then, to complete our estimates for a strong solution, we only need to  control: $$
\sup_{t \in [0,T]} ||\psi(t)||_{H^1(\mathbb R^2_+)} \le C(\mathcal E_0(0),\mathcal E_1(0),||\Delta_{\mu}\phi_0||).$$ This 
proceeds along the lines of the analysis characterizing $\mathscr D(\bA)$ (the proof of Theorem \ref{domainchar}). We outline the steps here, assuming $y_0 \in \mathscr D(\bA)$, so in particular we have that $[\Delta_\mu-U^2\partial_{x}^2]\phi \in L^2(\mathbb R^2_+)$ (recalling that $|U|<1$). At the energy level $\mathcal E_0$, we have control of $||\phi||_{H^1(\mathbb R^2_+)}$ and $||\psi||=||\phi_t+U\phi_x||$, from which control of $||\phi_t||$ follows. With the higher energy level $\mathcal E_1$ we obtain control of $||\phi_t||_{H^1(\mathbb R^2_+)}$ and $||\psi_t||=||\phi_{tt}+U\phi_{xt}||$. Combining these two yields control of $||\phi_{tt}||$. From the flow equation itself written out, ~$\phi_{tt}+2U\phi_{tx}+U^2\phi_{xx}= \Delta_\mu \phi$, we obtain control of $\Delta_U\phi\equiv \Delta_{\mu} \phi - U^2\partial_x^2 \phi \in L^2(\mathbb R^2_+)$. At this stage, elliptic theory with the KJC conditions proceeds exactly as in the proof of Theorem \ref{domainchar}, which yields the estimates:
\begin{equation}\label{H1}||\psi(t)||_{H^1(\mathbb R^2)}+||\nabla_{x,z} \phi_{x}(t)|| \lesssim C(\mathcal E_0(t),\mathcal E_1(t)).\end{equation}

Finally, we note that the initial quantities 
$$||w_t(0)||,~||w_{xx}(0)||,~||\nabla_{x,z}\phi(0)||,~||\phi_t(0)||,~||\psi(0)||=||\phi_t(0)+U\phi_x(0)||$$
are prescribed at the $\mathcal E_0$ level of solutions. 
To finalize our a priori estimate for strong solutions, we must use the assumptions on $y_0=(\phi(0),\phi_t(0);w(0),w_t(0)) \in \mathscr D(\mathbb A)$ to bound the quantities $\mathcal E(0)$ and $\mathcal E_1(0)$. Namely, we need to control:
$$||w_{tt}(0)||, ~||\psi_t(0)||=||\phi_{tt}(0)+U\phi_{xt}(0)||,~||\nabla_{x,z} \phi_t(0)||.$$
These follow from the equations themselves, since
\begin{align}
||w_{tt}(0) || \lesssim & ~||w_{xx}(0)||^2~||\partial_x^4 w(0)|| + ||w(0)||_{H^3(0,L)}^3+||\psi(0)||_{H^1(\mathbb R^2_+)} \\
||\psi_t(0)|| \lesssim & ~||\Delta_{\mu}\phi(0)||+U||\psi_x(0)||.
\end{align}
We can then invoke \eqref{H1} and
 note that, with $y_0 \in \mathscr D(\bA)$, we have $w(0) \in \mathscr D(\mathscr A)$, $w_t(0) \in \mathscr D(\mathscr A^{1/2})$, $\phi(0) \in H^1(\mathbb R^2_+)$ with $\Delta_{\mu}\phi(0) \in L^2(\mathbb R^2_+)$ and $\phi_t(0) \in H^1(\mathbb R^2_+)$---see Theorem \ref{domainchar}.
This concludes our a priori estimates for strong solutions, which are independent of $\delta \ge 0$---namely, the above estimates did not in any way utilize $\delta$ to control the quantities on the right hand side. These estimates are valid on some $[0,T^*]$, which is dictated by strictly by the beam parameters and not in any way on $\delta$. 
\end{proof}

\subsection{Limit process and completion of the proof of Theorem \ref{th:main2}}

From the family of fixed point solutions 
$\{(\phi^\delta,\psi^\delta; w^\delta,v^\delta) \in C([0,T^{\#}];\mathscr D(\bA_\delta))\}_{\delta>0},$
with estimates independent of $\delta$ in the space $Z_{T}$, one is able to construct a strong solution taking $\delta \searrow 0$, using the estimates in the previous section. We require that the initial data are in $\mathscr D(\bA)$. Most of the details of the construction are omitted here, since the corresponding construction for the beam is exactly carried out in \cite{maria2}, and the flow equation  is linear. 

We restrict to the interval $[0,T^{\#}]$, adjusting to a smaller subinterval if necessary. 
{\bf For each $\delta > 0$, the solution $(\phi^\delta, \psi^\delta; w^\delta, v^\delta)$ has sufficient regularity to satisfy the uniform a priori estimates in $Z_T$ established above (with $T$ independent of $\delta$)}. 
By weak compactness, we obtain subsequences with the following convergences:
\[
w^\delta \rightharpoonup w \quad \text{in } L^2(0,T; \mathscr D(\mathscr A)), \qquad 
w_t^\delta \rightharpoonup w_t \quad \text{in } L^\infty(0,T; H^2_*), \qquad
w_{tt}^\delta \rightharpoonup w_{tt} \quad \text{in } L^\infty(0,T; L^2).
\]
Following the same argument in \cite{maria2}, this yields a weak---and subsequently a strong---solution to:
\[
w_{tt} + D\partial_x^4 w = -F(w) + r_{(0,L)} \gamma_0[\psi].
\]
For the flow equation, weak limits being upgraded to strong is immediate, owing to linearity of that equation. 

As a final step, we remove the auxiliary parameter $\mu$, as is standard. 
This is accomplished using the fundamental theorem of calculus together with the estimate
\[
\|\phi(t)\|_{L^2(\mathbb R^2_+)} 
\le \|\phi_0\|_{L^2(\mathbb R^2_+)} 
+ \int_0^t \|\phi_t(\tau)\|_{L^2(\mathbb R^2_+)}\,d\tau,
\]
as in \cite{webster,supersonic}. 
Hence, if the initial data satisfy $\phi_0 \in H^1(\mathbb R^2_+)$, this property is recovered from the corresponding estimate on $\phi_t$, which is available in all cases above. 

Thus we have constructed a solution $y \in \mathcal{U}$, where $\mathcal{U}$ is given by \eqref{definitionofU} and is now entirely independent of $\delta$. Uniqueness follows from the established a priori energy estimates on the difference of two trajectories.

Finally, the $L^2(0,T;\mathscr D(\mathscr A))$ regularity of $w$ can be upgraded to $C([0,T];\mathscr D(\mathscr A))$ by returning to the beam equation itself: for each fixed $t \in [0,T]$, the already established regularity yields
\[
\|\partial_x^4 w\|^2 + ||w_x \partial_x^4 w||^2 
\le \epsilon \|\partial_x^4 w\|^2 
+ C_\epsilon \big( \|w_{tt}\|^2 + \|\psi\|_{H^1}^2 \big) 
+ C\big(\mathcal E_0(0), \mathcal E_1(0)\big).
\]
Here we again note the essential positive contribution of the nonlinear term $(w_x^2, (\partial_x^4 w)^2)$. This allows the absorption of lower-order terms into the left-hand side. Selecting $\epsilon > 0$ sufficiently small closes the estimate, thereby completing the proof of Theorem \ref{th:main2}.

\footnotesize


\begin{thebibliography}{99}

\bibitem{recentabhi} 
Balakrishna, A., Kukavica, I., Muha, B. and Tuffaha, A., 2024. 
Inviscid fluid interacting with a nonlinear two-dimensional plate. 
\emph{Interfaces and Free Boundaries}.

\bibitem{abhi} 
Balakrishna, A., Lasiecka, I. and Webster, J.T., 2023. 
Elastic stabilization of an intrinsically unstable hyperbolic flow-structure interaction on the 3D half-space. 
\emph{Mathematical Models and Methods in Applied Sciences (M3AS)}, 33(3), pp.505--545.

\bibitem{bal0} 
Balakrishnan, A.V., 2012. 
\emph{Aeroelasticity: Continuum Theory}. 
Springer Science \& Business Media.

\bibitem{balshub} 
Balakrishnan, A.V. and Shubov, M.A., 2008. 
Reduction of boundary value problem to Possio integral equation in theoretical aeroelasticity. 
\emph{Journal of Applied Mathematics}, pp.1--27.

\bibitem{ambal} 
Balakrishnan, A.V. and Tuffaha, A.M., 2012. 
Aeroelastic flutter in axial flow: The continuum theory. 
\emph{AIP Conference Proceedings}, 1493(1), pp.58--66.

\bibitem{bolotin}
Bolotin, V.V., 1963.  
\emph{Nonconservative Problems of Elastic Stability}. 
Pergamon Press, Oxford.

\bibitem{LBC96}
Boutet de Monvel, L. and Chueshov, I., 1996. 
Non-linear oscillations of a plate in a flow of gas. 
\emph{Comptes Rendus de l'Acad\'emie des Sciences. S\'erie I, Math\'ematique}, 322(10), pp.1001--1006.

\bibitem{b-c-1}
Boutet de Monvel, L. and Chueshov, I.D., 1999. 
Oscillations of von Karman's plate in a potential flow of gas. 
\emph{Izvestiya: Mathematics}, 63(2), pp.219--242.

\bibitem{chen-triggiani}
Chen, S. and Triggiani, R., 1990. 
Characterization of domains of fractional powers of certain operators arising in elastic systems, and applications. 
\emph{Journal of Differential Equations}, 88(2), pp.279--293.

\bibitem{survey1} 
Chueshov, I., Dowell, E.H., Lasiecka, I. and Webster, J.T., 2016. 
Nonlinear elastic plate in a flow of gas: Recent results and conjectures. 
\emph{Applied Mathematics \& Optimization}, 73(3), pp.475--500.

\bibitem{springer} 
Chueshov, I. and Lasiecka, I., 2010. 
\emph{Von Karman Evolution Equations: Well-posedness and Long-time Dynamics}. 
Springer, New York.

\bibitem{igorirena} 
Chueshov, I. and Lasiecka, I., 2012. 
Generation of a semigroup and hidden regularity in nonlinear subsonic flow-structure interactions with absorbing boundary conditions. 
\emph{Journal of Abstract Differential Equations and Applications}, 3, pp.1--27.

\bibitem{supersonic}
Chueshov, I., Lasiecka, I. and Webster, J.T., 2013. 
Evolution semigroups in supersonic flow-plate interactions. 
\emph{Journal of Differential Equations}, 254(4), pp.1741--1773.

\bibitem{K1}
Crighton, D.G., 1985. 
The Kutta condition in unsteady flow. 
\emph{Annual Review of Fluid Mechanics}, 17(1), pp.411--445.

\bibitem{maria1} 
Deliyianni, M., Gudibanda, V., Howell, J. and Webster, J.T., 2020. 
Large deflections of inextensible cantilevers: Modeling, theory, and simulation. 
\emph{Mathematical Modelling of Natural Phenomena}, 15, p.44.

\bibitem{maria2} 
Deliyianni, M. and Webster, J.T., 2021. 
Theory of solutions for an inextensible cantilever. 
\emph{Applied Mathematics \& Optimization}, 84(2), pp.1345--1399.

\bibitem{dowellnon}
Dowell, E.H., 1966--1967. 
Nonlinear oscillations of a fluttering plate, I and II. 
\emph{AIAA Journal}, 4(1966), pp.1267--1275; 5(1967), pp.1857--1862.

\bibitem{dowell1} 
Dowell, E.H., 2021. 
\emph{A Modern Course in Aeroelasticity}. 
Springer Nature.

\bibitem{DOWELL} 
Dunnmon, J.A., Stanton, S.C., Mann, B.P. and Dowell, E.H., 2011. 
Power extraction from aeroelastic limit cycle oscillations. 
\emph{Journal of Fluids and Structures}, 27(8), pp.1182--1198.

\bibitem{inext1} 
Dowell, E. and McHugh, K., 2016. 
Equations of motion for an inextensible beam undergoing large deflections. 
\emph{Journal of Applied Mechanics}, 83(5), p.051007.

\bibitem{energyharvesting} 
Erturk, A., 2011. 
\emph{Piezoelectric Energy Harvesting}, Vol. 2. 
Wiley \& Sons.

\bibitem{K2} 
Frederiks, W., Hilberink, H.C.J. and Sparenberg, J.A., 1986. 
On the Kutta condition for the flow along a semi-infinite elastic plate. 
\emph{Journal of Engineering Mathematics}, 20(1), pp.27--50.

\bibitem{Grisvard} 
Grisvard, P., 2011. 
\emph{Elliptic Problems in Nonsmooth Domains}. 
SIAM.

\bibitem{htw} 
Howell, J.S., Toundykov, D. and Webster, J.T., 2018. 
A cantilevered extensible beam in axial flow: Semigroup well-posedness and postflutter regimes. 
\emph{SIAM Journal on Mathematical Analysis}, 50(2), pp.2048--2085.

\bibitem{huang} 
Huang, L., 1995. 
Flutter of cantilevered plates in axial flow. 
\emph{Journal of Fluids and Structures}, 9(2), pp.127--147.

\bibitem{book} 
Kaltenbacher, B., Kukavica, I., Lasiecka, I., Triggiani, R., Tuffaha, A. and Webster, J.T., 2018. 
\emph{Mathematical Theory of Evolutionary Fluid-Flow Structure Interactions}. 
BirkhÃ€user.

\bibitem{piezosurvey} 
Kim, H.S., Kim, J.H. and Kim, J., 2011. 
A review of piezoelectric energy harvesting based on vibration. 
\emph{International Journal of Precision Engineering and Manufacturing}, 12, pp.1129--1141.

\bibitem{lagleug} 
Lagnese, J.E. and Leugering, G., 1991. 
Uniform stabilization of a nonlinear beam by nonlinear boundary feedback. 
\emph{Journal of Differential Equations}, 91(2), pp.355--388.

\bibitem{redbook}
Lasiecka, I. and Triggiani, R., 2000. 
\emph{Control Theory for Partial Differential Equations: Volume 1, Abstract Parabolic Systems: Continuous and Approximation Theories}. 
Cambridge University Press.

\bibitem{KJ}
Lasiecka, I. and Webster, J.T., 2014. 
Kutta-Joukowski flow conditions in flow-plate interactions: Subsonic case. 
\emph{Nonlinear Analysis: Real World Applications}, 7, pp.171--191.

\bibitem{LW} 
Lasiecka, I. and Webster, J.T., 2023. 
Flutter stabilization for an unstable, hyperbolic flow-plate interaction. 
In: \emph{Fluids Under Control}, pp.157--258. Springer International Publishing.


\bibitem{miyatake} 
Miyatake, S., 1973. 
Mixed problem for hyperbolic equation of second order. 
\emph{Journal of Mathematics of Kyoto University}, 13(3), pp.435--487.

\bibitem{miyatake2} 
Miyatake, S., 1993. 
Neumann operator for wave equation in a half space and microlocal orders of singularities along the boundary. 
\emph{Seminar on Partial Differential Equations} (Polytechnique, ``Goulaouic-Schwartz Seminar''), pp.1--6.

\bibitem{pazy} 
Pazy, A., 2012. 
\emph{Semigroups of Linear Operators and Applications to Partial Differential Equations}, Vol. 44. 
Springer Science \& Business Media.

\bibitem{sakamoto} 
Sakamoto, R., 1972. 
Mixed problems for hyperbolic equations. I. Energy inequalities. 
\emph{Matematika}, 16(1), pp.62--80.

\bibitem{savare} 
SavarÃ©, G., 1997. 
Regularity and perturbation results for mixed second order elliptic problems. 
\emph{Communications in Partial Differential Equations}, 22(5--6), pp.869--899.

\bibitem{inext2} 
Semler, C., Li, G.X. and PaÃ¯doussis, M.P., 1994. 
The non-linear equations of motion of pipes conveying fluid. 
\emph{Journal of Sound and Vibration}, 169(5), pp.577--599.

\bibitem{shubov} 
Shubov, M.A., 2006. 
Flutter phenomenon in aeroelasticity and its mathematical analysis. 
\emph{Journal of Aerospace Engineering}, 19(1), pp.1--12.

\bibitem{shubov*} 
Shubov, M.A., 2010. 
Solvability of reduced Possio integral equation in theoretical aeroelasticity. 
\emph{Advances in Differential Equations}, 15(9--10), pp.801--828.

\bibitem{tricomi} 
Tricomi, F.G., 1985. 
\emph{Integral Equations}, Vol. 5. 
Courier Corporation.

\bibitem{webster} 
Webster, J.T., 2011. 
Weak and strong solutions of a nonlinear subsonic flow-structure interaction: Semigroup approach. 
\emph{Nonlinear Analysis: Theory, Methods \& Applications}, 74(10), pp.3123--3136.

\end{thebibliography}
\end{document}